\DeclareMathAlphabet{\mathpzc}{OT1}{pzc}{m}{it}
\newcommand{\R}{\mathbb{R}}
\newcommand{\C}{\mathbb{C}}
\newcommand\Z{\mathbb{Z}}
\newcommand{\N}{\mathbb{N}}
\newcommand{\Q}{\mathbb{Q}}
\renewcommand{\S}{\mathbb{S}}
\newcommand{\Cb}{\mathbf{C}}
\newcommand{\Bb}{\mathbf{B}}
\newcommand{\Hs}{\mathscr{H}}
\newcommand{\Hb}{\mathbf{H}}
\newcommand{\Lb}{\mathbf{L}}
\newcommand{\T}{\mathrm{T}}
\newcommand{\Tb}{\mathbf{T}}
\newcommand{\Ts}{\mathscr{T}}
\newcommand{\Lev}{\mathrm{L}}
\newcommand{\jj}{\mathbf{j}}
\newcommand{\Ar}{\mathrm{A}}
\newcommand{\Ccal}{\mathcal{C}}
\newcommand{\Lcal}{\mathcal{L}}
\newcommand{\Mcal}{\mathcal{M}}
\newcommand{\Ocal}{\mathcal{O}}
\newcommand{\Pcal}{\mathcal{P}}
\newcommand{\Qcal}{\mathcal{Q}}
\newcommand{\Scal}{\mathcal{S}}
\newcommand{\Ucal}{\mathcal{U}}
\newcommand{\Vcal}{\mathcal{V}}
\newcommand{\Dis}{\mathrm{D}}
\newcommand{\Id}{\mathrm{Id}}
\newcommand{\CP}{\mathbb{P}_{\mathbb{C}}}
\newcommand{\Mod}{\mathcal{M}}
\newcommand{\UC}{\mathscr{C}}
\newcommand{\Aa}{\textrm{Area}}
\newcommand{\ee}{\mathbf{e}}
\newcommand{\gb}{\mathbf{g}}
\newcommand\Ric{{\rm Ric}}
\newcommand\la{\lambda}
\newcommand{\vv}{\mathbf{v}}
\newcommand{\vide}{\varnothing}
\newcommand{\Sig}{\Sigma}
\newcommand{\sig}{\sigma}
\newcommand{\eps}{\epsilon}
\newcommand{\ol}{\overline}
\newcommand{\lra}{\longrightarrow}
\newcommand{\ra}{\rightarrow}
\newcommand{\smin}{\setminus}
\newcommand{\DS}{\displaystyle}
\newcommand\cal{\mathcal}
\newtheorem{Theorem}{Theorem}[section]
\newtheorem{Corollary}[Theorem]{Corollary}
\newtheorem{Lemma}[Theorem]{Lemma}
\newtheorem{Proposition}[Theorem]{Proposition}
\newtheorem{Definition}[Theorem]{Definition}
\theoremstyle{remark}
\newtheorem{Remark}[Theorem]{Remark}
\begin{document}
\title[Singular K\"ahler-Einstein metrics on $\ol{\Mod}_{0,n}$ ]{Complex hyperbolic volume and intersection of boundary divisors in moduli spaces of genus zero curves}

\author{Vincent Koziarz}
\address{Univ. Bordeaux, IMB, CNRS, UMR 5251, F-33400 Talence, France}
\email[V.~Koziarz]{vincent.koziarz@math.u-bordeaux.fr}
\author{Duc-Manh Nguyen}
\email[D.-M.~Nguyen]{duc-manh.nguyen@math.u-bordeaux.fr}

\date{\today}
\begin{abstract}
We show that the complex hyperbolic metrics defined by Deligne-Mostow and Thurston on $\Mod_{0,n}$ are singular K\"ahler-Einstein metrics when $\Mod_{0,n}$ is embedded in the Deligne-Mumford-Knudsen compactification  $\ol{\Mod}_{0,n}$. As a consequence, we obtain a  formula computing the volumes of $\Mod_{0,n}$ with respect to these metrics using intersection of boundary divisors of $\ol{\Mod}_{0,n}$. In the case of rational weights, following an idea of Y.~Kawamata, we show that these metrics  actually  represent the  first Chern class of some line bundles on $\ol{\Mod}_{0,n}$, from which other formulas computing the same volumes are derived.
\end{abstract}

\maketitle

\section{Introduction}
Let $n\geq 3$ and $\Mcal_{0,n}$ be the moduli space of Riemann surfaces of genus $0$ with $n$ marked points. Let $\mu=(\mu_1,\dots,\mu_n)$ be real weights satisfying $0<\mu_s<1$ and $\sum\mu_s=2$. Following ideas of E.~Picard, P.~Deligne and G.~D.~Mostow~\cite{DeligneMostow86} constructed --- for certain rational values of the $\mu_s$'s satisfying some integrality conditions --- complex hyperbolic lattices which enable in particular to endow $\Mcal_{0,n}$ with a complex hyperbolic metric $\Omega_\mu$. The volume of the corresponding orbifolds has been computed by several authors in some special cases when $n=5$ (see e.g. \cite{Yo,Sauter,Par09, KaMo15}).

A few years later, W.~P. Thurston noticed~\cite{Thurston98}  that for any $n$-uple of real weights satisfying the two simple conditions above, one can construct naturally a  metric completion of $(\Mcal_{0,n},\Omega_\mu)$, which can be endowed with a {\em cone-manifold structure}. He observed in particular that $(\Mcal_{0,n},\Omega_\mu)$ always has finite volume (see Section~\ref{sec:proof:main} for our normalization of the metric and the volume element;  we will use equally the notation $\Omega_\mu$ for the metric and its associated K\"ahler form).
In a more recent paper~\cite{McMullen}, C.~T.~McMullen computed the volume of $(\Mcal_{0,n},\Omega_\mu)$ using a Gauss-Bonnet theorem for cone manifolds.
Our first purpose in this paper is to compute the same volume by other methods, using ideas coming from complex (algebraic) geometry  with an approach in the spirit of Chapter 17 of \cite{DeligneMostowbook}. Along the way, we will show in particular that $\Omega_\mu$ is actually a {\em singular K\"ahler-Einstein metric} on the Deligne-Mumford-Knudsen compactification $\ol{\Mod}_{0,n}$ of $\Mod_{0,n}$.

In order to state our main results, we need a few basic facts about $\ol{\Mod}_{0,n}$~(see {\em e.g} \cite{DeligneMumford, Knudsen, Keel92, ACG11}). The moduli space $\Mod_{0,n}$ has complex dimension $N:=n-3$ and its complement in the smooth variety $\ol{\Mod}_{0,n}$ is the union of finitely many divisors called {\em boundary divisors}, or {\em vital divisors}, each of which uniquely corresponds to a partition of $\{1,\dots,n\}$ into two subsets $I_0\sqcup I_1$ such that $\min\{|I_0|,|I_1|\} \geq 2$, see~\cite{Keel92} for instance. We will denote by $\Pcal$ the set of partitions satisfying this condition. For each partition $\Scal:=\{I_0,I_1\} \in \Pcal$, we denote by $D_\Scal$ the corresponding divisor in  $\ol{\Mod}_{0,n}$. Exchanging $I_0$ and $I_1$ if necessary, we will always assume that $\mu_\Scal:=\sum_{s\in I_1}\mu_s \leq 1$ (in order to lighten the notation, we do not write explicitly the dependence of the coefficients $\mu_\Scal$ on
$\mu$). 

For any $s\in\{1,\dots,n\}$, we also define the divisor class $\psi_s$ on $\ol{\Mod}_{0,n}$ associated to the pullback of the relative cotangent bundle of the universal curve by the section corresponding to the $s$-th marked point.

Finally, if $D$ is a divisor on $\ol{\Mod}_{0,n}$, $D^N$ means as usual that we take the $N$-th self-intersection of $D$.
Our main result is the
\begin{Theorem}\label{theorem:main}
Let $n\geq 4$ and $\Mcal_{0,n}$ be the moduli space of Riemann surfaces of genus $0$ with $n$ marked points. Let $\mu=(\mu_1,\dots,\mu_n)$ be real weights satisfying $0<\mu_s<1$ and $\sum\mu_s=2$. Let $D_\mu:=\sum_{\Scal\in\Pcal} \lambda_\Scal \,D_\Scal$ where
$$\lambda_\Scal=(|I_1|-1)(\mu_\Scal-1)+1.$$
Then the volume of $(\Mcal_{0,n},\Omega_\mu)$ satisfies
\begin{eqnarray}\label{eq:main:thm}
\int_{\Mod_{0,n}}\Omega_\mu^{N}=\frac{1}{(N+1)^N}\Bigl(K_{\ol{\Mod}_{0,n}}+D_\mu\Bigr)^N&=&\frac{1}{(N+1)^N}\left(\sum_\Scal \Bigl(|I_1|-1\Bigr) \biggl(\mu_\Scal-\frac{|I_1|}{N+2}\biggr) \,D_\Scal \right)^N\\
&=&\frac1{2^N}\left(-\sum_{s=1}^n\mu_s\,\psi_s+\sum_\Scal\mu_\Scal\, D_\Scal\right)^N \nonumber
\end{eqnarray}
where $\Omega_\mu$ denotes the K\"ahler form associated with the metric and $K_{\ol{\Mod}_{0,n}}$ is the canonical divisor of $\ol{\Mod}_{0,n}$.
\end{Theorem}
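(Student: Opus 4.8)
The conceptual heart of the argument is to reinterpret $\Omega_\mu$ as a \emph{singular K\"ahler--Einstein} metric on the compactification $\ol{\Mod}_{0,n}$, and to extract the volume from the resulting identity of cohomology classes. With the normalization fixed in Section~\ref{sec:proof:main}, I would first record that on the open part $\Mcal_{0,n}$ the K\"ahler form satisfies the Einstein equation
\[
\Ric(\Omega_\mu)=-(N+1)\,\Omega_\mu ,
\]
which holds because $\Omega_\mu$ is by construction a complex hyperbolic metric, i.e. locally isometric to $\mathbb{CH}^N$. The crucial point is then that $\Omega_\mu$ extends across each boundary divisor $D_\Scal$ with a controlled \emph{conical} behaviour. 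Combining the smooth Einstein equation with the distributional contributions of these cone singularities through a global Poincar\'e--Lelong argument, one passes to cohomology: since $-[\Ric(\Omega_\mu)]$ represents the canonical class $K_{\ol{\Mod}_{0,n}}$ away from the boundary, while the conical defect along $D_\Scal$ contributes a Lelong mass equal to $\lambda_\Scal$, one obtains the key identity
\[
(N+1)\,[\Omega_\mu]=K_{\ol{\Mod}_{0,n}}+D_\mu=K_{\ol{\Mod}_{0,n}}+\sum_{\Scal\in\Pcal}\lambda_\Scal\,D_\Scal
\]
in $H^2(\ol{\Mod}_{0,n};\R)$.

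The main obstacle lies precisely in this boundary analysis. One must produce an explicit local model for $\Omega_\mu$ near a generic point of $D_\Scal$, where the marked points indexed by $I_1$ collide, and verify two things: that the metric is genuinely conical (and not merely of finite volume), and that the cone angle, hence the Lelong mass, is exactly the announced $\lambda_\Scal=(|I_1|-1)(\mu_\Scal-1)+1$. For $|I_1|=2$ this reduces to the classical collision of two points, with cone angle $2\pi(1-\mu_\Scal)$ and hence Lelong mass $\mu_\Scal$, in agreement with $\lambda_\Scal=\mu_\Scal$; the genuinely delicate case is $|I_1|\geq 3$, where the dependence on the number of colliding points must be read off from the Deligne--Mostow--Thurston description of the metric. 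This local study is where the bulk of the analytic work sits, and it is the step most likely to require care.

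Granting the cohomological identity, the volume formula follows by integrating the top power of $\Omega_\mu$. The one remaining analytic subtlety is to check that no mass escapes to the boundary, namely that $\Omega_\mu$ has full Monge--Amp\`ere mass, so that the finite volume of Thurston computes the cohomological self-intersection:
\[
\int_{\Mcal_{0,n}}\Omega_\mu^{N}=[\Omega_\mu]^{N}=\frac{1}{(N+1)^N}\bigl(K_{\ol{\Mod}_{0,n}}+D_\mu\bigr)^N .
\]
The conical (as opposed to cuspidal) nature of the singularities established in the previous step is exactly what guarantees full mass, and hence this first equality.

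It remains to prove the two further equalities, which are pure intersection theory on $\ol{\Mod}_{0,n}$ and carry no further analytic content. For the middle one, I would substitute the standard expression of the canonical class in the boundary divisors, namely $K_{\ol{\Mod}_{0,n}}=\sum_\Scal\bigl[(|I_1|-2)-|I_1|(|I_1|-1)/(n-1)\bigr]\,D_\Scal$ (which one checks is symmetric under $I_0\leftrightarrow I_1$, as it must be), and add $D_\mu=\sum_\Scal\lambda_\Scal D_\Scal$; the $\mu$-independent terms then combine, using $N+2=n-1$, into $K_{\ol{\Mod}_{0,n}}+D_\mu=\sum_\Scal(|I_1|-1)\bigl(\mu_\Scal-|I_1|/(N+2)\bigr)D_\Scal$. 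For the third equality, I would exhibit a second cohomological representative of $[\Omega_\mu]$, namely $\tfrac12\bigl(-\sum_{s}\mu_s\psi_s+\sum_\Scal\mu_\Scal D_\Scal\bigr)$; this can be obtained either by computing directly the curvature of the natural Hermitian metric whose class involves the $\psi$-classes with weights $\mu_s$, or equivalently by verifying the class identity $-\sum_s\mu_s\psi_s+\sum_\Scal\mu_\Scal D_\Scal=\tfrac{2}{N+1}\bigl(K_{\ol{\Mod}_{0,n}}+D_\mu\bigr)$ from Keel's relations expressing each $\psi_s$ as a sum of boundary divisors. Raising this representative to the $N$-th power gives the last expression $\tfrac1{2^N}\bigl(-\sum_s\mu_s\psi_s+\sum_\Scal\mu_\Scal D_\Scal\bigr)^N$.
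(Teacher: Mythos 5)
Your overall strategy is the same as the paper's: interpret $\Omega_\mu$ as a singular K\"ahler--Einstein metric for the pair $(\ol{\Mod}_{0,n},D_\mu)$, deduce the class identity $(N+1)\{\Omega_\mu\}=c_1\bigl(K_{\ol{\Mod}_{0,n}}+D_\mu\bigr)$, check that no Monge--Amp\`ere mass sits on the boundary, and finish with Keel's relations (your boundary expression for the canonical class agrees with the paper's $\sum_\Scal\frac{(|I_0|-2)(|I_1|-2)-2}{n-1}D_\Scal$, and your treatment of the second and third equalities matches Section~\ref{sec:proof:main}). The genuine gap is that the entire analytic core of the argument is asserted rather than proved: you correctly identify the local model of $\Omega_\mu$ near a generic point of $D_\Scal$, and the verification that the divisorial coefficient equals $\lambda_\Scal=(|I_1|-1)(\mu_\Scal-1)+1$ for $|I_1|\geq 3$, as ``the step most likely to require care'' --- and then you do not carry it out. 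This is exactly what Sections~\ref{sec:coord:bdry}--\ref{sec:metric:coord} of the paper exist for: one builds a holomorphic trivializing section $\Phi$ of $\Lcal$ over a punctured plumbing neighborhood, using the normal form of Lemma~\ref{lm:plumb:data} to choose the plumbing coordinates, computes its Hermitian norm by Thurston's cone-adding surgery (Proposition~\ref{prop:norm:bdry:gen}), obtains the explicit potential $1-\|z^0\|^2-\sum_j|P_j(t)|^2(1+\|z^j\|^2)$ with $P_j(t)=(-1)^{L_j}\prod_{s}t_{i_s}^{\nu_{i_s}}$ (Proposition~\ref{prop:Chern:form}), and only then reads off, by an exponent count in $\Omega_\mu^N$, that the power of $|t_j|^2$ is $(|I_1^j|-1)\nu_j-1=-\lambda_{\Scal_j}$ (proof of Proposition~\ref{prop:s-KE:metric:M0n}). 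Note that this coefficient is \emph{not} a one-variable cone angle when $|I_1|\geq 3$: it mixes the transverse exponent $\nu_j=1-\mu_{\Scal_j}$ with the dimension $|I_1^j|-2$ of the stratum directions that degenerate at rate $|P_j(t)|^2$, so your two-point heuristic does not extend by analogy; without this computation the key identity, hence the whole theorem, is unsupported.

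Relatedly, your assertion that the ``conical (as opposed to cuspidal) nature'' of the singularities ``guarantees full mass'' is not an argument. The mechanism in the paper is concrete: the explicit local model shows that $\Omega_\mu$ has \emph{continuous} local potentials across $\partial\ol{\Mod}_{0,n}$ (Remark~\ref{rk:Omega:cont}), so by Lemma~\ref{zeroextension:lemma} its extension by zero is a closed positive current with the same cohomology class; continuity of the potentials then makes the Bedford--Taylor powers $\Omega_\mu^N$ well defined with $\{\Omega_\mu\}^N=\frac{1}{(N+1)^N}\bigl(K_{\ol{\Mod}_{0,n}}+D_\mu\bigr)^N$, and the Chern--Levine--Nirenberg inequality shows these products put no mass on the pluripolar boundary, giving \eqref{eq:int:volume:KE}. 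All of this again feeds on the boundary analysis you deferred, so the gap in the first paragraph propagates to this step as well.
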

In Corollary~\ref{coro:mcmullen}, we compare formula~(\ref{eq:main:thm}) with the one obtained by McMullen in~\cite{McMullen}.
There exists an algorithm to calculate the intersection numbers of divisors of the type $D_\Scal$ (see~\cite{Kontsevich-Manin} and Appendix~\ref{appendix:intersection} below), but doing the calculation by hand is rather involved. However, those computations can be carried out efficiently by a computer program by C.~Faber.

Besides providing an alternative method to compute the volume of $(\Mod_{0,n},\Omega_\mu)$, our approach also sheds light on the relation between Thurston's compactification $\ol{\Mod}_{0,n}^\mu$ of $\Mod_{0,n}$ and  $\ol{\Mod}_{0,n}$. Recall that Thurston identified $\Mod_{0,n}$ with the space of flat surfaces homeomorphic to the sphere $\S^2$ having $n$ conical singularities with cone angles given by $2\pi(1-\mu_s)$ up to rescaling. A stratum of $\ol{\Mod}^\mu_{0,n}$ consists of flat surfaces which are the limits when some clusters of singularities collapse into points. On the other hand, each stratum of $\ol{\Mod}_{0,n}$  is encoded by a tree whose vertices are labelled by the subsets in a partition of $\{1,\dots,n\}$. Every point in such a stratum represents a stable curve with several irreducible components. Among those components, there is a particular one that we call {\em $\mu$-principal component} whose definition depends on $\mu$ (see Section~\ref{sec:princ:component}).
To each stratum $S$ of $\ol{\Mod}^\mu_{0,n}$, we have a corresponding stratum $\tilde{S}$ of $\ol{\Mod}_{0,n}$ such that, for  any flat surface represented by a point in $S$, the underlying Riemann surface with punctures is isomorphic to the $\mu$-principal component of the stable curves represented by some points in $\tilde{S}$. So in some sense, one can say that $\ol{\Mod}^\mu_{0,n}$ is obtained from $\ol{\Mod}_{0,n}$ by ``contracting'' every boundary stratum to its $\mu$-principal factor.

In the literature, one can find compactifications of $\Mcal_{0,n}$ which are different from the Deligne-Mumford-Knudsen one $\ol\Mcal_{0,n}$ (see in particular the papers of B. Hasset~\cite{hassett} and D.~I.~Smyth~\cite{Smyth}). These compactifications are contractions of $\ol\Mcal_{0,n}$ and are in general singular. Actually, when compact, Thurston completions corresponding to weights $\mu$ as above are compactifications considered by Smyth, but for our purpose it is more convenient to work on the smooth model $\ol\Mcal_{0,n}$ and we will not insist on this point of view (see also Remark~\ref{rk:Th:cone:angle} below).

By construction, $\Omega_\mu$ is the curvature of a Hermitian metric on a holomorphic line bundle over $\Mod_{0,n}$. When all the weights in $\mu$ are rational, Y.~Kawamata~\cite{kaw} observed that this line bundle admits a natural extension to $\ol{\Mod}_{0,n}$. It turns out that  $\Omega_\mu$  can be considered as a representative in the sense of currents of the first Chern class of this extended line bundle. It can be shown that the latter is effective. We develop this algebro-geometric approach in Section~\ref{sec:mu:rat:ext:sect}. By constructing explicit sections and determining their zero divisor,  we provide other formulas for the volume which avoid metric considerations. Even though at first glance this approach seems to work only in the case of rational weights, by  continuity argument, our formulas are actually valid for all values of $\mu$ satisfying the hypothesis of Theorem~\ref{theorem:main}. Namely, we get the following
\begin{Theorem}\label{thm:kawa:formpart}
For each $1\leq s<s'\leq n$,  define
$$
\lambda(s,s')=\left\{
\begin{array}{l}
0\ \ {\rm if}\ \sum_{k=s}^{s'}\mu_k\leq1\ {\rm or}\ \sum_{k=s+1}^{s'-1}\mu_k\geq1,\\
\min\bigl\{\mu_s,\mu_{s'},\sum_{k=s}^{s'}\mu_k-1,1-\sum_{k=s+1}^{s'-1}\mu_k\bigr\}\ \ {\rm otherwise}
\end{array}
\right.$$
and
$$\delta_\Scal(s,s')=\left\{
\begin{array}{l}
1\ {\rm if}\ \{s,s'\}\subset I_1\\
0\ {\rm otherwise}
\end{array}
\right.
.
$$
Then the effective $\R$-divisor
$$
D_\sigma:=\sum_\Scal\sum_{1\leq s<s'\leq n} \delta_\Scal(s,s')\lambda(s,s')\, D_\Scal
$$
satisfies
\begin{equation}\label{eq:kawa:form}
\int_{\Mod_{0,n}}\Omega_\mu^{N}= \frac1{(N+1)^N}(K_{\ol{\Mod}_{0,n}}+D_\mu)^{N}= D_\sigma^N.
\end{equation}
\end{Theorem}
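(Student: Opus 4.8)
The first equality in \eqref{eq:kawa:form} is Theorem~\ref{theorem:main}, so the entire content is the second one, $\frac{1}{(N+1)^N}(K_{\ol{\Mod}_{0,n}}+D_\mu)^N=D_\sigma^N$. The plan is to deduce it from the single identity of $\R$-divisor classes
$$[D_\sigma]=\frac{1}{N+1}\bigl(K_{\ol{\Mod}_{0,n}}+D_\mu\bigr)$$
in $\mathrm{NS}(\ol{\Mod}_{0,n})_\R$, from which the equality of the $N$-th self-intersections follows at once. By the third expression in \eqref{eq:main:thm} the right-hand class equals $\frac{1}{2}\bigl(-\sum_s\mu_s\psi_s+\sum_\Scal\mu_\Scal D_\Scal\bigr)$, and by the construction of Section~\ref{sec:mu:rat:ext:sect} this is precisely $c_1(L)$ for the extended line bundle $L$ on $\ol{\Mod}_{0,n}$ whose first Chern class is represented by the current $\Omega_\mu$. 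Thus everything reduces to realizing $D_\sigma$ as the divisor of a section of $L$.

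I would first assume $\mu$ rational, so that $L$ is a genuine $\Q$-line bundle: over $\Mod_{0,n}$ its fibre is the relevant eigenline in the Hodge bundle of the cyclic cover $y^m=\prod_s(x-z_s)^{a_s}$, where $\mu_s=a_s/m$, and its natural multivalued sections are the Lauricella periods of $\prod_s(x-z_s)^{-\mu_s}\,\D x$ over arcs joining the punctures. The key step is to exhibit an explicit global section $\Phi$ of $L$, built as a weighted product over pairs, $\Phi=\prod_{1\le s<s'\le n}\Phi_{s,s'}^{\lambda(s,s')}$, where $\Phi_{s,s'}$ is the period attached to the arc from $z_s$ to $z_{s'}$ and the exponents are chosen so that $\Phi$ is single-valued and extends holomorphically across the boundary (the bookkeeping that the total exponent makes $\Phi$ a section of $L$ itself, rather than of a higher power, being part of the verification). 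I would then compute $\mathrm{div}(\Phi)$: on $\Mod_{0,n}$ each factor is invertible, so $\Phi$ has no interior zeros, while along a boundary divisor $D_\Scal$ only the factors with $\{s,s'\}\subset I_1$ degenerate, each to order $\lambda(s,s')$, the others remaining units. This gives $\mathrm{ord}_{D_\Scal}\Phi=\sum_{\{s,s'\}\subset I_1}\lambda(s,s')$, hence $\mathrm{div}(\Phi)=D_\sigma$; in particular $D_\sigma$ is effective and $[D_\sigma]=c_1(L)$, which is the class identity above. The asymmetry between $I_0$ and $I_1$ encoded by $\delta_\Scal(s,s')$ and by the convention $\mu_\Scal\le1$ reflects that it is the light cluster $I_1$ that collapses to a single point, so that only arcs internal to $I_1$ shrink.

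The main obstacle is precisely this order-of-vanishing computation, i.e. the identification of the local exponent of $\Phi_{s,s'}$ with $\lambda(s,s')$ as the cluster $I_1$ degenerates. Near such a boundary point the asymptotics are governed by a one-parameter hypergeometric degeneration whose four competing local exponents are exactly $\mu_s$, $\mu_{s'}$, $\sum_{k=s}^{s'}\mu_k-1$ and $1-\sum_{k=s+1}^{s'-1}\mu_k$; the holomorphic branch of $\Phi_{s,s'}$ vanishes to the smallest admissible order, namely their minimum, and the vanishing is genuine only when that minimum is positive, i.e. when $\sum_{k=s}^{s'}\mu_k>1$ and $\sum_{k=s+1}^{s'-1}\mu_k<1$; otherwise $\lambda(s,s')=0$. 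Matching these exponents to the geometry of the stable curve --- in particular to which marked points and sub-arcs survive on the $\mu$-principal component of Section~\ref{sec:princ:component} --- and verifying that no spurious zeros or poles occur along $\psi$-type loci, is the delicate part and must be carried out stratum by stratum.

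Finally I would remove the rationality assumption by a continuity argument. On the simplex $\{0<\mu_s<1,\ \sum_s\mu_s=2\}$ the left-hand side of \eqref{eq:kawa:form} depends continuously (indeed piecewise-polynomially) on $\mu$ through $K_{\ol{\Mod}_{0,n}}+D_\mu$, while each $\lambda(s,s')$ is a continuous, piecewise-linear function of $\mu$ (the two defining cases agree on their common boundary, where the relevant term of the minimum vanishes), so $D_\sigma^N$ is continuous as well. Since the rational weights are dense in the simplex, the identity \eqref{eq:kawa:form} established for rational $\mu$ extends to all admissible real weights, completing the proof.
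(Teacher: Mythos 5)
Your scaffolding matches the paper's: reduce to rational $\mu$, identify $\frac1{N+1}\bigl(K_{\ol{\Mod}_{0,n}}+D_\mu\bigr)$ with $c_1(\hat\Lcal)$ via the current $\Omega_\mu$ (Proposition~\ref{prop:extend:L}, Theorem~\ref{thm:kawa:form}), exhibit an explicit global section of $\hat\Lcal^{\otimes d}$ whose divisor is $d\,D_\sigma$, and conclude for real weights by continuity. But the heart of your argument --- the construction of the section --- does not work. You propose $\Phi=\prod_{s<s'}\Phi_{s,s'}^{\lambda(s,s')}$ where $\Phi_{s,s'}$ is the \emph{period} of the Lauricella form over an arc from $x_s$ to $x_{s'}$. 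Periods are defined only on the universal cover of $\Mod_{0,n}$, and the monodromy does not act on them by scalar factors: a braid or Dehn twist sends each period to a nontrivial \emph{linear combination} of periods (this is exactly the hypergeometric monodromy, which is irreducible in the relevant cases). Consequently no choice of exponents can make a product of individual periods single-valued, so your $\Phi$ is not a section of any line bundle on $\Mod_{0,n}$, let alone of $\hat\Lcal^{\otimes d}$. (Determinants of full period matrices can be single-valued; products of individual periods cannot.) Moreover, the asymptotic step you yourself flag as ``the delicate part'' --- identifying the vanishing order of each factor with $\lambda(s,s')$ --- is precisely the content that needs proof, and you defer it.

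The paper avoids both problems by using algebraic data on the fibers instead of transcendental periods: for a pair $j\neq j'$ it takes the rational $1$-form $\omega_{j,j'}=(x_j-x_{j'})\frac{dz}{(z-x_j)(z-x_{j'})}$ with residues $\pm1$, chooses multi-indices $J=(j_1,j'_1,\dots,j_d,j'_d)$ in which each $s$ occurs exactly $d\mu_s$ times and $j_i\neq j'_i$ for all $i$, and sets $\omega_J=\prod_{i=1}^d\omega_{j_i,j'_i}$. This is manifestly single-valued and ${\rm PGL}(2,\C)$-invariant, hence defines a global section $\sigma$ of $\hat\Lcal^{\otimes d}$, and its vanishing order along $D_\Scal$ is read off at a \emph{generic} point of $D_\Scal$ (codimension one suffices to compute a divisor, so no stratum-by-stratum analysis is needed) from the trivialization $\tau_m$ of Section~\ref{sec:kawa}: in the coordinates $(x,t)$ one has $\omega_J=t^{\#\{i:\,j_i,j'_i\in I_1\}}f(t)\,\tau_m$ with $f$ invertible. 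The $\min$ in the definition of $\lambda(s,s')$ then arises for purely combinatorial reasons --- it equals $\frac1d\#\{i:(j_i,j'_i)=(s,s')\}$ for the paper's interleaved choice of $J$ --- and not from an asymptotic analysis of degenerating hypergeometric integrals. To repair your proof you would have to replace your period products by such fiberwise rational sections; as written, the key object of your argument does not exist.
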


In this paper, many objects and quantities depend on the weights $\mu$. However, as we already said for the coefficients $\mu_\Scal$, this dependence will not always appear explicitly but the reader will have to keep it in mind.

\begin{Remark}\label{rk:sum:neq:1}
Whenever there exists a partition $\{I_0,I_1\} \in \Pcal$ such that $\sum_{s\in I_0}\mu_s=\sum_{s\in I_1}\mu_s=1$, the metric completion of Thurston is not compact and our method does not provide directly a formula for the volume of $(\Mcal_{0,n},\Omega_\mu)$. However,  the formulas in Theorem~\ref{theorem:main} remain valid by continuity arguments (as in~\cite{McMullen}). For these reasons, we will assume through out this paper  that the sum of the weights for indices in any subset of $\{1,\dots,n\}$ is always different from $1$.
\end{Remark}

\subsection*{Outline} The paper is organized as follows.
\begin{enumerate}
\item In Section~\ref{sec:loc:sys:on:P1} we collect the necessary background from the paper of Deligne and Mostow~\cite{DeligneMostow86}. Associated to any weight vector $\mu=(\mu_1,\dots,\mu_n) \in \R^n_{>0}$ such that $\mu_1+\dots+\mu_n=2$, we have a rank one local system $\Lb$ on the punctured sphere $\CP^1\smin\{x_1,\dots,x_n\}$ with  monodromy $\exp(2\imath\pi\mu_s)$ at $x_s$, which is equipped with a Hermitian metric. Assuming $\mu_s \not\in \Z$ for some  $s \in \{1,\dots,n\}$,  we have $\dim_\C H^1(\CP^1\smin\{x_1,\dots,x_s\},\Lb) = n-2$.
Up to a multiplicative constant, there exists a unique section $\omega$ of the bundle $\Omega^1(\Lb)$ which is holomorphic on $\CP^1\smin\{x_1,\dots,x_n\}$, and has valuation $-\mu_s$ at $x_s$. This section defines a non-zero cohomology class in $H^1(\CP^1\smin\{x_1,\dots,x_n\},\Lb)$.

One can obviously move the points $x_1,\dots,x_n$ around,  therefore $H^1(\CP^1\smin\{x_1,\dots,x_n\},\Lb)$ and $\omega$ give rise to a local system $\Hb$ of rank $n-2$ and a holomorphic line bundle $\Lcal$ on $\Mod_{0,n}$,  the fiber of $\Lcal$ over the point $m\simeq (\CP^1,\{x_1,\dots,x_n\})\in \Mod_{0,n}$ is the line generated by $\omega$ in $H^1(\CP^1\smin\{x_1,\dots,x_n\},\Lb)$.  Projectivizing $\Hb$, we get a flat $\CP^{n-3}$-bundle over $\Mod_{0,n}$, and $\Lcal$ provides us with a multivalued section $\Xi_\mu$ of this bundle.  The pullback  $\widetilde{\Xi}_\mu$ of $\Xi_\mu$ to $\widetilde{\Mod}_{0,n}$ is an \'etale mapping from $\widetilde{\Mod}_{0,n}$ to $\CP^{n-3}$.

 The Hermitian form of $\Lb$ gives rise to a Hermitian form $((.,.))$ on $H^1(\CP^1\smin\{x_1,\dots,x_n\},\Lb)$. In the case $0<\mu_s<1$ for all $s$, this Hermitian form has signature $(1,n-3)$ and $((\omega,\omega)) >0$. It follows that the section $\widetilde{\Xi}_\mu$ takes values in the ball $\Bb:=\{\langle v \rangle \in \CP^{n-3}, \ ((v,v)) >0\}\subset \CP^{n-3}$ (here we identify $H^1(\CP^1\smin\{x_1,\dots,x_n\},\Lb)$ with $\C^{n-2}$). The pullback of the canonical metric on $\Bb$ by $\widetilde{\Xi}_\mu$ provides us with a complex hyperbolic metric on $\Mod_{0,n}$, which will be denoted by $\Omega_\mu$. By definition,  $\Omega_\mu$ is also the Chern form of the Hermitian line  bundle  $(\Lcal,((.,.)))$. \\

 \item  Our goal  is to show that $\Omega_\mu$ is a singular K\"ahler-Einstein metric on $\ol{\Mod}_{0,n}$. For this purpose, we first construct trivializing holomorphic sections of $\Lcal$ in the neighborhood of every point $m\in \partial\ol{\Mod}_{0,n}$.
 In Section~\ref{sec:coord:bdry}, we recall the construction of local coordinates of $\ol\Mcal_{0,n}$ near $m$  by  plumbing families.
 In Section~\ref{sec:sect:L:near:bdry}, we consider the case where $m$ is contained in a stratum of codimension one in $\ol{\Mod}_{0,n}$, which means that $m$ represents a stable curve having two genus zero components, denoted by $C^0$ and $C^1$, joined at a node. In each component, we assign a positive weight to the point corresponding to the node of $m$ such that the weights associated to all the marked points add up to $2$. We have on $C^i$  a rank one local system $\Lb_i$  and  a  section $\omega_i$ of $\Omega^1(\Lb_i)$ in the same way as we had $\Lb$ and $\omega$ above.
 The sections $\omega_0$ and $\omega_1$ will be used as data for the construction of a plumbing family representing a neighborhood $\Ucal$ of $m$ in $\ol{\Mod}_{0,n}$.
 As a by-product, we get a holomorphic non-vanishing section $\Phi$ of $\Lcal$ in $\Ucal\cap\Mod_{0,n}$.
 In Section~\ref{sec:sect:L:bdry:gen}, we generalize this construction to the case where $m$ is contained in a stratum of codimension $r$ with $r>1$.\\

\item Section~\ref{sec:metric:coord} is devoted to the proof of a formula for the Hermitian norm of the section $\Phi$ (see Proposition~\ref{prop:norm:bdry:gen}). The idea of the proof is to use the flat metric approach of Thurston. We start by relating the  point of views of Deligne-Mostow and Thurston. Each holomorphic section of $\Omega^1(\Lb)$ on $\CP^1\smin\{x_1,\dots,x_n\}$ with valuation $-\mu_s$ at $x_s$ defines a flat metric on $\CP^1$ with cone singularities at $x_1,\dots,x_n$. Its Hermitian norm  with respect to $((.,.))$ is precisely the area of this flat surface.
In~\cite{Thurston98}, Thurston introduced a method to compute this area by performing some  surgeries on the flat surface, and obtained in particular an alternative proof that the signature of $((.,.))$ is $(1,n-3)$. We will use the same method to compute the Hermitian norm of  $\omega'=\Phi(m')$, where $\Phi$ is the section of $\Lcal$ mentioned above and  $m' \in \Ucal\cap\Mod_{0,n}$.
As a direct consequence, we obtain a rather explicit formula for the metric $\Omega_\mu$ near the boundary of $\ol{\Mod}_{0,n}$ (see Proposition~\ref{prop:Chern:form}).\\

 \item In Section~\ref{section:SKE}, we recall some basic facts about singular K\"ahler-Einstein metrics. It follows immediately from Proposition~\ref{prop:Chern:form} that $\Omega_\mu$  is a singular K\"ahler-Einstein metric associated with the pair $(\ol{\Mod}_{0,n},D_\mu)$. Theorem~\ref{theorem:main} is then a straightforward consequence of this fact. Comparing $\Omega_\mu$ with the complex hyperbolic metric considered by McMullen in \cite{McMullen}, we get Corollary~\ref{coro:mcmullen}.\\

\item In Section~\ref{sec:mu:rat:ext:sect}, following an idea of Kawamata~\cite{kaw}, we construct an extension  $\hat\Lcal$ of $\Lcal$ to $\ol{\Mod}_{0,n}$ in the case when all weights $\mu_s$ are rational. This extension is the pushforward of a rank one locally free sheaf on the universal curve $\ol{\UC}_{0,n}$. By construction, $\Phi$ extends naturally to a trivializing holomorphic section of $\hat{\Lcal}$ on $\Ucal$,  and $\Omega_\mu$ is a representative (in the sense of currents) of the first Chern class of $\hat{\Lcal}$.
This leads to an alternative method to compute the volume of $\Mod_{0,n}$ with respect to $\Omega_\mu$ by using sections of $\hat{\Lcal}$ (see Theorem~\ref{thm:kawa:form}). Simplifying  a construction by Kawamata,  we construct some explicit holomorphic global sections of $\hat{\Lcal}$, for which one can easily determine the zero divisor. By the continuity of the volume with respect to $\mu$ (which can be derived from Theorem~\ref{theorem:main}), we obtain Theorem~\ref{thm:kawa:formpart}. This approach also allows us to calculate $c_1(\hat\Lcal)$ by the Grothendieck-Riemann-Roch formula and to recover formula~\eqref{eq:main:thm}.\\

\item In the appendix we explain an algorithm computing the intersection numbers of boundary divisors, which is necessary if one wants to compute the volumes explicitly.  We then  give the explicit results for $\Mod_{0,5}$ and a special case for $\Mod_{0,6}$ with the aim to help interested readers to see how concrete computations can be carried out.
\end{enumerate}

\subsection*{Acknowledgments:} We thank S.~Boucksom, J.-P.~Demailly and P.~Eyssidieux for very useful conversations about positive currents and singular K\"ahler-Einstein metrics.
We are very indebted to D.~Zvonkine for the helpful and enlightening discussions.

We would also like to thank C.~Faber who shared with us his program computing intersection numbers in $\ol{\Mod}_{g,n}$, and L. Pirio for useful comments on an earlier version of this paper.

\section{Background on rank one local systems on the punctured sphere.}\label{sec:loc:sys:on:P1}
In this section, we summarize the settings and some results in \cite[Sec. 2,3]{DeligneMostow86} relevant to our purpose.
\subsection{Cohomology of a rank one local system on the punctured sphere}
Let $n$ be a positive integer such that $n\geq 3$.  Let us fix the following data
\begin{itemize}
 \item[.] $\Sig=(x_1,x_2,\dots,x_n)$ is a $n$-uple of distinct points on the sphere $\S^2\simeq \CP^1$.
 \item[.] $\mu=(\mu_1,\dots,\mu_n)$ is a $n$-uple of positive real numbers such that
 $$
 \mu_1+\dots+\mu_n=2.
 $$
 \item[.] $\alpha_i=\exp(2\pi\imath\mu_i)$, $i=1,\dots,n$.
 \item[.] $\Lb$ is a complex rank one local system on $\CP^1\smin \Sig$ with monodromy around $x_i$ given by $\alpha_i$. Note that up to isomorphism $\Lb$ is unique.
\end{itemize}

Using the $C^\infty$-de Rham description, we can identify  $H^\bullet(\CP^1\smin\Sig, \Lb)$  with the cohomology of the de Rham complex of  $\Lb$-valued $C^\infty$ differential forms on $\CP^1\smin\Sig$, and $H^\bullet_c(\CP^1\smin\Sig, \Lb)$ with the cohomology of the subcomplex of compactly supported forms.

Let $\Lb^\vee$ be the dual local system of $\Lb$. This is the local system with monodromy $\alpha_i^{-1}$ around $x_i$. The Poincar\'e duality pairing by integration on $\CP^1\smin\Sig$, that is
$$
\begin{array}{ccc}
 H^i(\CP^1\smin\Sig,\Lb)\otimes H_c^{2-i}(\CP^1\smin\Sig, \Lb^\vee) & \lra & \C \\
 (\alpha,\beta) & \mapsto & \int_{\CP^1\smin\Sig}\alpha\wedge\beta\\
\end{array}
$$
is then a perfect pairing.

\begin{Proposition}[Deligne-Mostow]~\label{prop:dim:cohom}
 If one of the $\alpha_s, \, s\in \{1,\dots,n\}$ is not $1$, then $H^i(\CP^1\smin\Sig,\Lb)$ and $H^i_c(\CP^1\smin\Sig,\Lb)$ vanish for $i\neq 1$, and
 $$
 \dim H^1(\CP^1\smin\Sig,\Lb)=\dim H^1_c(\CP^1\smin\Sig,\Lb) = n-2.
 $$
\end{Proposition}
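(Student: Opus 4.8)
The plan is to reduce everything to the group cohomology of $\pi_1(\CP^1\smin\Sig)$, which is free, and then to read off the compactly supported groups from the perfect Poincar\'e pairing recalled just above. First I would record the topological type: $\CP^1\smin\Sig$ is an open (noncompact) surface with $n$ punctures, homotopy equivalent to a wedge of $n-1$ circles, hence to a one-dimensional CW complex, and its fundamental group is the free group $F_{n-1}$ on $n-1$ generators (the loops $\g_1,\dots,\g_n$ encircling the punctures, subject to the single relation $\g_1\cdots\g_n=1$). Since cohomology with coefficients in a local system is a homotopy invariant, $H^i(\CP^1\smin\Sig,\Lb)$ is computed as the group cohomology $H^i(F_{n-1},V)$, where $V\cong\C$ is the one-dimensional representation on which $\g_s$ acts by $\alpha_s$. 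Because a free group has cohomological dimension $1$, this yields at once $H^i(\CP^1\smin\Sig,\Lb)=0$ for every $i\geq 2$.

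Next I would dispose of $H^0$ and pin down $H^1$. A class in $H^0(\CP^1\smin\Sig,\Lb)=V^{F_{n-1}}$ is a vector fixed by every $\g_s$; the hypothesis that some $\alpha_s\neq 1$ forces this invariant line to be $0$, so $H^0(\CP^1\smin\Sig,\Lb)=0$. With $H^0$ and all $H^{\geq 2}$ vanishing, the single surviving dimension is determined by the Euler characteristic: for a rank one local system this equals $1\cdot\chi(\CP^1\smin\Sig)=2-n$, so $-\dim H^1(\CP^1\smin\Sig,\Lb)=2-n$, that is $\dim H^1(\CP^1\smin\Sig,\Lb)=n-2$.

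Finally I would obtain the compactly supported statements from the perfect pairing $H^i(\CP^1\smin\Sig,\Lb)\otimes H^{2-i}_c(\CP^1\smin\Sig,\Lb^\vee)\to\C$ stated above, which gives $H^i_c(\CP^1\smin\Sig,\Lb)\cong H^{2-i}(\CP^1\smin\Sig,\Lb^\vee)^\ast$. The dual local system $\Lb^\vee$ has monodromy $\alpha_s^{-1}$ around $x_s$, and since $\alpha_s\neq 1$ forces $\alpha_s^{-1}\neq 1$, the two previous paragraphs apply verbatim to $\Lb^\vee$: its cohomology vanishes outside degree $1$ and is $(n-2)$-dimensional there. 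Dualizing gives $H^i_c(\CP^1\smin\Sig,\Lb)=0$ for $i\neq 1$ and $\dim H^1_c(\CP^1\smin\Sig,\Lb)=n-2$, as claimed.

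The only point that is not purely formal --- the main obstacle, though it is entirely standard --- is the identification of the Euler characteristic of the rank one local system with the topological Euler characteristic $2-n$. I would justify it on a finite model of $\CP^1\smin\Sig$ (a finite triangulation, or equivalently the wedge of $n-1$ circles), over which $\Lb$ is trivialized cell by cell: the $\Lb$-twisted cellular cochain complex then has, in each degree, the same rank as the ordinary cellular cochain complex, so their Euler characteristics coincide. Everything else follows mechanically once the homotopy type $F_{n-1}$ and the duality pairing are in hand.
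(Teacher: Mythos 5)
Your proof is correct. The paper itself gives no proof of this proposition (it is quoted as background from Deligne--Mostow \cite{DeligneMostow86}), and your argument coincides with the standard one used there: vanishing of $H^0(\CP^1\smin\Sig,\Lb)$ because some $\alpha_s\neq 1$ kills invariants, vanishing above degree $1$ because the punctured sphere is homotopy equivalent to a wedge of circles (equivalently, cohomological dimension one of the free group $F_{n-1}$), the Euler-characteristic count $\chi(\CP^1\smin\Sig,\Lb)=2-n$ forcing $\dim H^1=n-2$, and the stated Poincar\'e duality pairing applied to $\Lb^\vee$ to transfer all statements to compactly supported cohomology.
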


There are several ways to describe the homology and cohomology of $\Lb$ and $\Lb^\vee$. For instance, one can use a triangulation $\Ts$ of $\CP^1\smin\Sig$ to construct chain complexes giving $H^\bullet(\CP^1\smin\Sig,\Lb)$ and $H_\bullet(\CP^1\smin\Sig,\Lb)$ as follows: an  $i$-chain with coefficients in $\Lb$ is a formal sum $\sum e_\sig\cdot\sig$, where $\sig$ is an $i$-simplex of the triangulation, and $e_\sig$ is a horizontal section of the restriction of $\Lb$ to $\sig$. An $\Lb$-valued $i$-cochain associates to each $i$-simplex $\sig$ of the triangulation a horizontal section of $\Lb$ over $\sig$. Note that the complex of $\Lb$-valued cochains is dual to the complex of chains with coefficients in $\Lb^\vee$.

The cohomology with compact support $H_c^\bullet(\CP^1\smin\Sig,\Lb)$ is also the cohomology of the complex of $\Lb$-valued cochains compactly supported on $\Ts$. Its dual complex is the complex of locally finite chains with coefficients in  $\Lb^\vee$, the  homology of which will be denoted by $H^{\rm lf}_\bullet(\CP^1\smin\Sig,\Lb^\vee)$.

One can also use currents to define $H^\bullet(\CP^1\smin\Sig,\Lb)$. For any chain $C$  with  coefficients in $\Lb^\vee$, there exists a unique $\Lb^\vee$-valued current $(C)$ such that
$$
\int_C\omega =\int_{\CP^1\smin\Sig}(C)\wedge\omega
$$

\noindent for all  $\Lb$-valued $C^\infty$ form $\omega$. The map $C \mapsto (C)$ provides the isomorphisms $H_i(\CP^1\smin\Sig, \Lb^\vee) \simeq H_c^{2-i}(\CP^1\smin\Sig,\Lb^\vee)$ and $H^{\rm lf}_i(\CP^1\smin\Sig,\Lb^\vee)\simeq H^{2-i}(\CP^1\smin\Sig,\Lb^\vee)$.

If $\beta$ is a rectifiable proper map from an open, semi-open, or closed  interval $I$ to $\CP^1\smin\Sig$, and $e \in H^0(I,\beta^*\Lb^\vee)$, we let $(e\cdot\beta)$ be the $\Lb^\vee$-valued current for which
$$
\int (e\cdot\beta)\wedge \omega=\int_I\langle e,\beta^*\omega \rangle.
$$

\noindent If $\beta: [0,1] \ra \CP^1$ maps $0$ and $1$ to $\Sig$ and $(0,1)$ into $\CP^1\smin\Sig$, then for any $e\in H^0((0,1),\beta^*\Lb^\vee)$, $e\cdot\beta$ is a cycle and hence defines an homology class in $H_1^{\rm lf}(\CP^1\smin\Sig,\Lb^\vee)\simeq H^1(\CP^1\smin\Sig,\Lb^\vee)$.

Let us fix a partition of $\Sig$ into two subsets $\Sig_0$ and $\Sig_1$. Let $\T_0, \T_1$ be two trees (graphs with no cycles) where the number of vertices of $\T_i$  is $|\Sig_i|$, and $\beta: \T_0\sqcup\T_1 \ra \CP^1$ be an embedding such that the vertex set of $\T_i$ is mapped to $\Sig_i$. We choose for any open edge $a$ of $\T_0\sqcup\T_1$ an orientation, and a non vanishing section $e(a) \in H^0(a,\beta^*\Lb^\vee)$. For each edge $a$, $e(a)\cdot\beta_{|a}$ is then a locally finite cycle on $\CP^1\smin\Sig$, with coefficients in $\Lb^\vee$. Let $I_0\sqcup I_1$ be the partition of $\{1,\dots,n\}$ corresponding to the partition $\Sig=\Sig_0\sqcup\Sig_1$.

\begin{Proposition}[\cite{DeligneMostow86}, Prop. 2.5.1]\label{prop:base:hom}
 If $\prod_{i\in I_0} \alpha_i \neq 0$, then the family
 $$
 \{e(a)\cdot\beta_{|a}, a \text{ is an edge of } \T_0\sqcup\T_1\}
 $$

 \noindent is a basis of $H_1^{\rm lf}(\CP^1\smin\Sig,\Lb^\vee)$.
\end{Proposition}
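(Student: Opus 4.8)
The plan is to compute $H_1^{\rm lf}(\CP^1\smin\Sig,\Lb^\vee)$ cellularly and to realize the proposed family as all but one of the edges of a spanning tree of $\CP^1$, the missing edge being controlled by the hypothesis. First I would count: since $\T_i$ has $|\Sig_i|=|I_i|$ vertices it has $|I_i|-1$ edges, so the proposed family has $(|I_0|-1)+(|I_1|-1)=n-2$ members. By Proposition~\ref{prop:dim:cohom} applied to $\Lb^\vee$ (whose monodromy $\alpha_i^{-1}$ is nontrivial for at least one $i\in I_0$, as $\prod_{i\in I_0}\alpha_i\neq1$), together with the isomorphism $H_1^{\rm lf}(\CP^1\smin\Sig,\Lb^\vee)\simeq H^1(\CP^1\smin\Sig,\Lb^\vee)$ recalled above, this space has dimension $n-2$. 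Hence it suffices to prove that the family spans (equivalently, that it is linearly independent).

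Next I would enlarge the configuration to a cellular model of $\CP^1$. Choose one extra embedded edge $\eps$ joining a vertex of $\beta(\T_0)$ to a vertex of $\beta(\T_1)$ and meeting $\beta(\T_0\sqcup\T_1)$ only at its endpoints, and extend $\beta$ and the choices of orientation and nonvanishing section over $\eps$. Then $T':=\beta(\T_0)\cup\eps\cup\beta(\T_1)$ is an embedded tree with $n$ vertices (the points of $\Sig$) and $n-1$ edges, and its complement $F:=\CP^1\smin T'$ is a single open $2$-cell. This equips $\CP^1$ with a CW-structure whose $0$-skeleton is $\Sig$, whose $1$-cells are the $n-2$ edges of $\T_0\sqcup\T_1$ together with $\eps$, and whose only $2$-cell is $F$. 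The associated locally finite cellular chain complex computing $H_\bullet^{\rm lf}(\CP^1\smin\Sig,\Lb^\vee)$ then reads
\[
0\lra \C\cdot F \stackrel{\partial_2}{\lra} \bigoplus_{a}\C\cdot\big(e(a)\cdot\beta_{|a}\big) \stackrel{\partial_1}{\lra} 0,
\]
the bottom term vanishing because every vertex lies in $\Sig$ (the sum runs over the $n-1$ edges $a$ of $T'$). Consequently $H_1^{\rm lf}(\CP^1\smin\Sig,\Lb^\vee)=\big(\bigoplus_a\C\cdot(e(a)\cdot\beta_{|a})\big)/(\C\cdot\partial_2 F)$, and comparison with the dimension $n-2$ forces $\partial_2 F\neq0$, so that $\C\cdot\partial_2 F$ is a line.

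Then I would argue that the $n-2$ classes $e(a)\cdot\beta_{|a}$ coming from the edges of $\T_0\sqcup\T_1$ form a basis of this quotient if and only if $\partial_2 F$ has a nonzero component along $\eps$: the only relations in the quotient are the scalar multiples of $\partial_2 F$, so the tree edges remain independent exactly when $\partial_2 F\notin\bigoplus_{a\neq\eps}\C\cdot(e(a)\cdot\beta_{|a})$. It thus remains to compute the $\eps$-component of $\partial_2 F$. The boundary of the disk $F$ is the boundary of a regular neighborhood (ribbon) of the tree $T'$, which runs along each edge twice, once on each side. For the edge $\eps$, the portion of $\partial F$ between its two traversals encircles exactly one of the two clusters, say $\Sig_0$, so transporting the chosen flat section of $\Lb^\vee$ along that portion multiplies it by $\prod_{i\in I_0}\alpha_i^{-1}$. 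As the two traversals of $\eps$ enter $\partial_2 F$ with opposite orientations, the $\eps$-component equals $\kappa\,(1-\prod_{i\in I_0}\alpha_i^{-1})$ for a nonzero scalar $\kappa$ coming from $e(\eps)$. This is nonzero precisely when $\prod_{i\in I_0}\alpha_i\neq1$, which is guaranteed by our hypothesis, and the proposition follows.

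The hard part will be the twisted boundary computation of the last paragraph: one must verify that, walking around the ribbon of $T'$, the monodromy accumulated between the two passages along $\eps$ is exactly that of a loop enclosing the cluster $\Sig_0$ (neither a larger nor a smaller subset), and keep careful track of the orientation signs so that the two contributions do not cancel unless $\prod_{i\in I_0}\alpha_i=1$. Everything else is bookkeeping with the cellular complex and the dimension count from Proposition~\ref{prop:dim:cohom}. An alternative, dual route would pair the classes $e(a)\cdot\beta_{|a}$ against suitable twisted loops through the perfect intersection pairing $H_1^{\rm lf}(\CP^1\smin\Sig,\Lb^\vee)\times H_1(\CP^1\smin\Sig,\Lb)\lra\C$ and show that the resulting matrix is nondegenerate; the cellular approach above seems more transparent and isolates the role of the hypothesis cleanly.
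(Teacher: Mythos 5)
Your proof is correct, and there is in fact no in-paper proof to compare it with: the paper quotes this statement from \cite{DeligneMostow86} without argument (note also that the hypothesis ``$\prod_{i\in I_0}\alpha_i\neq 0$'' is a misprint for ``$\neq 1$'', which is how you correctly use it). Your cellular computation is sound: $T'=\beta(\T_0)\cup\eps\cup\beta(\T_1)$ is an embedded tree, its complement $F$ in $\CP^1$ is a single open $2$-cell, the locally finite chain complex of $\CP^1\smin\Sig$ attached to this CW structure has no $0$-chains, and the whole statement reduces to your twisted boundary computation, whose outcome $\kappa\bigl(1-\prod_{i\in I_0}\alpha_i^{-1}\bigr)$ (or $\kappa\bigl(1-\prod_{i\in I_0}\alpha_i\bigr)$, depending on which of the two complementary arcs of $\partial F$ you follow --- the two products are mutually inverse since the total monodromy on the sphere is trivial) vanishes exactly when $\prod_{i\in I_0}\alpha_i=1$. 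The items you defer to bookkeeping are genuinely standard: the arc $\eps$ exists because the complement of two disjoint embedded trees in $\S^2$ is connected (Alexander duality), embedded graphs in surfaces are tame, and cellular locally finite chains do compute $H_1^{\rm lf}$. For comparison, the argument in \cite{DeligneMostow86} is the long-exact-sequence version of the same idea: the complement $\CP^1\smin\bigl(\beta(\T_0)\cup\beta(\T_1)\bigr)$ is an open annulus whose core circle separates $\Sig_0$ from $\Sig_1$, so $\Lb^\vee$ restricted to it has nontrivial monodromy and hence vanishing locally finite homology; the exact sequence associated with the closed subset formed by the open edges then identifies $\bigoplus_a \C\cdot\bigl(e(a)\cdot\beta_{|a}\bigr)$ with $H_1^{\rm lf}(\CP^1\smin\Sig,\Lb^\vee)$. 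Your extra edge $\eps$ cuts that annulus into the disk $F$, and the nonvanishing of the $\eps$-component of $\partial_2F$ is precisely the chain-level shadow of that vanishing. One small economy you could add: once the $\eps$-component is known to be nonzero, you no longer need Proposition~\ref{prop:dim:cohom} nor the duality $H_1^{\rm lf}\simeq H^1$; indeed $\dim C_1=n-1$ and $\mathrm{im}\,\partial_2$ is then a line not contained in the span of the tree edges, which yields simultaneously $\dim H_1^{\rm lf}=n-2$ and the basis property.
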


\subsection{Sheaf cohomology}
Another way to compute the cohomology of $\CP^1\smin\Sig$ with coefficients in  $\Lb$ is to use the sheaf cohomology. For this purpose, we will identify $\Lb$ with its sheaf of locally constant sections. Let $j : \CP^1\smin\Sig \ra \CP^1$ be the natural inclusion, and let $j_{!}\Lb$ be the extension of $\Lb$ by $0$ to $\CP^1$.  In this setting, $H_c^\bullet(\CP^1\smin\Sig,\Lb)$ is the cohomology on $\CP^1$ with coefficients in $j_{!}\Lb$. It is by definition, the hypercohomology on $\CP^1$ of any complex of sheaves $K^\bullet$ with $\Hs^0(K^\bullet)=j_{!}\Lb$, and $\Hs^i(K^\bullet)=0$, for $i\neq 0$.  On the other hand, if $\Lb^\bullet$ is a resolution of $\Lb$, whose components are acyclic for $j_*$ (that is $R^qj_*\Lb^k=0$ for $q >0$), then $H^\bullet(\CP^1\smin\Sig,\Lb)$ is the hypercohomology on $\CP^1$ of $j_*\Lb$. We have the 

\begin{Proposition}[\cite{DeligneMostow86}, Prop. 2.6.1]\label{prop:isom:cohom}
 If $\alpha_i \neq 1$ for all $i \in \{1,\dots,n\}$, then $H^\bullet_c(\CP^1\smin\Sig,\Lb) \simeq H^\bullet(\CP^1\smin\Sig,\Lb)$.
\end{Proposition}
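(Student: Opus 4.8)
The plan is to prove the statement at the level of sheaves on $\CP^1$, where the two cohomology theories are realized as hypercohomologies of two complexes related by the canonical ``forget supports'' morphism. Write $j:\CP^1\smin\Sig\ra\CP^1$ for the inclusion as above. By the discussion preceding the statement, $H^\bullet_c(\CP^1\smin\Sig,\Lb)=\Hy^\bullet(\CP^1,j_!\Lb)$, while $H^\bullet(\CP^1\smin\Sig,\Lb)=\Hy^\bullet(\CP^1,Rj_*\Lb)$, the latter being computed by any $j_*$-acyclic resolution of $\Lb$. The canonical inclusion of sheaves $j_!\Lb\ra j_*\Lb$, followed by $j_*\Lb\ra Rj_*\Lb$, induces exactly the natural map $H^\bullet_c\ra H^\bullet$, so it suffices to show that $j_!\Lb\ra Rj_*\Lb$ is a quasi-isomorphism. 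As this is a local question on $\CP^1$, and both complexes restrict to $\Lb$ over $\CP^1\smin\Sig$, the whole problem is concentrated at the points of $\Sig$.

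First I would compare stalks at a point $x_i\in\Sig$. The stalk $(j_!\Lb)_{x_i}$ is $0$ by definition of the extension by zero. The stalk of $R^qj_*\Lb$ at $x_i$ is the direct limit $\varinjlim_{V\ni x_i}H^q(V\smin\{x_i\},\Lb)$ over small neighbourhoods, that is, the cohomology $H^q(D^*,\Lb)$ of a small punctured disk $D^*$ around $x_i$. Since $D^*$ retracts onto a circle $\S^1$ carrying the rank one local system with monodromy $\alpha_i$, this reduces to the computation $H^0(\S^1,\Lb)=\ker(\alpha_i-1)$ and $H^1(\S^1,\Lb)=\mathrm{coker}(\alpha_i-1)$, both of which vanish precisely because $\alpha_i\neq1$. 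Thus $(R^qj_*\Lb)_{x_i}=0$ for all $q$; in particular $R^qj_*\Lb=0$ for $q>0$, and the injection $j_!\Lb\ra j_*\Lb$ is an isomorphism of sheaves, its cokernel being supported on $\Sig$ with vanishing stalks there.

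Combining these facts, $Rj_*\Lb$ is quasi-isomorphic to the sheaf $j_*\Lb$ placed in degree $0$, which in turn coincides with $j_!\Lb$. Hence $j_!\Lb\ra Rj_*\Lb$ is a quasi-isomorphism, and taking hypercohomology on $\CP^1$ yields the desired isomorphism $H^\bullet_c(\CP^1\smin\Sig,\Lb)\simeq H^\bullet(\CP^1\smin\Sig,\Lb)$, realized by the canonical map. The only genuine input, and the sole place where the hypothesis $\alpha_i\neq1$ enters, is the vanishing of the local cohomology of a punctured disk with nontrivial monodromy; this circle computation is the crux of the argument, everything else being formal sheaf theory. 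One could alternatively deduce the same conclusion from the long exact sequence comparing $H^\bullet_c$ and $H^\bullet$ through the boundary contributions, each of which is governed by the same punctured-disk cohomology.
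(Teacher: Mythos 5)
Your proof is correct. The paper itself does not prove this statement but quotes it as Proposition 2.6.1 of Deligne--Mostow, and your argument is exactly the standard one given there: reduce to showing $j_!\Lb \ra Rj_*\Lb$ is a quasi-isomorphism, which comes down to the vanishing of the stalks $(R^qj_*\Lb)_{x_i} \simeq H^q(\S^1,\Lb)$ when the monodromy $\alpha_i$ is nontrivial. So you have taken essentially the same route as the cited source, with no gaps.
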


The holomorphic $\Lb$-valued de Rham complex $\Omega^\bullet(\Lb): \Ocal(\Lb) \ra \Omega^1(\Lb)$ is a resolution of $\Lb$ on $\CP^1\smin\Sig$. Hence, we can interpret $H^\bullet(\CP^1\smin\Sig,\Lb)$  as the hypercohomology on $\CP^1\smin\Sig$ of $\Omega^\bullet(\Lb)$. Since $H^q(\CP^1\smin\Sig,\Omega^p(\Lb))=0$, for $q>0$ (because $\CP^1\smin\Sig$ is  Stein), this gives
$$
H^\bullet(\CP^1\smin\Sig,\Lb)=H^\bullet \Gamma(\CP^1\smin\Sig,\Omega^\bullet(\Lb)).
$$
On the other hand, since we have $R^qj_*\Omega^p(\Lb)=0$ for $q >0$, it follows that

$$
H^\bullet(\CP^1\smin\Sig,\Lb)=\mathbb{H}^\bullet(\CP^1,j_*\Omega^\bullet(\Lb)).
$$
\subsection{The de Rham meromorphic description of the cohomology of $\Lb$}
We will describe a section of $\Ocal(\Lb)$ on an open set $U\subset \CP^1\smin \Sig$ as the product of a multivalued function and a multivalued section of $\Lb$. Those objects are defined as follows: $U$ is provided with a base point $o$,  a multivalued section of a sheaf $\mathscr{F}$ on $U$ is actually a section of the pullback of  $\mathscr{F}$ on the universal cover $(\hat{U},\hat{o})$ of $(U,o)$.
A section of $\Lb$ at $o$ extends to a unique horizontal  multivalued section. A multivalued section of $\Ocal$ is uniquely determined by its germ at $o$.

Fix an $x_s\in \Sig$, and let $z$ be a local coordinate which identifies a neighborhood of $x_s$ with a disc $\Dis$ in $\C$ centered at $z(x_s)=0$. Let $\Dis^*=\Dis\smin\{0\}$.  If the monodromy of $\Lb$ around $x_s$ is $\alpha_s=\exp(2\pi\imath\mu_s)$, then the monodromy of $z^{-\mu_s}$ is the inverse of that of a horizontal section of $\Lb$. Therefore, any section of $\Ocal(\Lb)$ (resp. $\Omega^1(\Lb)$) on $\Dis^*$ can be written as $u=z^{-\mu_s}\cdot e\cdot f$ (resp. $u=z^{-\mu_s}\cdot e\cdot fdz$), where $e$ is a non-zero (horizontal) multivalued section of $\Lb$, and $f$ is a holomorphic function on $\Dis^*$.  We define $u$ to be {\em meromorphic} at $x_s$ if $f$ is, and define its valuation at $x_s$ to be
$$
v_{x_s}(u)=v_{x_s}(f)-\mu_s.
$$
\noindent Note that these definitions are independent of the choice of the local coordinate.

Let us write $j_*^m\Omega^\bullet(\Lb)$ for the sheaf complex consisting of meromorphic forms in $\Omega^\bullet(\Lb)$. The inclusion of $j_*^m\Omega^\bullet(\Lb)$ into $j_*\Omega^\bullet(\Lb)$ induces an isomorphism on the cohomology sheaves. This implies
$$
\mathbb{H}^\bullet(\CP^1,j_*^m\Omega^\bullet(\Lb)) \simeq \mathbb{H}^\bullet(\CP^1, j_*\Omega^\bullet(\Lb))=H^\bullet(\CP^1\smin\Sig,\Lb).
$$

\noindent Since we have $H^q(\CP^1, j_*^m\Omega^p(\Lb))=0$ for $q>0$, $\mathbb{H}^\bullet(\CP^1,j_*^m\Omega^\bullet(\Lb))$ is simply $H^\bullet\Gamma(\CP^1,j_*^m\Omega^\bullet(\Lb))$, that is the cohomology of the complex of $\Lb$-valued forms holomorphic on $\CP^1\smin\Sig$ and meromorphic at $\Sig$. To sum up, we have
$$
H^\bullet(\CP^1\smin\Sig, \Lb)\simeq H^\bullet\Gamma(\CP^1,j_*^m\Omega^\bullet(\Lb)).
$$

\begin{Proposition}[\cite{DeligneMostow86}, Cor. 2.12]\label{prop:mer:form:uniq}
There is, up to a constant factor, a unique non-zero $\omega \in \Gamma(\CP^1,j_*^m\Omega^1(\Lb))$ whose valuation at $x_s$ is at least $-\mu_s$. Actually, we have $v_{x_s}(\omega)=-\mu_s$, and $\omega$ is invertible on $\CP^1\smin\Sig$. If $\infty \not\in\Sig$, then, up to a constant factor, $\omega=e\cdot \prod_{x_s\in \Sig} (z-x_s)^{-\mu_s}dz$, and if  $\infty \in \Sig$, then $\omega=e\cdot \prod_{x_s\neq \infty}(z-x_s)^{-\mu_s}dz$.
\end{Proposition}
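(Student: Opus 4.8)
The plan is to establish existence by writing the section $\omega$ down explicitly, and then to deduce uniqueness from the fact that this explicit $\omega$ is nowhere vanishing. Assume first that $\infty\notin\Sig$. I would fix a non-zero horizontal multivalued section $e$ of $\Lb$ on $\CP^1\smin\Sig$ and set
$$
\omega = e\cdot\prod_{x_s\in\Sig}(z-x_s)^{-\mu_s}\,dz.
$$
The key observation is single-valuedness: going once around $x_s$, the horizontal section $e$ is multiplied by the monodromy $\alpha_s=\exp(2\pi\imath\mu_s)$, while the multivalued factor $(z-x_s)^{-\mu_s}$ is multiplied by $\exp(-2\pi\imath\mu_s)=\alpha_s^{-1}$; the two cancel, so $\omega$ is a genuine section of $\Omega^1(\Lb)$ on $\CP^1\smin\Sig$. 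Since $e$ is nowhere zero and each factor $(z-x_s)^{-\mu_s}$ is holomorphic and non-vanishing away from $x_s$, the form $\omega$ is invertible on $\CP^1\smin\Sig$.

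Next I would read off the valuations. Near $x_s$, writing $\omega=(z-x_s)^{-\mu_s}\cdot e\cdot f\,dz$ with $f=\prod_{t\neq s}(z-x_t)^{-\mu_t}$ holomorphic and non-vanishing at $x_s$, one gets $v_{x_s}(\omega)=v_{x_s}(f)-\mu_s=-\mu_s$, so $\omega\in\Gamma(\CP^1,j_*^m\Omega^1(\Lb))$ with valuation exactly $-\mu_s$ (not merely $\geq-\mu_s$). It remains to treat $\infty$: in the coordinate $w=1/z$ one has $dz=-w^{-2}\,dw$ and $\prod_s(z-x_s)^{-\mu_s}=w^{\sum_s\mu_s}\prod_s(1-x_sw)^{-\mu_s}$, and here the normalisation $\sum_s\mu_s=2$ is exactly what lets the $w^2$ cancel the pole $w^{-2}$ from $dz$, so $\omega=-e\prod_s(1-x_sw)^{-\mu_s}\,dw$ is holomorphic and non-vanishing at $w=0$ (consistently, the monodromy around $\infty$ is $\prod_s\alpha_s^{-1}=\exp(-2\pi\imath\sum_s\mu_s)=1$). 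If instead $\infty\in\Sig$, say $\infty=x_n$, I would apply the same computation to $\omega=e\cdot\prod_{x_s\neq\infty}(z-x_s)^{-\mu_s}\,dz$: using $\sum_{s\neq n}\mu_s=2-\mu_n$ one finds $\omega=-e\,w^{-\mu_n}\prod_{s\neq n}(1-x_sw)^{-\mu_s}\,dw$, whence $v_\infty(\omega)=-\mu_n$ and $\omega$ is again invertible on $\CP^1\smin\Sig$. This proves existence together with the exact valuations, the invertibility, and the explicit formula.

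For uniqueness I would take any non-zero $\omega'\in\Gamma(\CP^1,j_*^m\Omega^1(\Lb))$ with $v_{x_s}(\omega')\geq-\mu_s$ for all $s$ and form the quotient $h=\omega'/\omega$. Since $\omega$ and $\omega'$ are sections of the same rank one sheaf $\Omega^1(\Lb)$, the local-system and $z^{-\mu_s}$ contributions cancel, so $h$ is an honest single-valued meromorphic function on $\CP^1$. It is holomorphic on $\CP^1\smin\Sig$ because $\omega$ is invertible and $\omega'$ holomorphic there, and at each $x_s$ one has $v_{x_s}(h)=v_{x_s}(\omega')-v_{x_s}(\omega)\geq-\mu_s-(-\mu_s)=0$, so $h$ is holomorphic at $\Sig$ as well. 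A global holomorphic function on the compact $\CP^1$ is constant, hence $\omega'=h\,\omega$ with $h\in\C$; equivalently, the space of admissible forms is $\C\cdot\omega$, of dimension one.

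The substance of the argument is concentrated in the explicit construction and the verification that $\omega$ is nowhere vanishing, since this is precisely what reduces the uniqueness question to a statement about scalar functions on $\CP^1$. The only genuinely delicate bookkeeping will be the behaviour at $\infty$, where the hypothesis $\sum_s\mu_s=2$ enters in an essential way and where the cases $\infty\in\Sig$ and $\infty\notin\Sig$ must be handled separately; everything else is a routine cancellation of monodromies.
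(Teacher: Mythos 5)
Your proof is correct. Note that the paper itself offers no proof of this statement: it is quoted verbatim as Cor.\ 2.12 of Deligne--Mostow, so there is no internal argument to compare against. Your route --- writing down the candidate $\omega=e\cdot\prod_{x_s\in\Sig}(z-x_s)^{-\mu_s}dz$, checking that the monodromy of $e$ cancels that of the multivalued product (which is exactly the paper's stated convention relating $\alpha_s$ and $z^{-\mu_s}$), verifying that $\sum_s\mu_s=2$ makes $\omega$ holomorphic and non-vanishing at $\infty$ (resp.\ of valuation $-\mu_n$ when $x_n=\infty$), and then deducing uniqueness by observing that the ratio $h=\omega'/\omega$ is a single-valued function, holomorphic on $\CP^1\smin\Sig$ and of valuation $\geq 0$ at each $x_s$, hence constant --- is the natural self-contained argument, and every step checks out against the paper's definitions (in particular $v_{x_s}(u)=v_{x_s}(f)-\mu_s$ makes your valuation bookkeeping for $h$ legitimate). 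For comparison, Deligne--Mostow's own proof runs through a degree computation: the subsheaf of $j_*^m\Omega^1(\Lb)$ of forms with valuation $\geq-\mu_s$ at each $x_s$ is an invertible sheaf on $\CP^1$ of degree $\sum_s\mu_s-2=0$, so its space of global sections is at most one-dimensional, with a generator that is nowhere zero. Your Liouville-style argument is the elementary unwinding of that degree count; what it buys is complete explicitness (the formula for $\omega$ and the exact valuations come for free), while the sheaf-theoretic version generalizes more readily to other valuation constraints.
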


Moreover, we have

\begin{Proposition}[\cite{DeligneMostow86}, Prop. 2.13]\label{prop:form:n:trivial}
Assume that $\alpha_s\neq 1$ for all $s\in \{1,\dots,n\}$, that is none of the $\mu_s$ is an integer, then the cohomology class of the form $\omega$ in the previous proposition is not zero.
\end{Proposition}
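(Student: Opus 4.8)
The plan is to work entirely within the meromorphic de Rham description recalled just above the statement, namely the identification $H^1(\CP^1\smin\Sig,\Lb)\simeq H^1\Gamma(\CP^1,j_*^m\Omega^\bullet(\Lb))$. Since the complex $\Gamma(\CP^1,j_*^m\Omega^\bullet(\Lb))$ has only the two terms $\Gamma(\CP^1,j_*^m\Ocal(\Lb))\xrightarrow{d}\Gamma(\CP^1,j_*^m\Omega^1(\Lb))$, the class $[\omega]$ vanishes if and only if $\omega=dg$ for some \emph{global} meromorphic section $g$ of $\Ocal(\Lb)$. My strategy is to assume such a $g$ exists and derive a contradiction. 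After applying a Möbius transformation (which preserves the entire setting, hence whether $[\omega]$ vanishes) I may assume $\infty\notin\Sig$, so that by Proposition~\ref{prop:mer:form:uniq} one has $\omega=e\cdot\prod_s(z-x_s)^{-\mu_s}\,dz$ with $e$ a horizontal section of $\Lb$. Writing $g=e\cdot v$ and using that $e$ is flat, the equation $\omega=dg$ becomes $v'=\prod_s(z-x_s)^{-\mu_s}$; moreover, matching monodromies forces $v=\prod_s(z-x_s)^{-\mu_s}\,r(z)$ for a \emph{single-valued} meromorphic function $r$ on $\C$ (namely $r=v\cdot\prod_s(z-x_s)^{\mu_s}$, which has trivial monodromy).

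Substituting, the problem reduces to the first-order linear ODE
\[
r'(z)-r(z)\sum_{s=1}^n\frac{\mu_s}{z-x_s}=1 .
\]
Here is where the hypothesis enters. Near each $x_s$ I would examine the Laurent expansion of $r$: if $r$ had a pole of order $m\geq1$ at $x_s$, the left-hand side would have a pole of order $m+1$ with leading coefficient a nonzero multiple of $(m+\mu_s)$, which cannot vanish because $\mu_s\notin\Z$ (this is exactly the assumption $\alpha_s\neq1$); as the right-hand side is regular, $r$ must be holomorphic at $x_s$. Once $r$ is holomorphic there, the residue of the left-hand side at $x_s$ equals $-\mu_s\,r(x_s)$, which must vanish, and since $\mu_s\neq0$ this gives $r(x_s)=0$. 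Thus $r$ is holomorphic on all of $\C$, hence a polynomial, and it is divisible by $P(z)=\prod_s(z-x_s)$, say $r=Pq$, so that $v=\prod_s(z-x_s)^{1-\mu_s}\,q(z)$. Finally I would read off the behaviour at $\infty$: since $\sum_s\mu_s=2$, one has $v\sim z^{\,n-2+\deg q}$ as $z\to\infty$, and because $\infty\notin\Sig$ the section $g=e\cdot v$ must be holomorphic, hence bounded, there; as $n\geq3$ forces $n-2\geq1$, this is impossible unless $q\equiv0$. But then $v\equiv0$ and $\omega=dg=0$, contradicting the fact that $\omega$ is invertible on $\CP^1\smin\Sig$ (Proposition~\ref{prop:mer:form:uniq}). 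Hence $[\omega]\neq0$.

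The crux of the argument --- and the only place the hypothesis $\alpha_s\neq1$ is used --- is the local analysis of the ODE at the punctures, where $\mu_s\notin\Z$ guarantees both that no pole of $r$ can cancel and that $r$ is forced to vanish at each $x_s$; everything else is a degree count at infinity, so I expect no serious obstacle there. An alternative, dual route would be the following: by Proposition~\ref{prop:base:hom} the classes $e(a)\cdot\beta_{|a}$ form a basis of the twisted locally finite homology, which under the assumption $\alpha_s\neq1$ (Proposition~\ref{prop:isom:cohom}) is Poincaré-dual to $H^1(\CP^1\smin\Sig,\Lb)$, so it would suffice to exhibit a single arc $\beta$ between two punctures along which the period $\int_\beta\langle e,\omega\rangle$ is nonzero. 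In the range $0<\mu_s<1$ of interest and for a configuration with all $x_s$ real and ordered this is immediate, since on a segment between consecutive points the integrand $\prod_s(z-x_s)^{-\mu_s}$ has constant argument and the period is a nonzero multiple of a convergent positive real integral. However, a general configuration cannot be normalized to be real by a Möbius transformation, so controlling the phase of the period for arbitrary $\Sig$ is delicate; for this reason I favour the self-contained de Rham/ODE argument above as the cleaner path.
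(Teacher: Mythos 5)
Your de Rham/ODE argument is correct, and it is worth noting that the paper itself contains no proof of this proposition to compare against: it is imported verbatim from \cite{DeligneMostow86}, so your argument actually fills in the missing proof. What you have written is, in substance, the classical valuation count made explicit: if $\omega=dg$ with $g\in\Gamma(\CP^1,j_*^m\Ocal(\Lb))$, then because every valuation of $g$ at $x_s$ lies in $-\mu_s+\Z$ and hence is nonzero (this is exactly where $\alpha_s\neq1$ enters), differentiation lowers valuations by exactly one, forcing $v_{x_s}(g)=1-\mu_s$; since $g$ is holomorphic off $\Sig$, the total degree of its divisor is at least $\sum_s(1-\mu_s)=n-2>0$, whereas the total degree of the divisor of any nonzero meromorphic section of $\Ocal(\Lb)$ is $0$ --- contradiction. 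Your pole-order analysis, residue computation, and degree count at $\infty$ for $r=v\cdot\prod_s(z-x_s)^{\mu_s}$ reproduce precisely this, so the route is essentially the standard one, just unpacked into an explicit first-order ODE. Two minor remarks. First, the inference ``holomorphic on all of $\C$, hence a polynomial'' is not valid on its own: you need the growth bound at infinity (namely $v$ bounded near $\infty$ because $g$ is holomorphic there and $\infty\notin\Sig$, while $\prod_s(z-x_s)^{-\mu_s}\sim c\,z^{-2}$, so $|r|=O(|z|^2)$), which you do establish --- but one sentence too late; it must precede, not follow, the conclusion that $r$ is a polynomial. This is a harmless reordering, not a gap, since after it $r$ is a polynomial of degree at most $2$ vanishing at $n\geq3$ distinct points, hence $r\equiv0$, and the ODE then reads $0=1$. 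Second, under this paper's standing convention $\mu_s>0$, the two places where you invoke $\mu_s\notin\Z$ (nonvanishing of $m+\mu_s$ and of $\mu_s$) would already follow from positivity alone; the hypothesis $\alpha_s\neq1$ is what makes your argument valid in Deligne--Mostow's generality of arbitrary real weights summing to $2$, so your attribution is the right one. Finally, you were right to discard the ``dual'' period route: controlling the phase of a single period for a general configuration is genuinely delicate, whereas the argument you gave is self-contained.
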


Let us assume that none of the $\alpha_s$ is $1$. Let $[\omega]$ denote the cohomology class of $\omega$ in $H^1(\CP^1\smin\Sig,\Lb)$. Since we have $H^1(\CP^1\smin\Sig, \Lb)\simeq H^1_c(\CP^1\smin\Sig,\Lb)$ (cf. Proposition~\ref{prop:isom:cohom}), $\omega$ also gives a cohomology class,  denoted again by $[\omega]$, in $H^1_c(\CP^1\smin\Sig,\Lb)$. Thus, for any locally finite cycle $C$ in $H_1^{\rm lf}(\CP^1\smin\Sig, \Lb^\vee)$, $\langle [C],[\omega]\rangle$  is well-defined. If $C$ is represented by a compactly supported cycle, then
$$
\langle [C],[\omega] \rangle =\int_C\omega.
$$
If $C=e'\cdot\beta$ is a cycle where $\beta: [0,1] \ra \CP^1$ is such that $\beta(0),\beta(1)\in \Sig$,  $\beta((0,1)) \subset \CP^1\smin\Sig$, and $e'$ is a horizontal section of $\beta^*\Lb^\vee$ on $(0,1)$. We can define a finite cycle $C'$ with coefficients in $\Lb^\vee$ homologous to $C$ as follows: let $x_{s_0}=\beta(0), x_{s_1}=\beta(1)$, and $\Dis_i$ a small disc centered at $x_{s_i}$ such that $\Dis_i\cap \Sig=\{x_{s_i}\}$, and $\Dis_0\cap\Dis_1=\vide$. Let $C_i, \,  i=0,1$, be a circle centered at $x_{s_i}$ and contained in $\Dis_i$. Let $I$ denote the  interval $[0,1]$.  Let $y_0=\beta(\eps_0)$ be the first intersection of $\beta(I)$ and $C_0$, and $y_1=\beta(1-\eps_1)$ be the last intersection of $\beta(I)$ with $C_1$. We consider $y_0$ and $y_1$ as base points of $C_0$ and $C_1$ respectively, and parametrize those circles counter-clockwise by the maps $\gamma_i:[0,1]\ra C_i$. Let $I':=[\eps_0,1-\eps_1]$, and $\beta'$ be the restriction of $\beta$  to $I'$.
Let $e'_i:=e'(y_i)/(\alpha_{s_i}^{-1}-1)$. We also denote  by $e'$ the unique horizontal section of $\gamma_i^*\Lb^\vee$ determined by this vector. Consider the 1-chain $e'_i\cdot\gamma_i$ with coefficients in $\Lb^\vee$. Since the monodromy of $\Lb^\vee$ at $x_{s_i}$ is $\alpha^{-1}_{s_i}$, we get $ d(e'_i\cdot\gamma_i)=e'\cdot\{y_i\}$. Let $C'$ denote  the 1-cycle $e'_0\cdot\gamma_0+e'\cdot\beta'-e'_1\cdot\gamma_1$. One can easily check that $dC'=0$, and $[C']=[C] \in H_1^{\rm lf}(\CP^1\smin\Sig,\Lb^\vee)$. Since $C'$ is compactly supported, we have
$$
\langle [C],[\omega] \rangle =\langle [C'], [\omega] \rangle =\int_{C'} \omega.
$$

\begin{Remark}\label{rk:cal:by:int}
If $\omega=e\cdot \prod_{x_s\in \Sig}(z-x_s)^{-\mu_s}dz$, where $0< \mu_s < 1$ for all $s\in\{1,\dots,n\}$, and $\beta$ is a path from $x_{s_1}$ to $x_{s_2}$ without passing through any point in $\Sig$, then we also have
\begin{equation}\label{eq:pairing:by:int}
\langle [C],[\omega] \rangle = \langle e',e \rangle \int_\beta \prod_{x_s\in \Sig}(z-x_s)^{-\mu_s}dz.
\end{equation}
\end{Remark}

\subsection{Hermitian structure}\label{sec:hermstruct} Since all the $\alpha_s$ have modulus equal to $1$, $\Lb$ admits a horizontal Hermitian metric $(.,.)$. We can use this metric to define a perfect pairing
$$
\psi_0:  H^1_c(\CP^1\smin\Sig,\Lb)\otimes_{\C}H^1_c(\CP^1\smin\Sig,\bar{\Lb}) \ra H^2_c(\CP^1\smin\Sig,\C)\simeq \C
$$
where $\bar{\Lb}$ is the complex conjugate local system of $\Lb$.
The vector space $H^1_c(\CP^1\smin\Sig,\bar{\Lb})$ is the complex conjugate of $H^1_c(\CP^1\smin\Sig,\Lb)$.   By setting
$$
((u,v)):=\frac{-1}{2\pi\imath}\psi_0(u,\bar{v})
$$
\noindent we get a Hermitian form on $H^1_c(\CP^1\smin\Sig,\Lb)$.

A section $\omega$ of $j_*^m\Omega^1(\Lb)$  is said to be  {\em of the first kind} if $v_{x_s}(\omega)>-1$ for all $x_s\in \Sig$. For such a form,
we have $|\int_{\CP^1\smin\Sig}\omega\wedge\ol{\omega}| < \infty $. We define $H^{1,0}(\CP^1\smin\Sig,\Lb)$ to be the vector space of forms of the first kind in $\Gamma(\CP^1,j_*^m\Omega^1(\Lb))$, and $H^{0,1}(\CP^1\smin\Sig,\Lb)$ as the complex conjugate of $H^{1,0}(\CP^1\smin\Sig,\bar{\Lb})$. The latter is the space of anti-holomorphic $\Lb$-valued $1$-forms, whose complex conjugate is of the first kind. As usual, such a form $\omega$ defines a cohomology class $[\omega]\in H^1(\CP^1\smin\Sig,\Lb)\simeq H_c^1(\CP^1\smin\Sig,\Lb)$.

\begin{Proposition}\label{prop:herm:coh:class}[\cite{DeligneMostow86} Prop. 2.19]
 If $\omega_1$ and $\omega_2$ are in $H^{1,0}(\CP^1\smin\Sig,\Lb)\cup H^{0,1}(\CP^1\smin\Sig,\Lb)$ then
 $$
 (([\omega_1],[\omega_2]))=\frac{-1}{2\pi\imath}\int_{\CP^1\smin\Sig}\omega_1\wedge\ol{\omega}_2.
 $$
\end{Proposition}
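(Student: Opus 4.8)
The plan is to reduce the statement to the definition of $\psi_0$ on \emph{compactly supported} representatives, where it is literally integration against the Hermitian pairing, and then to transfer the value to the meromorphic representatives $\omega_1,\omega_2$ by a Stokes-type argument whose boundary contributions vanish precisely because the forms are of the first kind. First I would invoke Proposition~\ref{prop:isom:cohom}: the natural map $H^1_c(\CP^1\smin\Sig,\Lb)\ra H^1(\CP^1\smin\Sig,\Lb)$ is an isomorphism, and likewise for $\bar{\Lb}$, with the conjugate class $\ol{[\omega_2]}$ corresponding to the class of the closed first-kind form $\ol\omega_2$ in $H^1(\CP^1\smin\Sig,\bar{\Lb})$. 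By the very definition of $\psi_0$ (Poincar\'e pairing followed by the metric pairing $\Lb\otimes\bar{\Lb}\ra\C$), if $\eta_1$ is \emph{any} compactly supported representative of $[\omega_1]$ and we use $\ol\omega_2$ as the representative of the second factor, then
$$
\psi_0([\omega_1],\ol{[\omega_2]})=\int_{\CP^1\smin\Sig}\eta_1\wedge\ol\omega_2 ,
$$
the integrand being scalar valued through the metric. It remains to replace $\eta_1$ by $\omega_1$.

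Next I would build such an $\eta_1$ by cutting off local primitives near $\Sig$. In a coordinate disc $\Dis$ around $x_s$, the first-kind condition $v_{x_s}(\omega_1)>-1$ lets me solve $d\phi^{(s)}=\omega_1$ by an $\Lb$-valued function $\phi^{(s)}$; writing $\omega_1=z^{-\mu_s}\cdot e\cdot f\,dz$, this is an elementary first order ODE. The delicate point is single-valuedness: among the primitives, only the one with no constant term is single valued, since a surviving constant term $e$ would carry monodromy $\alpha_s\neq1$, whereas every monomial $e\,z^{k-\mu_s}$ with $k\geq1$ is monodromy invariant. This distinguished primitive satisfies $v_{x_s}(\phi^{(s)})=v_{x_s}(\omega_1)+1>0$, so $\phi^{(s)}\ra0$ at $x_s$. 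Choosing cutoff functions $\chi_s$ equal to $1$ near $x_s$ and supported in $\Dis$, the form $\eta_1:=\omega_1-d\bigl(\sum_s\chi_s\phi^{(s)}\bigr)$ vanishes near every $x_s$, hence is compactly supported in $\CP^1\smin\Sig$ and cohomologous to $\omega_1$.

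Finally I would compare the two integrals. Setting $\phi:=\sum_s\chi_s\phi^{(s)}$ and using $d\ol\omega_2=0$ together with the horizontality of the metric (so that the metric pairing intertwines the flat connection with the ordinary $d$), one gets $d\phi\wedge\ol\omega_2=d(\phi\wedge\ol\omega_2)$, whence
$$
\int_{\CP^1\smin\Sig}\eta_1\wedge\ol\omega_2=\int_{\CP^1\smin\Sig}\omega_1\wedge\ol\omega_2-\lim_{\veps\to0}\sum_{s}\oint_{|z-x_s|=\veps}\phi\wedge\ol\omega_2 ,
$$
where the first integral converges because $v_{x_s}(\omega_1)+v_{x_s}(\omega_2)>-2$. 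The main obstacle is exactly the vanishing of the boundary terms: near $x_s$ one has $\phi=O\bigl(|z|^{\,v_{x_s}(\omega_1)+1}\bigr)$ and $\ol\omega_2=O\bigl(|z|^{\,v_{x_s}(\omega_2)}\bigr)\,d\ol z$, so each circle integral is $O\bigl(\veps^{\,v_{x_s}(\omega_1)+v_{x_s}(\omega_2)+2}\bigr)$, which tends to $0$ precisely because both forms are of the first kind. This yields $\psi_0([\omega_1],\ol{[\omega_2]})=\int_{\CP^1\smin\Sig}\omega_1\wedge\ol\omega_2$, and dividing by $-2\pi\imath$ gives the claimed identity. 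The argument is uniform in the four combinations of $H^{1,0}$ and $H^{0,1}$: it only uses that $\omega_1$ is of the first kind, that $\ol\omega_2$ is closed with first-kind growth, and the decay estimate above; in the combinations where the two factors have the same bidegree, both sides vanish and the same computation confirms it.
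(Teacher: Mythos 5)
Your proposal is correct, but there is no proof in the paper to measure it against: the paper states this proposition purely as a citation of Prop.~2.19 of \cite{DeligneMostow86} and never proves it. What you have written is a correct reconstruction of the standard argument behind that result. The two pillars are exactly right: (1) the unique \emph{single-valued} local primitive $\phi^{(s)}$ near $x_s$ exists because $\alpha_s\neq 1$ kills the constant (flat-section) ambiguity and because $k+1-\mu_s\neq 0$ for all $k\geq 0$, and it satisfies $v_{x_s}(\phi^{(s)})=v_{x_s}(\omega_1)+1>0$ by the first-kind hypothesis, so that $\eta_1=\omega_1-d\bigl(\sum_s\chi_s\phi^{(s)}\bigr)$ is a compactly supported representative of the class that Proposition~\ref{prop:isom:cohom} attaches to $[\omega_1]$ in $H^1_c$; (2) the circle integrals are $O\bigl(\veps^{\,v_{x_s}(\omega_1)+v_{x_s}(\omega_2)+2}\bigr)$ with positive exponent, which simultaneously kills the boundary terms and guarantees absolute convergence of $\int\omega_1\wedge\ol\omega_2$. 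Two points are worth making explicit, though neither is a gap. First, when you write $\psi_0([\omega_1],\ol{[\omega_2]})=\int\eta_1\wedge\ol\omega_2$ with $\ol\omega_2$ not compactly supported, you are tacitly using that $\int\eta_1\wedge d\gamma=0$ for an arbitrary (non-compactly supported) $\gamma$, which follows from Stokes applied to the compactly supported form $\eta_1\wedge\gamma$; this is what allows a compactly supported class to be paired against any representative of an ordinary cohomology class under the isomorphism $H^1_c(\CP^1\smin\Sig,\bar{\Lb})\simeq H^1(\CP^1\smin\Sig,\bar{\Lb})$. Second, your closing remark about the degenerate combinations is correct only with the reading that ``the two factors'' are $\omega_1$ and $\ol\omega_2$: the integrand vanishes identically precisely when $\omega_1$ and $\omega_2$ have \emph{different} types (so that $\omega_1$ and $\ol\omega_2$ have the same bidegree), which is the computation underlying the orthogonality $H^{1,0}\perp H^{0,1}$ in Proposition~\ref{prop:herm:sign}. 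With those clarifications, the proof stands as a complete substitute for the citation.
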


\begin{Proposition}\label{prop:herm:sign}[\cite{DeligneMostow86} Prop. 2.20]
 Assume that $0< \mu_s < 1$ for all $s\in \{1,\dots,n\}$, then the natural map
 $$
 H^{1,0}(\CP^1\smin\Sig,\Lb)\oplus H^{0,1}(\CP^1\smin\Sig,\Lb) \ra H^1(\CP^1\smin\Sig,\Lb)\simeq H^1_c(\CP^1\smin\Sig,\Lb)
 $$
 \noindent is an isomorphism. The Hermitian form $((.,.))$ is positive definite on $H^{1,0}$, negative definite on $H^{0,1}$, and the decomposition is orthogonal.  Since $\dim_\C H^{1,0}=1$, and $\dim_\C H^{0,1}=n-3$, the signature of $((.,.))$ is $(1,n-3)$.
\end{Proposition}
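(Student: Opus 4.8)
The plan is to reduce everything to Proposition~\ref{prop:herm:coh:class}, which computes $((\cdot,\cdot))$ by an explicit integral, and to organize the argument into a dimension count, a sign computation, and a nondegeneracy deduction. First I would pin down $\dim_\C H^{1,0}$. By Proposition~\ref{prop:mer:form:uniq} the canonical form $\omega=e\cdot\prod_{x_s\in\Sig}(z-x_s)^{-\mu_s}\,dz$ (taking $\infty\not\in\Sig$) has valuation $-\mu_s>-1$ at each $x_s$, hence is of the first kind, and it is invertible on $\CP^1\smin\Sig$. Thus every first-kind form is $\omega\cdot R$ with $R$ rational and holomorphic on $\CP^1\smin\Sig$; the condition $v_{x_s}>-1$ forces $R$ holomorphic at each $x_s$, while at $\infty$ the identity $\sum_s\mu_s=2$ makes the monodromy trivial and $v_\infty(\omega)=0$, so $R$ must be holomorphic at $\infty$ as well. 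Hence $R$ is constant and $\dim H^{1,0}=1$, spanned by $\omega$.

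Second, I would run the same bookkeeping for $\overline{\Lb}$, whose normalized weights are $1-\mu_s\in(0,1)$ with $\sum_s(1-\mu_s)=n-2$. A first-kind $\overline{\Lb}$-form can be written $\overline e\cdot\prod_s(z-x_s)^{-(1-\mu_s)}R(z)\,dz$ with $R$ holomorphic on $\C$, i.e. a polynomial; at $\infty$ the form behaves like $w^{(n-2)-2}\,dw=w^{n-4}\,dw$, so holomorphy there only imposes $\deg R\le n-4$. This gives $\dim_\C H^{1,0}(\CP^1\smin\Sig,\overline{\Lb})=n-3$, hence $\dim_\C H^{0,1}=n-3$, and therefore $\dim(H^{1,0}\oplus H^{0,1})=n-2=\dim H^1$ by Proposition~\ref{prop:dim:cohom}.

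Third, I would compute the Hermitian form with Proposition~\ref{prop:herm:coh:class}. For $\omega_1,\omega_2\in H^{1,0}$, writing $\omega_i=e\,g_i\prod_s(z-x_s)^{-\mu_s}\,dz$ and using $dz\wedge d\bar z=-2\imath\,dx\wedge dy$ yields $((\omega_1,\omega_2))=\tfrac1\pi|e|^2\int g_1\overline{g_2}\prod_s|z-x_s|^{-2\mu_s}\,dx\,dy$, a positive-definite form; the integral converges precisely because $\mu_s<1$ (local integrability of $|z-x_s|^{-2\mu_s}$) and $\sum_s\mu_s=2$ (decay $|z|^{-4}$ at $\infty$). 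For $H^{0,1}$, conjugation replaces $dz\wedge d\bar z$ by $d\bar z\wedge dz=+2\imath\,dx\wedge dy$, producing the opposite overall sign and hence a negative-definite form, with convergence now coming from $1-\mu_s<1$ and the degree bound $\deg R\le n-4$, which again forces $|z|^{-4}$ decay. Orthogonality is immediate: for $\omega_1\in H^{1,0}$ and $\overline{\eta_2}\in H^{0,1}$, the integrand $\omega_1\wedge\overline{\overline{\eta_2}}=\omega_1\wedge\eta_2$ is a wedge of two holomorphic $(1,0)$-forms and therefore vanishes.

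Finally I would deduce the isomorphism. Since $((\cdot,\cdot))$ is definite on each summand and the summands are orthogonal, the natural map $H^{1,0}\oplus H^{0,1}\to H^1$ is injective: if $[\omega_1]+[\overline{\eta_2}]=0$, pairing with $[\omega_1]$ and invoking orthogonality gives $((\omega_1,\omega_1))=0$, whence $\omega_1=0$, and then $((\overline{\eta_2},\overline{\eta_2}))=0$ forces $\overline{\eta_2}=0$. Injectivity together with the dimension count $n-2$ makes the map an isomorphism, and the signature $(1,n-3)$ follows at once. The main obstacle I anticipate is the second step, namely pinning down $\dim H^{0,1}=n-3$ through the degree bound at $\infty$: this is the one place where the global constraint $\sum_s\mu_s=2$ (via the trivial monodromy and the valuation $n-4$ of $w^{n-4}\,dw$) does genuine work, whereas the sign and convergence analysis of the third step, though essential and crucially using $0<\mu_s<1$, is routine once Proposition~\ref{prop:herm:coh:class} is available.
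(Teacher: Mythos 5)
Your proof is correct, and it takes essentially the same route as the source the paper cites for this statement (the paper gives no proof of its own, quoting Deligne--Mostow, Prop.\ 2.20): explicit rational normal forms to count $\dim H^{1,0}(\CP^1\smin\Sig,\Lb)=1$ and $\dim H^{1,0}(\CP^1\smin\Sig,\ol{\Lb})=n-3$, the sign and convergence analysis of the integral formula of Proposition~\ref{prop:herm:coh:class} (where $0<\mu_s<1$ enters), and orthogonality plus definiteness at the level of forms to deduce injectivity, hence the isomorphism and the signature. There is nothing further to compare against in the paper itself.
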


\subsection{Local system and the line bundle $\Lcal$ over $\Mod_{0,n}$}\label{sec:def:L}
Recall that $\Mod_{0,n}$ is the moduli space parametrizing Riemann surfaces of genus zero and $n$ marked points (punctures). Since every Riemann surface of genus zero is isomorphic to $\CP^1$, we can also view $\Mod_{0,n}$ as the space of configurations of $n$ distinct points on $\CP^1$ up to action of ${\rm PGL}(2,\C)$.  If $\Sig=\{x_1,\dots,x_n\}, \, n\geq 3$, is a set of $n$ points in $\CP^1$, then up to action of ${\rm PGL}(2,\C)$, we can always assume that $x_{n-2}=0, x_{n-1}=1, x_n=\infty$. Thus, $\Mod_{0,n}$ can be identified with the subset of $(\CP^1)^{n-3}$ consisting of $(n-3)$-tuples $(x_1,\dots,x_{n-3})$ such that $x_s\neq x_{s'}$ if $s\neq s'$, and $x_s\not\in \{0,1,\infty\}$.

Over $\Mod_{0,n}$ we have  a fibration $\pi : \UC_{0,n} \ra \Mod_{0,n}$ whose fiber over a point $m\in \Mod_{0,n}$ is the $n$-punctured sphere represented by $m$. Let $\ol{\Mod}_{0,n}$ be the Deligne-Mumford-Knudsen compactification of $\Mod_{0,n}$. We also have a fibration $\pi: \ol{\UC}_{0,n} \ra \ol{\Mod}_{0,n}$ extending the projection from $\UC_{0,n}$ to $\Mod_{0,n}$, where $\ol{\UC}_{0,n}$ is the {\em universal curve} which is a compact space containing $\UC_{0,n}$ as an open dense subset.   It is well known that $\pi$ is a flat proper morphism, and there exist by construction $n$ sections $\sig_1\dots,\sig_n$ of $\pi$ such that $\sig_s(m)$ is the $s^{\rm th}$ marked point on the stable curve $\pi^{-1}(m)$. Note that $\ol{\UC}_{0,n}$ is actually isomorphic to $\ol{\Mod}_{0,n+1}$, and  $\ol{\Mod}_{0,n}$ is a smooth projective variety.

Fix a vector $\mu:=(\mu_1,\dots,\mu_n)\in (\R_{>0})^n$ such that $\mu_1+\dots+\mu_n=2$, and $\mu_s \not\in \N$ for all $s\in \{1,\dots,n\}$.  By~\cite{DeligneMostow86}, Section 3.13, there exists a rank one local system $\Lb^\mu$ on $\UC_{0,n}$
such that, for any $m\in\Mod_{0,n}$, the induced local system $\Lb^\mu_m$ on $\pi^{-1}(m)\simeq (\CP^1,(x_1,\dots,x_n))$ has monodromy given by $\alpha_s=\exp(2\pi\imath\mu_s)$ at each puncture $x_s$.
Since the projection $\pi:\UC_{0,n}\ra \Mod_{0,n}$ is locally topologically trivial,
setting $\Hb^{\mu}:=R^1\pi_*\Lb^\mu$ we get a local system of rank $n-2$ over $\Mod_{0,n}$ whose fiber over $m$ is $H^1(\CP^1\smin\Sig,\Lb_m^\mu)\simeq H^1_c(\CP^1\smin\Sig,\Lb_m^\mu)$. Associated to this local system is a flat projective space bundle $\mathbb{P}\Hb^\mu$ whose fiber over $m$  is $\mathbb{P}\Hb_m^\mu \simeq \CP^{n-3}$.

We have seen that for each $m\in \Mod_{0,n}$, up to a constant factor, there is a unique $\Lb_m^\mu$-valued meromorphic $1$-form $\omega_m\in \Gamma(\CP^1,j^m_*\Omega^1(\Lb_m^\mu))$ such that the valuation of $\omega_m$ at the puncture $x_s$ is exactly $-\mu_s$. By Proposition~\ref{prop:form:n:trivial}, we know that $\omega_m$ represents a non-trivial cohomology class in $\Hb_m^\mu$. Thus $\omega_m$ provides us with a section of the flat projective space bundle $\mathbb{P}\Hb^\mu$. Let us denote this section by $\Xi_\mu$.

Since  the pull-back of the bundle $\mathbb{P}\Hb^\mu$ to the universal cover $\widetilde{\Mod}_{0,n}$ is isomorphic to the trivial bundle $\widetilde{\Mod}_{0,n}\times\CP^{n-3}$, the section $\Xi_\mu$ gives rise to a map  $\widetilde{\Xi}_\mu: \widetilde{\Mod}_{0,n} \ra \CP^{n-3}$. We have the following crucial result

\begin{Proposition}[\cite{DeligneMostow86}, Lem. 3.5, Prop. 3.9]\label{prop:section:hol:etale}
 The section $\Xi_\mu$ is holomorphic, and the map $\widetilde{\Xi}_\mu: \widetilde{\Mod}_{0,n} \ra \CP^{n-3}$ is \'etale.
\end{Proposition}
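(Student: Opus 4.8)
The plan is to treat holomorphicity and the étale property separately, working locally over a contractible chart $U\subset\Mod_{0,n}$ on which the flat bundle $\mathbb{P}\Hb^\mu$ is trivialized by its Gauss--Manin connection; over such a chart $\Xi_\mu$ is identified with (a restriction of) the lifted map $\widetilde{\Xi}_\mu$. Normalizing $x_{n-2}=0$, $x_{n-1}=1$, $x_n=\infty$, I use $(x_1,\dots,x_{n-3})$ as holomorphic coordinates on $U$ and the explicit representative $\omega_m=e\cdot\prod_{s}(z-x_s)^{-\mu_s}dz$ furnished by Proposition~\ref{prop:mer:form:uniq}.

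\emph{Holomorphicity.} By Proposition~\ref{prop:base:hom} I fix a basis $C_1,\dots,C_{n-2}$ of $H_1^{\rm lf}(\CP^1\smin\Sig,\Lb^\vee)$ that is flat (locally constant) in the trivialization over $U$, so that homogeneous coordinates for $\Xi_\mu(m)$ are the periods $P_a(m)=\langle [C_a],[\omega_m]\rangle$. By Remark~\ref{rk:cal:by:int}, each $P_a$ equals $\langle e',e\rangle\int_{\beta_a}\prod_s(z-x_s)^{-\mu_s}dz$. Since the integrand is holomorphic in the parameters $(x_1,\dots,x_{n-3})$ and the chains $\beta_a$ can be kept at fixed positive distance from the moving singularities, differentiation under the integral sign shows that each $P_a$ is holomorphic in $m$; hence $\Xi_\mu$, and therefore $\widetilde{\Xi}_\mu$, is holomorphic.

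\emph{Étale.} Because $\dim\widetilde{\Mod}_{0,n}=n-3=\dim\CP^{n-3}$, it suffices to prove that $d\widetilde{\Xi}_\mu$ is injective at every point. In the flat frame, injectivity of the differential of the projective map $m\mapsto[P_1(m):\dots:P_{n-2}(m)]$ is equivalent to the linear independence of the vector $(P_a(m))_a$ together with the $n-3$ derivative vectors $(\partial_{x_s}P_a(m))_a$. Differentiating under the integral as above gives
\[
\partial_{x_s}P_a(m)=\mu_s\,\big\langle [C_a],\big[\tfrac{\omega_m}{z-x_s}\big]\big\rangle,
\]
so these derivative vectors are exactly the period vectors of the $\Lb_m$-valued meromorphic forms $\frac{\omega_m}{z-x_s}$, $s=1,\dots,n-3$. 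Since the Poincaré pairing between $H^1$ and $H_1^{\rm lf}$ is perfect, the full collection of period vectors is linearly independent if and only if the classes
\[
[\omega_m],\ \Big[\tfrac{\omega_m}{z-x_1}\Big],\ \dots,\ \Big[\tfrac{\omega_m}{z-x_{n-3}}\Big]
\]
are linearly independent in $H^1(\CP^1\smin\Sig,\Lb_m)$. As there are exactly $n-2=\dim H^1$ of them, this amounts to showing they form a basis.

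\emph{Main obstacle.} The crux is this last linear-independence statement, which I would settle in the meromorphic de Rham model $H^1\simeq\Gamma(\CP^1,j_*^m\Omega^1(\Lb))/d\,\Gamma(\CP^1,j_*^m\Ocal(\Lb))$. Writing every class as $f\cdot\omega_m$ for a rational $f$ with poles in $\Sig$, the exact forms are precisely $\big(f'-f\sum_t\frac{\mu_t}{z-x_t}\big)\omega_m$, and a partial-fraction/residue computation then shows that the $n-2$ rational functions $1,\frac{1}{z-x_1},\dots,\frac{1}{z-x_{n-3}}$ give a basis of this twisted (Aomoto-type) cohomology; this is exactly where Deligne--Mostow (Lem.~3.5, Prop.~3.9) carry out the real work. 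Two points deserve care along the way: that convergence of the periods near the cone points is controlled, for which one uses the isomorphism $H^1\simeq H^1_c$ of Proposition~\ref{prop:isom:cohom} to regularize when some $\mu_s\ge1$; and that the Gauss--Manin derivative of $[\omega_m]$ is genuinely represented by the term-by-term derivative $\partial_{x_s}\omega_m$ up to exact forms, which is what legitimizes the differentiation-under-the-integral step in the étale part.
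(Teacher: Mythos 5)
The paper itself contains no proof of this proposition: it is imported wholesale from Deligne--Mostow (\cite{DeligneMostow86}, Lem.~3.5 and Prop.~3.9), so the only meaningful comparison is with the source, whose argument your proposal essentially reconstructs: flat trivialization of $\Hb^\mu$, periods against a flat basis of cycles as homogeneous coordinates, the Gauss--Manin derivative $\partial_{x_s}[\omega_m]=\mu_s\bigl[\omega_m/(z-x_s)\bigr]$, and the reduction of \'etaleness to the statement that $[\omega_m],[\omega_m/(z-x_1)],\dots,[\omega_m/(z-x_{n-3})]$ form a basis of $H^1(\CP^1\smin\Sig,\Lb)$, which has dimension $n-2$ by Proposition~\ref{prop:dim:cohom}. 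That reduction is correct. But as written your text has two defects. The first is minor but real: the basis of $H_1^{\rm lf}$ furnished by Proposition~\ref{prop:base:hom} consists of tree edges whose endpoints lie \emph{in} $\Sig$, so these chains cannot ``be kept at fixed positive distance from the moving singularities''; worse, $\omega_m/(z-x_s)$ has valuation $-\mu_s-1<-1$ at $x_s$, so its naive integral over a chain abutting $x_s$ diverges. The cure is the replacement, recalled in Section~2 of the paper, of each locally finite cycle by the homologous compactly supported cycle $e'_0\cdot\gamma_0+e'\cdot\beta'-e'_1\cdot\gamma_1$; after this, the cycles are fixed, compact and disjoint from $\Sig$, differentiation under the integral sign is legitimate, and all pairings are honest integrals.

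The second defect is the substantive one: the linear independence that you label the ``main obstacle'' and defer back to Deligne--Mostow is precisely the content of the \'etale statement, so your proposal is a correct reduction rather than a proof. It is, however, closable in a few lines by exactly the residue computation you name. Normalize $x_{n-2}=0$, $x_{n-1}=1$, $x_n=\infty$, write $\omega_m=e\cdot\prod_{t\leq n-1}(z-x_t)^{-\mu_t}dz$, and recall that in the meromorphic de Rham model the exact forms are those of the shape
\begin{equation*}
d\Bigl(g\cdot e\cdot\prod_{t\leq n-1}(z-x_t)^{-\mu_t}\Bigr)
=\Bigl(g'-g\sum_{t\leq n-1}\tfrac{\mu_t}{z-x_t}\Bigr)\,e\cdot\prod_{t\leq n-1}(z-x_t)^{-\mu_t}dz,
\end{equation*}
with $g$ rational with poles in $\Sig$. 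Suppose $c_0+\sum_{s=1}^{n-3}c_s/(z-x_s)=g'-g\sum_{t\leq n-1}\mu_t/(z-x_t)$. If $g$ had a pole of order $k\geq1$ at a finite $x_t$, the right-hand side would have a pole of order $k+1\geq2$ there with leading coefficient proportional to $k+\mu_t\neq0$, while the left-hand side has at most simple poles; hence $g$ is a polynomial. If $\deg g\geq2$, the right-hand side grows like $(\deg g-2+\mu_n)\,z^{\deg g-1}$ at infinity, with $\deg g-2+\mu_n>0$, while the left-hand side is bounded; hence $\deg g\leq1$. Regularity of the left-hand side at $x_{n-2}$ and $x_{n-1}$ forces $g(x_{n-2})=g(x_{n-1})=0$ (the residues there are $-\mu_{n-2}\,g(x_{n-2})$ and $-\mu_{n-1}\,g(x_{n-1})$), so $g\equiv0$ and therefore all $c$'s vanish. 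With these two repairs your argument is complete and coincides, in substance, with the proof of Deligne--Mostow that the paper cites.
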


A direct consequence of Proposition~\ref{prop:section:hol:etale} is that we have a holomorphic line bundle $\Lcal$ over $\Mod_{0,n}$ whose fiber over $m$ is the line $\C\cdot[\omega_m] \subset H^1(\CP^1\smin\Sig,\Lb_m^\mu)$.

Assume moreover that $0< \mu_s< 1$, for all $s \in \{1,\dots,n\}$. We have seen that in this case, the fiber $\Hb_m^\mu\simeq H^1(\CP^1\smin\Sig,\Lb_m^\mu)$ of the local system (flat bundle) $\Hb^\mu$ carries a Hermitian form of signature $(1,n-3)$. This Hermitian form then gives rise to a horizontal Hermitian metric on $\Hb^\mu$. Therefore, we have a flat bundle over $\Mod_{0,n}$ whose fiber over $m$ is the ball $\Bb_m \subset \mathbb{P}\Hb^\mu_m$ which is defined by
$$
\Bb_m:=\{\C\cdot v \in \mathbb{P}\Hb_m^\mu, \; ((v,v))_m >0\}.
$$
\noindent By Proposition~\ref{prop:herm:sign}, the line $\C\cdot[\omega_m]$ belongs to $\Bb_m$. Thus, the map $\widetilde{\Xi}_\mu$ actually takes values in a fixed ball $\Bb\subset \CP^{n-3}$. As a consequence, we see that $\Lcal$ is locally the pull-back by $\Xi_\mu$ of the restriction of the tautological line bundle of $\CP^{n-3}$ to $\Bb$.  Remark that $\Lcal$ carries  naturally a Hermitian metric induced by the Hermitian metric on $\Hb^\mu$. The line bundle $\Lcal$ and its Chern form will be our main focus in the rest of this paper.


\section{Local coordinates at boundary points of $\ol{\Mod}_{0,n}$}\label{sec:coord:bdry}
It is well-known that the complement of $\Mod_{0,n}$ in $\ol{\Mod}_{0,n}$ is the union of finitely many divisors called {\it vital divisors}, each of which uniquely corresponds to a partition of $\{1,\dots,n\}$ into two subsets $I_0\sqcup I_1$ such that $\min\{|I_0|,|I_1|\} \geq 2$. Let $\Pcal$ be the set of partitions satisfying this condition. For each partition $\Scal:=\{I_0,I_1\} \in \Pcal$, we denote by $D_\Scal$ the corresponding divisor in  $\ol{\Mod}_{0,n}$. Here below, we collect some classical facts on those divisors which are  relevant for our purpose (see \cite{Keel92}).

\begin{itemize}
 \item[(i)]  The family $\{D_\Scal, \, \Scal\in \Pcal\}$ consists of smooth divisors with normal crossings.

 \item[(ii)] If  $\Scal=\{I_0,I_1\}$, then $D_\Scal$ is isomorphic to $\ol{\Mod}_{0,|I_0|+1}\times \ol{\Mod}_{0,|I_1|+1}$.

 \item[(iii)] Let $\Scal=\{I_0,I_1\}$ and $\Scal'=\{J_0,J_1\}$ be two partitions in $\Pcal$.  Then $D_\Scal\cap D_{\Scal'}=\varnothing$ unless one of the following occurs:
 $$
 I_0 \subset J_0, \quad I_0\subset J_1, \quad I_1 \subset J_0, \quad I_1\subset J_1.
 $$
 \end{itemize}

We first  need to describe a neighborhood of a point $m$ in $\partial \Mod_{0,n}$. Fix a partition $\Scal=\{I_0,I_1\}\in \Pcal$. Let $n_0=|I_0|$, and $n_1=|I_1|=n-n_0$. Without loss of generality, we can assume that $I_0=\{1,\dots,{n_0}\}$ and $I_1=\{{n_0+1},\dots,n\}$.   From (ii) we know that $D_\Scal$ is isomorphic to $\ol{\Mod}_{0,n_0+1}\times\ol{\Mod}_{0,n_1+1}$. Let $m$ be a point in $D_\Scal$. We will only focus on the case when $m \in D_\Scal$ is a generic point, that is the fiber $C_m$ of $\pi$ over $m$ is a nodal curve having two irreducible components of genus zero intersecting at a simple node.

The normalization of $C_m$ consists of two Riemann surfaces of genus zero denoted by $C_m^0$ and $C_m^1$, where $C_m^0$ (resp. $C^1_m$) contains the marked points $x_1,\dots, x_{n_0}$  (resp.  $x_{n_0+1},\dots,x_n$).  Let $\Sig_0:=\{x_1,\dots,x_{n_0}\}$ and $\Sig_1:=\{x_{n_0+1},\dots,x_n\}$. There are two points $\hat{y}_0 \in C^0_m\smin\Sig_0$ and $\hat{y}_1 \in C^1_m\smin\Sig_1$ that correspond to the unique node of $C_m$. The marked curves $(C^0_m,(\hat{y}_0,x_1,\dots,x_{n_0}))$ and $(C^1_m,(\hat{y}_1,x_{n_0+1},\dots,x_n))$ represent respectively two points $m_0\in \Mod_{0,n_0+1}$ and $m_1\in \Mod_{0,n_1+1}$.

We will now describe how one can embed holomorphically a small disc $\Dis \subset \C$ centered at  $0$ into $\ol{\Mod}_{0,n}$  and transversely to $D_\Scal$ such that $0$ is mapped to $m$.  For this, let us fix the following data:
\begin{itemize}
 \item[.] $U$ is neighborhood of $\hat{y}_0$ in $C^0_m$ such that $U\cap\{x_1,\dots,x_{n_0}\}=\vide$, $F:U \ra \C$ is a coordinate mapping such that $F(\hat{y}_0)=0$,

 \item[.] $V$ is neighborhood of $\hat{y}_1$ in $C^1_m$ such that $V\cap\{x_{n_0+1},\dots,x_n\}=\vide$, $G:V \ra \C$ is a coordinate mapping such that $G(\hat{y}_1)=0$.
\end{itemize}

\noindent Pick a constant $c\in \R_{>0}$ such that the disc $\Dis_c:=\{|z|<c\}\subset \C$ is contained in both $F(U)$ and $G(V)$. For any $t \in \C$ such that $|t|< c^2$, set $C^0_{m,t}:=C^0_m\smin \{p \in U, \, |F(p)| \leq |t|/c\}$, and $C^1_{m,t}:=C^1_m\smin \{q \in V, \, |G(q)|\leq |t|/c\}$. Let $A_t$ denote the annulus $\{|t|/c < |z| < c\} \subset \Dis_c$.
We then define a compact Riemann surface by gluing $C^0_{m,t}$ and $C^1_{m,t}$ via the identification:  $p \in F^{-1}(A_t)$ is identified with $q \in G^{-1}(A_t)$ if and only if  $F(p)G(q)=t$. Let us denote the surface obtained from this construction by $C_{(m,t)}$.

It is easy to see that the marked curve $(C_{(m,t)}, (x_1,\dots,x_n))$ represents a point in $\Mod_{0,n}$.
We thus have  a map $\varphi: \Dis_{c^2}=\{t \in \C, |t| < c^2\} \ra \ol{\Mod}_{0,n}$, which is defined by $\varphi(0)=(C_m,\Sig)$ and $\varphi(t)=(C_{(m,t)},\Sig)$, for $t\neq 0$. This map is well known to be a holomorphic embedding of $\Dis_{c^2}$ into $\ol{\Mod}_{0,n}$.
The construction above is called a {\em plumbing}, and the image of $\Dis_{c^2}$ by $\varphi$ is called a {\em plumbing family} (see \cite[Sec. 2]{Wolpert90}).

Recall that  $m$ is identified with $(m_0,m_1)$ by the isomorphism between $D_\Scal$ and $ \Mod_{0,n_0+1}\times\Mod_{0,n_1+1}$. Therefore, we can identify a neighborhood $\Vcal$ of $m$ in $D_\Scal$ with a product space $\Vcal_0\times\Vcal_1$, where $\Vcal_i$ is a neighborhood of $m_i$ in  $\Mod_{0,n_i+1}$. For any $m'=(m'_0,m'_1) \in \Vcal$, let $C_{m'_0}$ and $C_{m'_1}$ be the curves represented by $m'_0$ and $m'_1$ respectively. On $C_{m'_i}$ we have a distinguished marked point $\hat{y}'_i$ which corresponds to the node of the curve $C_{m'}$  represented by $m'$. We can always identify $C_{m'_i}$ with $\CP^1$ such that $\hat{y}'_i=0$.  In conclusion, we get the following well known result (see \cite[Sec. 2]{Wolpert90}, \cite[Chap. 11]{ACG11}).

\begin{Proposition}\label{prop:neigh:bound:pt}
 Assume that for all $m'=(m'_0,m'_1) \in \Vcal_0\times\Vcal_1$ we have some plumbing data  $(U,V,F,G,c)$ as above, where $F$ and $G$ depend holomorphically on $m'$. Then there exists a system of holomorphic local coordinates at $m$ which identifies a neighborhood $\Ucal$ of $m$ in $\ol{\Mod}_{0,n}$ with  $\Vcal_0\times\Vcal_1\times\Dis_{c^2}$. The point in $\ol{\Mod}_{0,n}$ corresponding to $(m'_0,m'_1,t)$ represents the surface obtained by applying the $t$-plumbing construction to the nodal surface represented by $(m'_0,m'_1)$. In particular, $\Ucal\cap \Mod_{0,n}$ is identified with $\Vcal_0\times\Vcal_1\times\Dis^*_{c^2}$ in those coordinates.
\end{Proposition}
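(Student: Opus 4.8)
The plan is to recognise $\varphi$ as the classifying map of a flat family of stable pointed curves and then to verify, by deformation theory, that this family is universal at the central fibre. As $m'=(m'_0,m'_1)$ ranges over $\Vcal_0\times\Vcal_1$ and $t$ over $\Dis_{c^2}$, the plumbing construction assembles the fibres $(C_{(m',t)},\Sig)$ into a flat family $\pi\colon\mathcal{C}\to\Vcal_0\times\Vcal_1\times\Dis_{c^2}$ of $n$-pointed genus-zero curves. It is a family of \emph{stable} curves: for $t\neq0$ the fibre is a smooth $\CP^1$ with $n\geq3$ distinct marked points, and for $t=0$ the two components of $C_m$ carry $n_0+1\geq3$ and $n_1+1\geq3$ special points respectively (recall $\min\{n_0,n_1\}\geq2$). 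The holomorphic dependence of $F,G$ on $m'$ makes $\mathcal{C}$ a complex manifold and $\pi$ holomorphic, with central fibre $C_m$. Since a stable $n$-pointed genus-zero curve has no non-trivial automorphisms, $\ol{\Mod}_{0,n}$ is a \emph{fine} moduli space, so $\mathcal{C}$ is induced by a unique holomorphic map $\varphi\colon\Vcal_0\times\Vcal_1\times\Dis_{c^2}\to\ol{\Mod}_{0,n}$ with $\varphi(m_0,m_1,0)=m$. It remains to show that $\varphi$ is a biholomorphism onto a neighbourhood $\Ucal$ of $m$.

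Both source and target are smooth of dimension $N=n-3$: indeed $\dim\Vcal_i=n_i-2$, and $(n_0-2)+(n_1-2)+1=n-3$. By the holomorphic inverse function theorem it therefore suffices to show that the differential of $\varphi$ at $(m_0,m_1,0)$ is an isomorphism, i.e. that $\mathcal{C}$ is a versal deformation of $C_m$; equivalently, that its Kodaira--Spencer map is an isomorphism onto $T_m\ol{\Mod}_{0,n}$, the space of first-order deformations of $C_m$ as a stable pointed curve. I would exploit the standard splitting of this space. The local-to-global exact sequence for $\Extm$ of the tangent sheaf twisted down at the marked points gives
$$0\to H^1(C_m,T_{C_m}(-\Sig))\to T_m\ol{\Mod}_{0,n}\to H^0(C_m,\mathcal{E}xt^{1})\to 0,$$
where the subspace records the locally trivial (node-preserving) deformations and the quotient $H^0(C_m,\mathcal{E}xt^{1})$, one-dimensional for a single node, records the smoothing of that node.

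To match the two summands with the two factors of the source, I would first restrict $\varphi$ to $t=0$. For $t=0$ the plumbing reproduces exactly the nodal curve obtained by gluing $C_{m'_0}$ and $C_{m'_1}$ at their distinguished points, so $\varphi|_{t=0}$ coincides with the product identification $\Vcal_0\times\Vcal_1\hookrightarrow D_\Scal\cong\ol{\Mod}_{0,n_0+1}\times\ol{\Mod}_{0,n_1+1}$ of property (ii). Thus $\varphi|_{t=0}$ is already a biholomorphism onto a neighbourhood of $m$ in $D_\Scal$, and its differential maps $T_{(m_0,m_1)}(\Vcal_0\times\Vcal_1)$ isomorphically onto the $(N-1)$-dimensional subspace $T_mD_\Scal=H^1(C_m,T_{C_m}(-\Sig))$ of locally trivial deformations.

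The remaining, and most delicate, point is the $\partial/\partial t$ direction. Near the node the family is cut out by $F(p)G(q)=t$, which is the universal local model $xy=t$ of a node-smoothing; here $t$ is, up to a unit, the canonical coordinate on the one-dimensional smoothing space $\mathcal{E}xt^{1}$ of the node. For $t\neq0$ the node is genuinely smoothed, so $C_{(m',t)}$ is a smooth curve representing a point of $\Mod_{0,n}$; hence $\partial/\partial t$ maps to a non-zero class in the quotient $H^0(C_m,\mathcal{E}xt^{1})$ and is transverse to $T_mD_\Scal$. Combined with the previous step, this shows that the differential of $\varphi$ at $(m_0,m_1,0)$ is an isomorphism, so $\varphi$ is a biholomorphism onto some neighbourhood $\Ucal$ of $m$. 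Since $t$ then serves as a local defining equation for $D_\Scal$, the locus $\{t\neq0\}=\Vcal_0\times\Vcal_1\times\Dis^*_{c^2}$ maps precisely onto $\Ucal\cap\Mod_{0,n}$, as claimed. The main obstacle is this last transversality statement: the clean justification is the deformation-theoretic identification of the plumbing parameter $t$ with the generator of the one-dimensional smoothing factor, which is exactly the versality content of the plumbing theory in \cite{Wolpert90} and \cite{ACG11}.
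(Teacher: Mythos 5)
Your proposal is correct, but a comparison with ``the paper's proof'' is moot: the paper does not prove this proposition at all --- it states it as a well-known result and cites Wolpert and Arbarello--Cornalba--Griffiths. What you have written is essentially a reconstruction of the standard deformation-theoretic argument contained in those references: assemble the plumbed curves into a flat family of stable $n$-pointed genus-zero curves, invoke fineness of $\ol{\Mod}_{0,n}$ (no automorphisms in genus $0$) to get the classifying map $\varphi$, and then check that $d\varphi$ is an isomorphism at the centre by splitting $T_m\ol{\Mod}_{0,n}$ via the local-to-global $\Extm$ sequence into locally trivial deformations (matched by $\varphi|_{t=0}$, which is exactly the product identification of $D_\Scal$) and the one-dimensional node-smoothing space. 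One caution: your sentence ``for $t\neq0$ the node is genuinely smoothed, hence $\partial/\partial t$ maps to a non-zero class in $H^0(C_m,\mathcal{E}xt^1)$'' is not a valid inference on its own --- a family can have smooth fibres for $t\neq 0$ while being tangent to the boundary divisor to first order (e.g.\ after the substitution $t=s^2$), so smoothness of nearby fibres does not yield first-order transversality. The correct justification is the one you give immediately before and after: near the node the family is literally $xy=t$, the universal (versal) smoothing of a node, so the Kodaira--Spencer image of $\partial/\partial t$ is a generator of $\mathcal{E}xt^1$ at the node; this versality statement is the precise content you rightly attribute to \cite{Wolpert90} and \cite[Chap.~11]{ACG11}. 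With that one step anchored on the local model rather than on the smoothness of nearby fibres, your argument is a complete and correct proof of the proposition.
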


\section{Sections of $\Lcal$ near the boundary: generic points} \label{sec:sect:L:near:bdry}
In Section~\ref{sec:def:L}, we defined a holomorphic line bundle $\Lcal$ over ${\Mod}_{0,n}$ by providing local trivializations (see Proposition~\ref{prop:section:hol:etale}).
In this section, we  investigate  $\Lcal$ near the boundary of $\ol{\Mod}_{0,n}$. Our goal is to exhibit holomorphic sections of $\Lcal$ in a neighborhood  of every point $m \in \partial\ol{\Mod}_{0,n}$. Assume that $m$ is a generic point of a divisor $D_\Scal$. Let $\Scal=\{I_0,I_1\}, C^0_m, C^1_m,\Sig_0,\Sig_1, \hat{y}_0,\hat{y}_1$ be as in the previous section.  We will identify $C^0_m$ (resp. $C^1_m$) with $\CP^1$ in such a way that $\hat{y}_0=0$  and $\infty \not\in\Sig_0$ (resp. $\hat{y}_1=0$, and $\infty\not\in\Sig_1$). Set $\hat{\Sig}_i=\Sig_i\sqcup\{\hat{y}_i\}, \; i=0,1$.

Let $\hat{\mu}_0=\sum_{n_0+1\leq s \leq n} \mu_s$, $\hat{\mu}_1=\sum_{1 \leq s \leq n_0} \mu_s$ and $\hat{\alpha}_i=\exp(2\pi\imath\hat{\mu}_i)$.  We assume that $\hat{\mu}_0 < 1$. Denote by $\Lb_i$ the rank one local system on $C^i_m\smin\hat{\Sig}_i$ with  monodromy $\alpha_s$ at $x_s$, and $\hat{\alpha}_i$  at $\hat{y}_i$.  Let $\ee_i$ be a horizontal multivalued section of $\Lb_i$.  Set
$$
\omega_0=\ee_0\cdot z^{-\hat{\mu}_0}\prod_{1\leq s \leq n_0}(z-x_s)^{-\mu_s}dz \quad \text{ and } \quad \omega_1 := \ee_1\cdot z^{-\hat{\mu}_1}\prod_{n_0+1\leq s \leq n}(z-x_s)^{-\mu_s}dz.
$$
Observe that $\omega_i$ is a well defined section in $\Gamma(C^i_m,j^m_*\Omega^1(\Lb_i))$.

We are going to construct a plumbing family starting from $m$ and a section of $\Lcal$ over the corresponding (punctured) family. For this, we first need to fix the plumbing data.

\begin{Lemma}\label{lm:plumb:data}
Let $r$ be a real number different from $-1$. For any holomorphic function $f$ defined on a disc $\Dis$ in $\C$ centered at $0$ and satisfying $f(0)\neq 0$, there exists a coordinate change $z\mapsto w$ preserving $0$ such that
$$
z^{r}f(z)dz=w^rdw
$$
\noindent on a neighborhood of $0$ in $\Dis$, with suitable determinations of $z^r$ and $w^r$.
\end{Lemma}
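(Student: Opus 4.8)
The plan is to produce the coordinate change by antidifferentiation: since we are asking that the $1$-forms $z^r f(z)\,dz$ and $w^r\,dw$ coincide, it is natural to integrate both sides and then extract an appropriate root. First I would expand $f(z)=\sum_{k\geq 0}a_k z^k$ with $a_0=f(0)\neq 0$, fix once and for all a determination of $z^r$ near $0$, and set
$$
W(z):=(r+1)\int_0^z \zeta^r f(\zeta)\,d\zeta .
$$
Integrating term by term produces no logarithm because no exponent $r+k+1$ vanishes; this is guaranteed in the situations of interest, where $r=-\mu_s$ or $r=-\hat\mu_0$ lies in $(-1,0)$ (in general one only needs $r\notin\{-1,-2,\dots\}$, which the hypothesis $r\neq-1$ captures in our range). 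It yields
$$
W(z)=z^{r+1}h(z),\qquad h(z):=\sum_{k\geq 0}\frac{(r+1)a_k}{r+k+1}\,z^k .
$$
The series $h$ converges on all of $\Dis$, since the factors $(r+1)/(r+k+1)$ are bounded and hence its radius of convergence is at least that of $f$; moreover $h(0)=a_0=f(0)\neq 0$.

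Next I would take a root. Because $h(0)\neq 0$, a branch of $h(z)^{1/(r+1)}$ is holomorphic and nonvanishing on a neighborhood of $0$, so
$$
w(z):=z\,h(z)^{1/(r+1)}
$$
is holomorphic with $w(0)=0$ and $w'(0)=h(0)^{1/(r+1)}\neq 0$. Thus $z\mapsto w$ is a biholomorphism near $0$ fixing $0$, i.e. an admissible coordinate change, and by construction $w^{r+1}=z^{r+1}h(z)=W(z)$ once the determinations of the various roots are chosen compatibly.

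It then remains only to differentiate the identity $w^{r+1}=W$. On the one hand $\frac{d}{dz}w^{r+1}=(r+1)\,w^r w'$, and on the other $W'(z)=(r+1)\,z^r f(z)$ by definition of $W$; equating gives $w^r w'=z^r f(z)$, that is $w^r\,dw=z^r f(z)\,dz$ on a neighborhood of $0$, which is the claim. The ``suitable determinations'' of the statement are precisely the branch of $z^r$ fixed for the integral and the branch $w^r:=w^{r+1}/w$ it induces.

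The only genuinely delicate point is the bookkeeping for the multivalued functions $z^r$, $w^r$ and $h^{1/(r+1)}$: I must check that they can be chosen simultaneously on one common (punctured) neighborhood of $0$, so that the chain of equalities above holds for a single coherent set of branches; the convergence of the series defining $h$ and the computation of $w'(0)$ are then routine. An alternative route, should one prefer to avoid writing $W$ explicitly, is to solve the linear ODE $z h'+(r+1)h=(r+1)f$ for the holomorphic factor $h$ directly by matching power-series coefficients (the recursion $(r+k+1)b_k=(r+1)a_k$ is solvable under the same hypothesis on $r$), but the integration argument above is the most transparent.
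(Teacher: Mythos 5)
Your core computation is exactly the paper's: the paper looks for a change of coordinates of the form $w=zh(z)$, reduces to the ODE $g+\frac{1}{r+1}zg'=f$ for $g:=h^{r+1}$, and solves it by the recursion $d_k=\frac{r+1}{r+1+k}c_k$ --- which is precisely the series you write down (your $h$ is the paper's $g$), and precisely the ``alternative route'' you sketch in your last sentence. Both arguments then conclude identically, by extracting an $(r+1)$-th root of a non-vanishing holomorphic function and checking the determinations.

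There is, however, one genuine flaw in your packaging. You define $W(z):=(r+1)\int_0^z\zeta^r f(\zeta)\,d\zeta$, and this improper integral converges at $0$ only when $r>-1$. The lemma is stated for every real $r\neq-1$, and the paper really does apply it with $r<-1$: in Section 4 the plumbing data are obtained by applying the lemma to \emph{both} forms $z^{-\hat\mu_0}f(z)\,dz$ and $z^{-\hat\mu_1}g(z)\,dz$, where $\hat\mu_1=2-\hat\mu_0\in(1,2)$, i.e.\ $r=-\hat\mu_1\in(-2,-1)$; the same happens at every principal node in Section 5, whose weight exceeds $1$. Your parenthetical hedge --- that in the situations of interest $r=-\mu_s$ or $r=-\hat\mu_0$ lies in $(-1,0)$ --- overlooks exactly these applications. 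The repair stays entirely within your proposal: define $W$ not as an integral from $0$ but as the termwise primitive $W(z):=z^{r+1}h(z)$ with $h(z)=\sum_{k\geq0}\frac{(r+1)a_k}{r+k+1}z^k$, which is a primitive of $(r+1)z^rf(z)$ on a slit neighborhood of $0$ for any $r\notin\{-1,-2,-3,\dots\}$ whether or not the integral converges; or simply run the power-series/ODE argument, which is what the paper does. Note finally that both your proof and the paper's implicitly require $r\notin\{-1,-2,-3,\dots\}$, not merely $r\neq-1$: for $r=-2$ and $f(z)=1+z$ the conclusion is actually false, since $z^{-2}(1+z)\,dz$ has non-zero residue at $0$ while $w^{-2}\,dw=d(-w^{-1})$ cannot. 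This is harmless for the paper because all exponents arising there are non-integral, but it is the condition your recursion (and the paper's) genuinely uses.
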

\begin{proof}
Let us fix a determination of $z^r$. We will look for a coordinate change of the form $w =zh(z)$. It suffices to find a holomorphic function $h$ defined on a neighborhood of $0$  such that $h(0)\neq 0$ and
$$
z^rh(z)^{r}\left( h(z)+zh'(z) \right)=z^rf(z) \Leftrightarrow h(z)^{r}(h(z)+zh'(z))=f(z)
$$
where $h(z)^r$ is a determination defined near $h(0)\not=0$.
Setting $g(z):=h^{r+1}(z)$, we must have
$$
g(z)+\frac{1}{r+1}zg'(z)=f(z).
$$
Let $f(z)=\sum_{k\geq 0}c_kz^k$, with $c_0\neq 0$, be the expansion of $f$ at $0$. Assuming that $g$  admits an expansion $g(z)=\sum_{k\geq 0}d_kz^k$, we see that the sequence $(d_k)_{k\geq 0}$ must satisfy
$$
d_k\Bigl(1+\frac{k}{r+1}\Bigr)=c_k \Leftrightarrow d_k=\frac{r+1}{r+1+k}c_k.
$$
\noindent In particular, we see that $d_0=c_0\neq 0$, and the power series $\sum_{k\geq 0}d_kz^k$ has the same convergence radius as $\sum_{k\geq 0}c_kz^k$. Thus $g(z)$ is a well defined holomorphic function on $\Dis$ which satisfies $g(0)\neq0$. It follows that $h(z):=g(z)^{1/(r+1)}$ is  well defined in a neighborhood of $0$ for any choice of a determination. Then we choose the determination $h(z)^r$ in such a way that $h(0)^{r+1} =g(0)$, and if we define $w^r=(zh(z))^r:=z^r h(z)^r$, the lemma is proved.
\end{proof}

Now, choosing a determination for  $\prod_{1\leq s\leq n_0}(z-x_s)^{-\mu_s}$  and $\prod_{n_0+1\leq s\leq n}(z-x_s)^{-\mu_s}$ in a neighborhood of $0$, we then get two holomorphic functions $f$ and $g$ which do not vanish at $0$. Applying Lemma~\ref{lm:plumb:data} to the forms $z^{-\hat{\mu}_0}f(z)dz$ and $z^{-\hat{\mu}_1}g(z)dz$, we see that there exist two holomorphic functions $F: U \ra \C$ and $G: V \ra \C$, where $U$ and $V$ are some neighborhoods of $0$, such that $F(0)=G(0)=0,F'(0)\neq 0, G'(0)\neq 0$ and
\begin{equation}\label{eq:plumb:cond}
z^{-\hat{\mu}_0}f(z)dz= F^{-\hat{\mu}_0}(z)dF(z), \qquad z^{-\hat{\mu}_1}g(z)dz= G^{-\hat{\mu}_1}(z)dG(z)
\end{equation}

Let $c$ be a positive real number such that $\Dis_c$ is contained in both $F(U)$ and $G(V)$. We can now  use the tuple $(F,U,G,V,c)$ to construct the plumbing family associated to $m$.  For any  $t\in \Dis_{c^2}$, let $C_{(m,t)}$ be the $n$-punctured sphere obtained by the construction described in the previous section. Recall that $C_{(m,t)}$  is obtained from  $C^0_{m,t}$ and $C^1_{m,t}$ by the gluing rule  $w_1=t/w_0$ in the coordinates $w_0=F(z)$ and $w_1=G(z)$.  By the definition of $F$ and $G$, the expressions of $\omega_0$ and $\omega_1$ in those local coordinates are respectively
\begin{equation}\label{eq:plumb:gluing:rule}
\omega_0=\ee_1\cdot w_0^{-\hat{\mu}_0}dw_0, \qquad  \omega_1=\ee_2\cdot w_1^{-\hat{\mu}_1}dw_1.
\end{equation}

\begin{Lemma}\label{lm:loc:sys:L:glue}
There exists a rank one local system $\Lb$ on $C_{(m,t)}\smin\Sig$ whose restriction to $C^i_{m,t}\smin\Sig_i$ is $\Lb_i$. We also have a multivalued horizontal section $\ee$ of $\Lb$ whose restriction to $C^i_{m,t}$ is identified with $\ee_i$.
\end{Lemma}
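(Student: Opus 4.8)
The plan is to glue the two local systems $\Lb_0$ and $\Lb_1$ along the plumbing annulus $A_t$ used to construct $C_{(m,t)}$. Recall that $C_{(m,t)}$ is obtained from $C^0_{m,t}$ and $C^1_{m,t}$ by identifying $F^{-1}(A_t)$ with $G^{-1}(A_t)$ through the rule $w_1=t/w_0$, where $w_0=F$ and $w_1=G$. The only thing to check is that the restrictions of $\Lb_0$ and $\Lb_1$ to this overlapping annulus are isomorphic rank one local systems; once this is established, a choice of isomorphism over $A_t$ produces the desired $\Lb$, and the whole construction is governed by the single identity $\hat{\mu}_0+\hat{\mu}_1=2$.

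First I would compute the monodromy of each restricted local system around the core circle of $A_t$. A horizontal section $\ee_0$ of $\Lb_0$, continued counterclockwise around $\hat{y}_0$ in the coordinate $w_0$, is multiplied by $\hat{\alpha}_0$. The gluing rule $w_1=t/w_0$ reverses orientation: as $w_0$ runs counterclockwise around $0$, the coordinate $w_1$ runs clockwise around $\hat{y}_1$, so the same oriented loop multiplies a horizontal section $\ee_1$ of $\Lb_1$ by $\hat{\alpha}_1^{-1}$. Since $\hat{\mu}_0+\hat{\mu}_1=\sum_{s=1}^n\mu_s=2$, we get $\hat{\alpha}_0\hat{\alpha}_1=\exp(2\pi\imath(\hat{\mu}_0+\hat{\mu}_1))=\exp(4\pi\imath)=1$, hence $\hat{\alpha}_1^{-1}=\hat{\alpha}_0$ and the two monodromies coincide. (As a sanity check, this common value $\hat{\alpha}_0=\prod_{s\in I_0}\alpha_s^{-1}$ is exactly the monodromy one expects along the separating curve $A_t$.)

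Consequently $\Lb_0|_{A_t}$ and $\Lb_1|_{A_t}$ are rank one local systems on the annulus with the same monodromy, hence isomorphic, and an isomorphism over the connected annulus $A_t$ is unique up to a scalar. I would then define $\Lb$ on $C_{(m,t)}\smin\Sig$ by gluing $\Lb_0$ on the $C^0_{m,t}$ part to $\Lb_1$ on the $C^1_{m,t}$ part via such an isomorphism, so that by construction $\Lb$ restricts to $\Lb_i$ on $C^i_{m,t}\smin\Sig_i$ and its monodromy at each $x_s$ is $\alpha_s$. To obtain $\ee$, I would normalize the gluing isomorphism so that it carries $\ee_0$ to $\ee_1$ at one point of $A_t$, rescaling $\ee_1$ by a constant if necessary. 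Because both $\ee_0$ and $\ee_1$ transform by the common factor $\hat{\alpha}_0$ around the core loop, this identification is preserved by parallel transport, so the two sections patch to a single multivalued horizontal section $\ee$ of $\Lb$ restricting to $\ee_i$ on each component.

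The only genuinely delicate point is the orientation bookkeeping in the second step: the sign flip coming from $w_1=t/w_0$ is precisely what replaces the naive (and false) constraint $\hat{\alpha}_0=\hat{\alpha}_1$ by the correct one $\hat{\alpha}_0=\hat{\alpha}_1^{-1}$, which holds thanks to $\hat{\mu}_0+\hat{\mu}_1=2$. Everything else is the standard fact that rank one local systems on an annulus are classified by their monodromy and that local systems glue along open overlaps.
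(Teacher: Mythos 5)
Your proof is correct and follows essentially the same route as the paper: both arguments identify $\Lb_0$ and $\Lb_1$ over the gluing annulus by observing that the transition map $w_1=t/w_0$ reverses orientation, so that the relevant monodromies agree precisely because $\hat{\mu}_0+\hat{\mu}_1=2$, and then normalize the gluing so that $\ee_0$ is identified with $\ee_1$. Your orientation bookkeeping and the sanity check $\hat{\alpha}_0=\prod_{s\in I_0}\alpha_s^{-1}$ are both accurate.
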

\begin{proof}
We first remark that $C^i_{m,t}$ is  biholomorphic to a disc with $n_i$ punctures, and the annulus $A_t$ is homotopy equivalent  to the boundary of $C^i_{m,t}$. By definition, the monodromy of $\Lb_i$ along the boundary of $C^i_{m,t}$ (with the counterclockwise orientation) is given by $\exp(-2\pi\imath\hat{\mu}_i)$. Observe that the transition map identifies a circle homotopic to the boundary of $C^0_{m,t}$ with a circle homotopic to the boundary of $C^1_{m,t}$ with the inverse orientation. Since we have
$$
\exp(-2\pi\imath\hat{\mu}_1)=\exp(-2\pi\imath(2-\hat{\mu}_0))=\exp(2\pi\imath\hat{\mu}_0),
$$
\noindent the restriction of $\Lb_0$ on $A_t$ is isomorphic to the restriction of $\Lb_1$.  We can then identify $\Lb_0$ with $\Lb_1$ on $A_t$ by setting $\ee_0\simeq \ee_1$. Therefore, we have a well defined rank one local system $\Lb$ on $C_{(m,t)}$ with the desired monodromies at the punctures and a multivalued horizontal section, denoted by $\ee$, whose restriction to $C^i_{m,t}$ is $\ee_i$.
\end{proof}

\begin{Lemma}\label{lm:1-form:glue}
There exists a unique $\Lb$-valued meromorphic $1$-form $\omega\in \Gamma(C_{(m,t)},j^m_*\Omega^1(\Lb))$, whose restriction to $C^0_{m,t}$ is equal to $\omega_0$. Its restriction to $C^1_{m,t}$ is equal to $-t^{1-\hat{\mu}_0}\omega_1$ for some determination of $t^{\hat{\mu}_0}$.
\end{Lemma}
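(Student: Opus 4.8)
The plan is to construct $\omega$ by gluing $\omega_0$ with a suitable rescaling of $\omega_1$ across the plumbing annulus $A_t$, and to obtain uniqueness from the identity principle. First I would express $\omega_1$ in the coordinate $w_0$ via the gluing rule $w_1=t/w_0$. Since $dw_1=-t\,w_0^{-2}\,dw_0$ and $w_1^{-\hat{\mu}_1}=t^{-\hat{\mu}_1}w_0^{\hat{\mu}_1}$ for compatible determinations, and since the identity $\hat{\mu}_0+\hat{\mu}_1=2$ gives $\hat{\mu}_1-2=-\hat{\mu}_0$ and $1-\hat{\mu}_1=\hat{\mu}_0-1$, one finds
$$
\omega_1=-\ee_1\cdot t^{1-\hat{\mu}_1}w_0^{\hat{\mu}_1-2}\,dw_0=-\ee_1\cdot t^{\hat{\mu}_0-1}w_0^{-\hat{\mu}_0}\,dw_0
$$
on $A_t$. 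Using the identification $\ee_0\simeq\ee_1$ from Lemma~\ref{lm:loc:sys:L:glue}, the right-hand side equals $-t^{\hat{\mu}_0-1}\omega_0$, so that $\omega_0=-t^{1-\hat{\mu}_0}\omega_1$ on the overlap, for the determination of $t^{\hat{\mu}_0}$ forced by the chosen branches of $w_0^{-\hat{\mu}_0}$, $w_1^{-\hat{\mu}_1}$ and by $\ee_0\simeq\ee_1$.

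Granting this, I would set $\omega:=\omega_0$ on $C^0_{m,t}$ and $\omega:=-t^{1-\hat{\mu}_0}\omega_1$ on $C^1_{m,t}$; the computation above shows that these two prescriptions agree on $A_t$, hence define a single global $\Lb$-valued form $\omega$. On $A_t$ one has $w_0\neq 0$, so there $\omega=\ee_0\cdot w_0^{-\hat{\mu}_0}dw_0$ is holomorphic and nowhere vanishing (its multivaluedness being cancelled by $\ee_0$); away from $A_t$, $\omega$ coincides with $\omega_0$, resp.\ $\omega_1$, which lie in $\Gamma(C^i_{m,t},j^m_*\Omega^1(\Lb_i))$ and have valuation $-\mu_s$ at each $x_s$. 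Hence $\omega\in\Gamma(C_{(m,t)},j^m_*\Omega^1(\Lb))$, with valuation exactly $-\mu_s$ at $x_s$, its restriction to $C^0_{m,t}$ equal to $\omega_0$ and its restriction to $C^1_{m,t}$ equal to $-t^{1-\hat{\mu}_0}\omega_1$, as claimed.

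For uniqueness, I would argue that if $\omega'$ is another global section of $j^m_*\Omega^1(\Lb)$ restricting to $\omega_0$ on $C^0_{m,t}$, then $\omega-\omega'$ is a meromorphic section vanishing on the nonempty open set $C^0_{m,t}$ of the connected curve $C_{(m,t)}$, hence vanishes identically by the identity principle. (Alternatively, Proposition~\ref{prop:mer:form:uniq} shows that the space of such forms with valuation $\geq-\mu_s$ is one-dimensional, and the normalization $\omega|_{C^0_{m,t}}=\omega_0$ fixes the scalar.)

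The computation itself is routine; the point requiring care is the bookkeeping of determinations. One must choose the branches of $w_0^{-\hat{\mu}_0}$, $w_1^{-\hat{\mu}_1}$ and $t^{-\hat{\mu}_1}$ so that the scalar relation $w_1^{-\hat{\mu}_1}=t^{-\hat{\mu}_1}w_0^{\hat{\mu}_1}$ holds simultaneously with the identification $\ee_0\simeq\ee_1$ of horizontal sections on $A_t$, so that the matching of $\omega_0$ with $-t^{1-\hat{\mu}_0}\omega_1$ is an equality of genuine single-valued sections of $\Omega^1(\Lb)$ rather than a merely formal one. This is exactly where the identity $\hat{\mu}_0+\hat{\mu}_1=2$ and the orientation-reversing nature of the transition map (recorded in Lemma~\ref{lm:loc:sys:L:glue}) enter, and it also explains why $t^{\hat{\mu}_0}$ can only be pinned down up to the ambiguity stated in the lemma.
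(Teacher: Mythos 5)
Your proof is correct and follows essentially the same route as the paper: the paper performs the identical change-of-variables computation (just written by expressing $\omega_0$ in the coordinate $w_1$ rather than $\omega_1$ in $w_0$), uses $\hat\mu_0+\hat\mu_1=2$ and the identification $\ee_0\simeq\ee_1$ to fix the determination of $t^{\hat\mu_0}$, and disposes of uniqueness by analytic continuation exactly as you do.
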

\begin{proof}
By definition, $\omega_i$ is a section of $\Omega^1(\Lb)$ on $C^i_{m,t}$, meromorphic at the punctures. All we need to show is that
\begin{equation}\label{eq:w1:w2:equal}
\omega_0=-t^{1-\hat{\mu}_0}\omega_1 \hbox{ on $A_t$,}
\end{equation}
the uniqueness being clear by analytic continuation. Using the local coordinates $w_0$ and $w_1$, we have (see \eqref{eq:plumb:gluing:rule})
$$
\omega_0=\ee_0\cdot w_0^{-\hat{\mu}_0}dw_0 \text{ and } \omega_1= \ee_1\cdot w_1^{-\hat{\mu}_1}dw_1
$$
for some choices of the determinations $w_0^{\hat\mu_0}$ and $w_1^{\hat\mu_1}$.
Recall that the changes of trivializations on $A_t$ satisfy  $w_0 \mapsto t/w_1$ and $\ee_0 \mapsto \ee_1$. Thus
$$
\omega_0=\ee_0\cdot w_0^{-\hat{\mu}_0}dw_0= \ee_1\cdot (w_1/t)^{\hat{\mu}_0}(-t/w_1^2)dw_1
= -t^{1-\hat{\mu}_0}\ee_1\cdot w_1^{-\hat{\mu}_1}dw_1=-t^{1-\hat{\mu}_0}\omega_1
$$
where $t^{\hat{\mu}_0}$ is chosen in such a way that $t^{\hat{\mu}_0}=(t/w_1)^{\hat{\mu}_0}w_1^{2-\hat{\mu}_1}$, which is possible since $\hat\mu_0+\hat\mu_1=2$. Observe that as $t$ completes a turn around $0$, the determination of $t^{\hat\mu_0}$ is multiplied by $e^{2\imath\pi\hat\mu_0}$.
\end{proof}

\medskip

Let $\T_0$ be an embedded tree in $C^0_m$ whose vertex set consists of $n_0$ points in  $\hat{\Sig}_0$. Let $\T_1$ be an embedded tree in $C^1_m$ whose vertex set is exactly $\Sig_1$. Let $a_1,\dots,a_{n_0-1}$ denote the edges of $\T_0$, and $b_1,\dots,b_{n_1-1}$ the edges of $\T_1$. Let $\ee'_i$ be an $\ol{\Lb}_i$-multivalued horizontal section on $C^i_m\smin\hat{\Sig}_i$.  By Proposition~\ref{prop:base:hom}, we know that $\{\ee'_0\cdot a_j,\, j=1,\dots,n_0-1\}$ (resp. $\{\ee'_1\cdot b_j,\, j=1,\dots,n_1-1\}$) is a basis of $H_1^{\rm lf}(C^0_m\smin\hat{\Sig}_0,\ol{\Lb}_0)$ (resp. a  basis of $H_1^{\rm lf}(C^1_m\smin\hat{\Sig}_1,\ol{\Lb}_1)$). Set
$$
\eta_j:=\langle [\ee'_0\cdot a_j], [\omega_0]\rangle, \quad \xi_j:=\langle [\ee'_1\cdot b_j], [\omega_1]\rangle.
$$
\noindent Since $[\omega_0]$ and $[\omega_1] $ are not zero (see Proposition~\ref{prop:form:n:trivial}), we have
$$
\eta:=(\eta_1,\dots,\eta_{n_0-1})\neq 0 \in \C^{n_0-1} \text{ and } \xi:= (\xi_1,\dots,\xi_{n_1-1}) \neq 0 \in \C^{n_1-1}.
$$

\begin{Lemma}\label{lm:pairings:glue}
Let $\omega$ be as in Lemma~\ref{lm:1-form:glue}. Then there exists a basis of $H_1^{\rm lf}(C_{(m,t)}\smin\Sig,\ol{\Lb})$ such that the coordinates of $\omega$ in the dual basis are given by $(\eta,-t^{1-\hat{\mu}_0}\xi) \in \C^{n-2}$.
\end{Lemma}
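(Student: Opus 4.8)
The plan is to produce the required basis as the union of the two given tree families, now read as locally finite cycles on the glued surface $C_{(m,t)}$, and then to compute the pairings with $[\omega]$ componentwise. First I would fix a global horizontal multivalued section $\ee'$ of $\ol\Lb$ (the conjugate of the glued local system $\Lb$ furnished by Lemma~\ref{lm:loc:sys:L:glue}) and set $\ee'_i:=\ee'|_{C^i_{m,t}}$, so that the two sections entering the definitions of $\eta_j$ and $\xi_j$ are restrictions of one global section. The crucial structural observation is that the vertex sets of $\T_0$ and $\T_1$ are $\Sig_0$ and $\Sig_1$, both contained in $\Sig=\Sig_0\sqcup\Sig_1$ and both disjoint from the nodal points $\hat y_0,\hat y_1$. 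After plumbing, $\hat y_0$ and $\hat y_1$ are no longer punctures of $C_{(m,t)}$, so it is essential that no edge ends there; since each $a_j$ (resp.\ $b_j$) has its two endpoints in $\Sig_0$ (resp.\ $\Sig_1$), each $\ee'\cdot a_j$ and $\ee'\cdot b_j$ is a genuine locally finite cycle on $C_{(m,t)}\smin\Sig$ with coefficients in $\ol\Lb$. For $|t|$ small the edges, chosen away from $\hat y_0,\hat y_1$, lie in $C^0_{m,t}$ and $C^1_{m,t}$ respectively, with disjoint images.

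Next I would check that these $n-2$ cycles form a basis of $H_1^{\rm lf}(C_{(m,t)}\smin\Sig,\ol\Lb)$. I would apply Proposition~\ref{prop:base:hom} to the $n$-punctured sphere $C_{(m,t)}$ with the partition $\Sig=\Sig_0\sqcup\Sig_1$ and the embedded trees $\T_0,\T_1$: the embedding hypothesis holds because the two trees sit in the two plumbing pieces, and the monodromy condition is satisfied because the total monodromy attached to $\Sig_0$ is a power of $\hat\alpha_0=\exp(2\pi\imath\hat\mu_0)\neq1$ (recall $0<\hat\mu_0<1$). This produces a basis, consistent with $\dim H_1^{\rm lf}=n-2$ from Proposition~\ref{prop:dim:cohom}, and I take its dual basis as the one in the statement.

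Finally, the coordinates of $[\omega]$ in that dual basis are exactly the pairings $\langle[\ee'\cdot a_j],[\omega]\rangle$ and $\langle[\ee'\cdot b_j],[\omega]\rangle$, and the point is that these localize to the two components. To evaluate $\langle[\ee'\cdot a_j],[\omega]\rangle$ one replaces $\ee'\cdot a_j$ by a compactly supported representative built near the punctures of $\Sig_0$, which stays inside $C^0_{m,t}$; since $\omega|_{C^0_{m,t}}=\omega_0$ and $\ee'|_{C^0_{m,t}}=\ee'_0$ by Lemma~\ref{lm:1-form:glue}, this equals $\langle[\ee'_0\cdot a_j],[\omega_0]\rangle=\eta_j$. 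In the same way, since $\omega|_{C^1_{m,t}}=-t^{1-\hat\mu_0}\omega_1$, the cycles $b_j$ give $\langle[\ee'\cdot b_j],[\omega]\rangle=-t^{1-\hat\mu_0}\langle[\ee'_1\cdot b_j],[\omega_1]\rangle=-t^{1-\hat\mu_0}\xi_j$. Hence the coordinate vector of $\omega$ is $(\eta,-t^{1-\hat\mu_0}\xi)\in\C^{n-2}$, as claimed.

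I expect the only genuine subtlety to be the middle step: checking that the two tree families, once transported into $C_{(m,t)}$, still form a basis of its locally finite homology. This is exactly where one must use that the plumbing embeds $\T_0$ and $\T_1$ disjointly in the two pieces and, above all, that the node is excluded from every vertex, so that Proposition~\ref{prop:base:hom} applies verbatim with the partition $\Sig_0\sqcup\Sig_1$. Once this is secured, the pairing computation is a direct consequence of Lemma~\ref{lm:1-form:glue} together with the componentwise nature of the homology pairing, and requires no further analysis near the neck.
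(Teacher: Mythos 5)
Your argument is correct only under an extra hypothesis that the paper's setup does not grant. You state as a ``crucial structural observation'' that the vertex sets of $\T_0$ and $\T_1$ are $\Sig_0$ and $\Sig_1$, both disjoint from the nodal points. In fact, Section~\ref{sec:sect:L:near:bdry} only requires the vertex set of $\T_0$ to consist of $n_0$ points of $\hat\Sig_0=\Sig_0\sqcup\{\hat y_0\}$, so $\hat y_0$ is allowed to be a vertex of $\T_0$ (in which case some point of $\Sig_0$, say $x_{n_0}$, is omitted), and the vector $\eta$ entering the statement is defined by pairing $[\omega_0]$ against the edges of \emph{that} tree. When $\hat y_0$ is a vertex, the edges $a_1,\dots,a_k$ of $\T_0$ ending at $\hat y_0$ do not survive as locally finite cycles on $C_{(m,t)}\smin\Sig$: after plumbing, $\hat y_0$ lies in the neck region and is not a puncture, so your transport of the cycles into $C^0_{m,t}$ fails exactly for those edges, and with it both the basis claim and the identification of the corresponding coordinates with the $\eta_j$. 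This is the case on which the paper's proof spends nearly all of its effort: each edge $a_j$ through the node is replaced by a truncated arc $a_j'$ stopping on the boundary circle of a disc $\Dis_0$ containing the plumbing region, plus a suitable multiple $\varepsilon$ of that circle (this uses $\hat\mu_0\notin\N$), which leaves the pairing $\eta_j$ unchanged; then a new tree with vertex set $\Sig\smin\{x_{n_0}\}$ is built by adjoining edges $c_j$ crossing the neck into $\T_1$, and one checks that the modified cycles still complete to a basis of $H_1^{\rm lf}(C_{(m,t)}\smin\Sig,\ol\Lb)$ with the desired dual coordinates.

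What you do prove is precisely Case 1 of the paper's proof (trees avoiding the node), and there your three steps --- the glued section $\ee'$, Proposition~\ref{prop:base:hom} applied to the partition $\Sig_0\sqcup\Sig_1$ (the monodromy condition holds since $\prod_{s\in I_0}\alpha_s=e^{-2\pi\imath\hat\mu_0}\neq1$), and the localization of the pairings via Lemma~\ref{lm:1-form:glue} --- coincide with the paper's. The Remark following the lemma even concedes that one could have chosen $\T_0$ to avoid $\hat y_0$ and made the proof more direct; but it also explains why the general case is not optional: in the codimension-$r$ induction of Lemma~\ref{lm:pairings:glue:gen}, the trees on intermediate components necessarily have vertices at non-principal nodes, where the subsequent plumbings are performed, so cycles through nodes cannot be avoided there. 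To close the gap you must either handle the case $\hat y_0\in\T_0$ as the paper does, or restate the lemma with the restrictive hypothesis on $\T_0$ --- at the cost of having to redo the argument later in the inductive step.
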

\begin{proof}
We first consider the case when $\T_0$ does not contain $\hat{y}_0$. The tree  $\T_i$ can always be chosen  to be  contained entirely in $C^i_{m,t}$.  It follows that $\{\ee'_0\cdot a_j\}$ and $\{\ee'_1\cdot b_j\}$ can be considered as homology classes in $H_1^{\rm lf}(C_{(m,t)}\smin\Sig, \ol{\Lb})$. Moreover, by Proposition~\ref{prop:base:hom}, the union of those classes makes up a basis of $H_1^{\rm lf}(C_{(m,t)}\smin\Sig,\ol{\Lb})$. Since the restrictions of $\omega$ to $C^0_{m,t}$ and $C^1_{m,t}$ are respectively  $\omega_0$ and $-t^{1-\hat{\mu}_0}\omega_1$, we get
\begin{eqnarray}
\label{eq:pair:coord:1} \langle [\ee'_0\cdot a_j], [\omega] \rangle & = & \langle [\ee'_0\cdot a_j], [\omega_0] \rangle = \eta_j, \, j=1,\dots,n_0-1,\\
\label{eq:pair:coord:2} \langle [\ee'_1\cdot b_j], [\omega] \rangle  & = & \langle [\ee'_1\cdot b_j], -t^{1-\hat{\mu}_0}[\omega_1]\rangle= -t^{1-\hat{\mu}_0}\xi_j, \, j=1,\dots,n_1-1.
\end{eqnarray}
\noindent Thus the lemma is proven in this case.

Consider now  the case where $\T_0$ contains $\hat{y}_0$. Remark that in this case, there is a point in $\Sig_0$, say $x_{n_0}$, which is not contained in $\T_0$. Up to a renumbering, we can assume that the set of edges containing $\hat{y}_0$ as an end is $\{a_j, \; j=1,\dots,k\}$, with $k \leq n_0-1$. We can also assume that $x_j$ is the other end of $a_j$,  for $j=1,\dots,k$.

Recall that the plumbing construction is carried out in a neighborhood $U$ of $\hat{y}_0$. Let $\Dis_0$ be an embedded disc in $C^0_m$ that contains $U$. For $j=1,\dots,k$, let $y_j$ be  the first intersection of $a_j$ with $\partial \Dis_0$, and $a'_j$ be the subarc of $a_j$ from $x_j$ to $y_j$.  Let $a''_j$ denote the boundary of $\Dis_0$ considered as a loop based at $y_j$. Since $\hat{\mu}_0\not\in \N$, there exists a constant $\varepsilon$ such that $[\ee'_0\cdot a_j]=[\ee'_0\cdot a'_j+ \varepsilon\ee'_0\cdot a''_j]$ in $H_1^{\rm lf}(C^0_m\smin\hat{\Sig}_0,\ol{\Lb}_0)$. Therefore,
\begin{equation*}
\eta_j=\langle [\ee'_0\cdot a'_j+ \varepsilon\ee'_0\cdot a''_j],[\omega_0]\rangle =\int_{a'_j}(\ee'_0,\omega_0)+\varepsilon\int_{a''_j}(\ee'_0,\omega_0), \quad j=1,\dots,k.
\end{equation*}

\noindent We construct a new tree $\T$ in $C_{(m,t)}$ from $\T_0$ and $\T_1$ by removing $a_1,\dots,a_k$ from $\T_0$, and adding the edges $c_j$ joining $x_j$ to some vertex of $\T_1$ for $j=1,\dots,k$. Note that the vertex set of $\T$ is $\Sig\smin\{x_{n_0}\}$.  Let $\ee'$ be an $\Lb$-multivalued horizontal section on $C_{(m,t)}\smin\Sig$. Then $\{[\ee'\cdot c_1], \dots, [\ee'\cdot c_k], [\ee'_0\cdot a_{k+1}],\dots, [\ee'_0\cdot a_{n_0-1}], [\ee'_1\cdot b_1],\dots,[\ee'_1\cdot b_{n_1-1}]\}$ is a basis of $H_1^{\rm lf}(C_{(m,t)}\smin\Sig,\ol{\Lb})$ by Proposition~\ref{prop:base:hom}.

For $j=1,\dots,k$, since $a'_j$ and $a''_j$ are entirely contained in $C^0_{m,t}$, we can consider $\ee'_0\cdot a'_j+\varepsilon\ee'_0 \cdot a''_j$ as elements of $H_1^{\rm lf}(C_{(m,t)}\smin\Sig,\ol{\Lb})$. Since the union of $\{a'_j,a''_j ,c_j , b_1,\dots,b_{n_1-1}\}$ is homotopic to the boundary of an open disc disjoint from $\Sig$, we deduce that
$[\ee'_0\cdot a'_j+\varepsilon\ee'_0 \cdot a''_j]$ is a linear combination of $[\ee'\cdot c_j]$ and $[\ee'_1\cdot b_1],\dots,[\ee'_1\cdot b_{n_1-1}]$. Therefore, $\{[\ee'_0\cdot a'_1+\varepsilon\ee'_0 \cdot a''_1],\dots, [\ee'_0\cdot a'_k+\varepsilon\ee'_0 \cdot a''_k], [\ee'_0\cdot a_{k+1}],\dots, [\ee'_0\cdot a_{n_0-1}], [\ee'_1\cdot b_1],\dots,[\ee'_1\cdot b_{n_1-1}] \}$ is also a basis of  $H_1^{\rm lf}(C_{(m,t)}\smin\Sig,\ol{\Lb})$. Since we have
$$
\langle [\ee'_0\cdot a'_j+\varepsilon\ee'_0 \cdot a''_j],[\omega]\rangle =\int_{a'_j}(\ee'_0,\omega)+\varepsilon\int_{a''_j}(\ee'_0,\omega)=\int_{a'_j}(\ee'_0,\omega_0)+\varepsilon\int_{a''_j}(\ee'_0,\omega_0)=\eta_j, \quad j=1,\dots,k,
$$
the coordinates of $[\omega]$ in the dual basis are given by $(\eta,-t^{1-\hat{\mu}_0}\xi)$.
\end{proof}

\begin{Remark} In the proof of Lemma~\ref{lm:pairings:glue}, we could have chosen $\T_0$ such that the node $\hat{y}_0$ is not contained in $\T_0$, and the proof would have been more direct. However, in the next section where we will treat the case where $m$ belongs to several divisors $D_\Scal$, we will be forced to deal with trees containing points corresponding to nodes and we will use a method which is similar to the one above (see Lemma~\ref{lm:pairings:glue:gen}).
\end{Remark}

\begin{Remark}\label{rk:monod:Dehn:tw}
  Let $\gamma$ be a small loop around $0$ in $\Dis_{c^2}$.   The element of the mapping class group ${\rm Mod}_{0,n}$ corresponding to $\gamma$ is a  Dehn twist around a closed curve on $\CP^1$ separating $\Sig_0$ and $\Sig_1$. It can be shown that  the monodromy of the local system $\Hb^\mu$ around such a loop is given by the matrix $\left(\begin{smallmatrix} \Id_{n_0-1} & 0 \\ 0 &  e^{2\pi\imath(1-\hat{\mu}_0)}\Id_{n_1-1} \end{smallmatrix}\right)$ (see \cite[Prop. 9.2]{DeligneMostow86} for the case $n_1=2$).
\end{Remark}


Recall that we can write $m=(m_0,m_1)$, where $m_i\in \Mod_{0,n_i+1}$ represents $(C^i_m,\hat{\Sig}_i)$. Fix a constant $c>0$. There exist some neighborhoods $\Vcal_i$ of $m_i$ in $\Mod_{0,n_i+1}$ such that for any $m'=(m'_0,m'_1)\in \Vcal_0\times\Vcal_1$ and $t\in \Dis^*_{c^2}$, we can apply the same plumbing construction with parameter $t$  as above to the curve $C_{m'}$. Let $C_{(m',t)}$ denote the resulting surface in $\Mod_{0,n}$. By  Proposition~\ref{prop:neigh:bound:pt}, this construction  identifies $\Vcal_0\times\Vcal_1\times \Dis_{c^2}$ with a neighborhood of $m$ in  $\ol{\Mod}_{0,n}$.

Let $C^i_{m'}$ be the component of  $C_{m'}$ containing $\Sig_i$. We define the sections  $\omega_{m'_i} \in \Gamma(C^i_{m'},j^m_*\Omega^1(\Lb_i))$ in the same manner as $\omega_i$.  Since $C_{(m',t)}$ is defined  by the same plumbing construction as $C_{(m,t)}$, by Lemma~\ref{lm:1-form:glue} we get an element $\omega_{(m',t)} \in \Gamma(C_{(m',t)},j^m_*\Omega^1(\Lb))$ constructed from $\omega_{m'_0}$ and $\omega_{m'_1}$. Since $\omega_{(m',t)}$ is a vector in the fiber of $\Lcal$ over $(m',t)$, the assignment   $\Phi: (m',t) \mapsto \omega_{(m',t)}$  is a section of  $\Lcal$ on $\Vcal_0\times\Vcal_1\times\Dis^*_{c^2}$.

\begin{Lemma}\label{lm:sect:L:near:bdry}
 $\Phi$ is a holomorphic section of  $\Lcal$ on $\Vcal_0\times\Vcal_1\times\Dis^*_{c^2}$.
\end{Lemma}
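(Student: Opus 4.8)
The plan is to prove holomorphicity of $\Phi$ by exhibiting it, in the coordinates of Proposition~\ref{prop:neigh:bound:pt}, as a map into the total space of $\Lcal$ given by holomorphic data, and then invoking the étale description of $\Xi_\mu$ from Proposition~\ref{prop:section:hol:etale}. The key observation is that Lemma~\ref{lm:pairings:glue} already computes the coordinates of $\omega_{(m',t)}$ with respect to a locally constant (flat) basis of the dual homology $H_1^{\rm lf}(C_{(m',t)}\smin\Sig,\ol{\Lb})$: they are $(\eta(m'),-t^{1-\hat{\mu}_0}\xi(m'))$, where $\eta(m')=(\eta_1,\dots,\eta_{n_0-1})$ and $\xi(m')=(\xi_1,\dots,\xi_{n_1-1})$ are the pairings of $[\omega_{m'_0}]$ and $[\omega_{m'_1}]$ against the flat bases of the factor curves. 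Since the line bundle $\Lcal$ is, by the discussion after Proposition~\ref{prop:section:hol:etale}, locally the pullback of the tautological bundle of $\CP^{n-3}$ by the section $\Xi_\mu$, giving a holomorphic section of $\Lcal$ amounts to showing that these projective coordinates vary holomorphically in $(m',t)$.

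First I would fix the flat basis of $H_1^{\rm lf}$ provided by Lemma~\ref{lm:pairings:glue}; because the local system $\Hb^\mu$ is flat and $\Vcal_0\times\Vcal_1\times\Dis^*_{c^2}$ is simply connected after passing to the universal cover (or using a trivialization of the flat bundle over the contractible chart $\Vcal_0\times\Vcal_1\times\Dis_{c^2}$ minus the divisor), this basis can be chosen locally constant. Relative to it, the homogeneous coordinates of the class $[\omega_{(m',t)}]\in\mathbb{P}\Hb^\mu_{(m',t)}$ are exactly $(\eta(m'),-t^{1-\hat{\mu}_0}\xi(m'))$. The factor $t^{1-\hat{\mu}_0}$ is multivalued, but this is immaterial in $\CP^{n-3}$ up to the monodromy recorded in Remark~\ref{rk:monod:Dehn:tw}, and in any case $\Phi$ is being defined as a section of the honest line bundle $\Lcal$ whose fiber is the line spanned by $[\omega_{(m',t)}]$, so it suffices that the vector $(\eta(m'),-t^{1-\hat{\mu}_0}\xi(m'))$ itself, with a chosen determination, depends holomorphically on $(m',t)$.

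The substantive step is then to check that $\eta_j(m')$ and $\xi_j(m')$ depend holomorphically on $m'\in\Vcal_0\times\Vcal_1$. Each is a period of the form $\omega_{m'_i}$ over a flat cycle, and by Remark~\ref{rk:cal:by:int} (formula~\eqref{eq:pairing:by:int}) these pairings are, up to the constants $\langle\ee'_i,\ee_i\rangle$, integrals of the type $\int_{\beta}\prod_{x_s}(z-x_s)^{-\mu_s}\,dz$ along fixed paths. As the marked points $x_s$ (equivalently $m'_i$) move holomorphically and the cycles stay in a fixed homology basis, these integrals are classical holomorphic functions of the positions of the punctures — this is precisely the holomorphy underlying the fact that $\Xi_\mu$ is holomorphic (Proposition~\ref{prop:section:hol:etale}). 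I would make this rigorous either by differentiating under the integral sign and verifying the Cauchy-Riemann equations, or more cleanly by noting that $(m'_0,m'_1)\mapsto([\omega_{m'_0}],[\omega_{m'_1}])$ is, factor by factor, the holomorphic section $\Xi_\mu$ of the corresponding $\mathbb{P}\Hb^\mu$-bundles on $\Mod_{0,n_0+1}$ and $\Mod_{0,n_1+1}$, so $\eta$ and $\xi$ are holomorphic lifts of these sections in the flat frame.

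The main obstacle is handling the factor $t^{1-\hat{\mu}_0}$ and confirming that $\Phi$ is a genuine holomorphic section of $\Lcal$ over the punctured parameter space $\Dis^*_{c^2}$ rather than of its pullback to a branched cover. The resolution is that $\Lcal$ is a \emph{line} bundle: rescaling the representative $\omega_{(m',t)}$ by any nonvanishing holomorphic function changes neither the line it spans nor the induced section, so the only issue is whether the chosen vector representative is single-valued. Over $\Dis^*_{c^2}$ one may simply fix one determination of $t^{1-\hat{\mu}_0}$, which is holomorphic and nonvanishing there; the multivaluedness then merely reflects the monodromy of $\Hb^\mu$ described in Remark~\ref{rk:monod:Dehn:tw} and is exactly compatible with the flat structure used to define $\Lcal$. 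Thus $\Phi$ is holomorphic and nonvanishing on $\Vcal_0\times\Vcal_1\times\Dis^*_{c^2}$, as claimed.
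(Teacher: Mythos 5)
Your proposal follows essentially the same route as the paper's own proof: reduce holomorphy of $\Phi$ to holomorphy of the pairings of $\omega_{(m',t)}$ against a flat basis of $H_1^{\rm lf}(C_{(m',t)}\smin\Sig,\ol{\Lb})$, which Lemma~\ref{lm:pairings:glue} identifies as $(\eta,-t^{1-\hat{\mu}_0}\xi)$, with $\eta$ and $\xi$ holomorphic in $m'_0$ and $m'_1$ by the Deligne--Mostow theory (the same holomorphy underlying Proposition~\ref{prop:section:hol:etale}). One wording slip does not affect correctness: no single-valued determination of $t^{1-\hat{\mu}_0}$ exists on all of $\Dis^*_{c^2}$ (since $\hat{\mu}_0\notin\Z$), but this is harmless because, as you yourself then observe, $\omega_{(m',t)}$ is intrinsically defined by extension of $\omega_{m'_0}$ and the multivaluedness of its coordinate expression is exactly cancelled by the monodromy of the flat basis (Remark~\ref{rk:monod:Dehn:tw}), so holomorphy is a local check on simply connected open sets where determinations do exist.
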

\begin{proof}
To see that $\Phi$ is holomorphic section of $\Lcal$, it is enough to show that the pairings of $\omega_{(m',t)}$ with a basis of $H_1^{\rm lf}(C_{(m',t)}\smin\Sig,\ol{\Lb})$ are holomorphic functions of $(m'_0,m'_1,t)$.  Since $\{\eta_j,\, j=1,\dots,n_0-1\}$ and $\{\xi_j,\, j=1,\dots,n_1-1\}$ are holomorphic functions of $m'_0$ and $m'_1$ respectively, the lemma is a direct consequence of Lemma~\ref{lm:pairings:glue}, see also~\cite[Sec. 3]{DeligneMostow86}.
\end{proof}

\section{Sections of $\Lcal$ near the boundary: general case}\label{sec:sect:L:bdry:gen}

\subsection{Principal component}\label{sec:princ:component}
Each point $m$ in  $\ol{\Mod}_{0,n}$ represents a  nodal curve $C_m$ with $n$ marked points $(x_1,\dots,x_n)$. Let   $C^0_m,\dots,C_m^{r}$ be the irreducible components of $C_m$.  The topological type of $C_m$ is encoded by a tree $\Tb$ whose vertex set is in bijection with the set of irreducible components. Each edge of this tree corresponds to a node of $C_m$.

The point $m$ belongs  to the intersection of $r$ boundary divisors, each of them being associated with one of the $r$ nodes $p_1,\dots,p_r$ as follows: splitting a node $p_j$ into two points, we get two connected components $C^{(0)}_{m,p_j}$ and $C_{m,p_j}^{(1)}$ from $C_m$. For $i=0,1$, we define the set $I^j_i\subset \{1,\dots,n\}$ as follows: $s\in I^j_i$ if and only if $x_s\in C^{(i)}_{m,p_j}$. Exchanging $C^{(0)}_{m,p_j}$ and $C_{m,p_j}^{(1)}$ if necessary, we will always assume that $\sum_{s\in I_1^j}\mu_s<1$. Set $\Scal_j=\{I_0^j,I_1^j\}$, we have $m\in\cap_{j=1}^r D_{\Scal_j}$.


Let $\Sig:=\{x_1,\dots,x_n\}$. For each component $C^j_m$,  set $\Sig_j := \Sig\cap C^j_m$. Note that $\Sig_j$  can be empty. We also have on $C^j_m$ some other marked points denoted by $\{y_1,\dots,y_{s_j}\}$ that correspond to nodes of $C_m$. Set $\hat{\Sig}_j:=\Sig_j\sqcup\{y_1,\dots,y_{s_j}\}$. We now assign to every point $y$ in $\hat{\Sig}_j$ a weight $\hat{\mu}(y)$ as follows: if $y=x_s \in \Sig_j$ then $\hat{\mu}(y)=\mu_s$. If $y \in \{y_1,\dots,y_{s_j}\}$, we have a corresponding node of $C_m$. Splitting this node into two points, we get two connected components of $C_m$. Let $\mathring{C}^j_{m,y}$ denote the component that {\bf does not} contain $C^j_m$. The weight associated to $y$ is then
\begin{equation}\label{eq:weight:node}
\hat{\mu}(y):= \sum_{x_s\in \mathring{C}^j_{m,y}}\mu_s.
\end{equation}

Since $y$ corresponds to a node, there exists another marked point $y'$ that is identified with $y$. Let $C^{j'}_m$ be the irreducible component that contains $y'$. Since the genus of $C_m$ is zero, we must have $j'\neq j$. As a consequence, we get
\begin{equation}\label{eq:weight:node:paired}
\hat{\mu}(y')=2-\hat{\mu}(y).
\end{equation}

Let $\hat{\mu}^j$ be the vector recording the weights of the points in $\hat{\Sig}_j$.

\begin{Lemma}\label{lm:exist:princ:comp}\hfill
 \begin{itemize}
  \item[a)] The sum of the weights in $\hat{\mu}^j$ is $2$.

  \item[b)] There exists a unique component $C^j_m$ such that all the weights in $\hat{\mu}^j$ are smaller than $1$.
 \end{itemize}
\end{Lemma}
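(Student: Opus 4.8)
The plan is to translate both assertions into combinatorial statements about the tree $\Tb$ that encodes the topological type of $C_m$, whose vertices are the components $C^j_m$ and whose edges are the nodes.

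For part a), I would fix a component $C^j_m$ and split simultaneously all of its nodes $y_1,\dots,y_{s_j}$. Since $C_m$ has genus zero, $\Tb$ is a tree, so this disconnects $C_m$ into $C^j_m$ itself together with the $s_j$ pairwise disjoint pieces $\mathring{C}^j_{m,y_l}$, one hanging off each node, and every marked point $x_s\notin C^j_m$ lies in exactly one of these pieces. Hence $\sum_{l=1}^{s_j}\hat{\mu}(y_l)=\sum_{l}\sum_{x_s\in\mathring{C}^j_{m,y_l}}\mu_s=\sum_{x_s\notin C^j_m}\mu_s$, while the ordinary marked points contribute $\sum_{x_s\in\Sig_j}\mu_s=\sum_{x_s\in C^j_m}\mu_s$. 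Adding the two and using $\sum_{s=1}^n\mu_s=2$ gives that the total weight of $\hat{\mu}^j$ equals $2$.

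For part b), I first observe that a weight attached to an ordinary point $x_s\in\Sig_j$ equals $\mu_s<1$ by hypothesis, so the condition ``all weights of $\hat{\mu}^j$ are $<1$'' only constrains the nodes: it holds precisely when $\hat{\mu}(y_l)=\sum_{x_s\in\mathring{C}^j_{m,y_l}}\mu_s<1$ for every node $y_l$ of $C^j_m$. I would then reformulate this on $\Tb$. Each edge $e$ separates $\{x_1,\dots,x_n\}$ into two groups whose $\mu$-masses add up to $2$; by the standing convention of Remark~\ref{rk:sum:neq:1} neither mass equals $1$, so exactly one side is \emph{heavy} (mass $>1$). Orient every edge toward its heavy side. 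With this orientation, $\hat{\mu}(y_l)<1$ means exactly that the edge at the vertex $C^j_m$ corresponding to $y_l$ points toward $C^j_m$; thus $C^j_m$ is a principal component if and only if its vertex is a sink.

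It then remains to show that this orientation of $\Tb$ has a unique sink. Existence is immediate: an orientation of a finite tree is acyclic, so following outgoing edges from any starting vertex one never repeats a vertex and must terminate at a sink. The step I expect to demand the most care is uniqueness, which I would prove by contradiction using the mass bound. If $u\neq v$ were two sinks, consider the unique path $u=w_0,w_1,\dots,w_k=v$ in $\Tb$. The first edge $\{w_0,w_1\}$ points toward $u$, so the $v$-side has mass $<1$; the last edge $\{w_{k-1},w_k\}$ points toward $v$, so the $u$-side has mass $<1$. Writing $S_1$ (resp. $S_2$) for the marked points lying on the $v$-side of the first (resp. on the $u$-side of the last) edge, both have $\mu$-mass $<1$. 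Since the branch cut off at $u$ and the branch cut off at $v$ sit at opposite ends of the path and are therefore disjoint, $S_1\cup S_2$ exhausts all marked points, whence $2=\sum_s\mu_s\le \mathrm{mass}(S_1)+\mathrm{mass}(S_2)<2$, a contradiction. This both forces a single sink and pinpoints where the hypothesis that no subset of weights sums to $1$ is essential.
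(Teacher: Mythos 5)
Your proof is correct, but it takes a genuinely different route from the paper's for part b). (Part a) is the same immediate verification in both cases: splitting all nodes of $C^j_m$ partitions the remaining marked points among the hanging pieces $\mathring{C}^j_{m,y_l}$, so the node weights account exactly for $\sum_{x_s\notin C^j_m}\mu_s$.) For b), the paper argues by induction on the number of vertices of $\Tb$: it picks a leaf component; if that leaf is principal, it shows every other component sees a node of weight $\geq \sum_{x_s\in C^j_m}\mu_s>1$, so nothing else can be principal; if the leaf is not principal, it deletes the leaf, replaces the node by a marked point of weight $2-\hat{\mu}(y)<1$, and applies the induction hypothesis to the smaller stable curve. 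You instead give a global, non-inductive argument: orient every edge of $\Tb$ toward its heavy side (well defined precisely because of Remark~\ref{rk:sum:neq:1}), observe that a component is principal if and only if its vertex is a sink, get existence from acyclicity of any orientation of a finite tree, and get uniqueness from the mass-counting contradiction along the path between two alleged sinks. Both proofs use the standing assumption that no partial sum of weights equals $1$ at the same point — the paper needs it to pass from ``not all weights $<1$'' to $\hat{\mu}(y)>1$, you need it to make the heavy-side orientation well defined. What your approach buys is a cleaner structural picture: the orientation makes visible that node weights increase monotonically along paths away from the principal component, which the paper only extracts later (Lemma~\ref{lm:weight:of:comp}~c)); what the paper's induction buys is that the leaf-removal step mirrors the recursive plumbing constructions used in Sections~\ref{sec:sect:L:bdry:gen} and~\ref{sec:metric:coord}, so the same mechanism is reused there.
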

\begin{proof}
The first assertion follows immediately from the definition of the weights at the points corresponding to nodes of $C_m$. We will prove the second assertion by induction on the number of vertices of $\Tb$.

If $\Tb$ has only one vertex, then  b) is trivially true. Suppose that $\Tb$ has $r+1$ vertices, with $r \geq 1$. Pick a component $C^j_m$ corresponding to a leaf of $\Tb$, that is a vertex which is connected to the rest of $\Tb$ by only one edge. Suppose that $C^j_m$ satisfies the property of the lemma ({\em i.e.} all the weights in $\hat{\mu}^j$ are smaller than one). Let us show that $C^j_m$ is the unique component satisfying this condition. Let $y$ be the unique point in $\hat{\Sig}_j$ that corresponds to a node in $C_m$. Since the weight $\hat{\mu}(y)$ is less than $1$, from a) we have $\sum_{x_s\in C^j_m}\mu_s >1$.

Consider  another irreducible component $C^k_m$ of $C_m$. There is a point $y_k \in C^k_m$ which corresponds to the node separating $C^k_m$ from $C^j_m$. Let $\mathring{C}^{k}_{m,y_k}$ be the component containing $C^j_m$ which is obtained after splitting $y_k$ into two points.  By definition, the weight of $y_k$ is
$$
\hat{\mu}(y_k)=\sum_{x_s\in \mathring{C}^{k}_{m,y_k}} \mu_s \geq \sum_{x_s\in C^j_m}\mu_s >1.
$$
Therefore, $C^k_m$ cannot satisfy the condition in b). We can then conclude that $C^j_m$ is the unique component that satisfies this condition.

Assume now that $C^j_m$ does not satisfy the condition of the lemma, which means that $\hat{\mu}(y)>1$. Let $C^k_m$ be the unique component of $C_m$ that is adjacent to $C^j_m$, and $y'$ be the point in $C^k_m$ that is identified with $y$. Note that the  weight of $y'$ is given by
$$
\hat{\mu}(y')=\sum_{x_s\in C^j_m}\mu_s =2-\hat{\mu}(y) <1.
$$
Set $\Sig':=(\Sig \setminus \Sig_j)\sqcup\{y'\}$. We see that each point in $\Sig'$ has a weight strictly smaller than $1$, and the total weight of the points in $\Sig'$ is $2$. Let $C'_m$ be the stable curve obtained by removing $C^j_m$ from $C_m$. Since the tree corresponding to $C'_m$ has a vertex less than $\Tb$, we can apply the induction hypothesis to conclude that there is a unique component of $C'_m$ that satisfies the desired condition.
\end{proof}

\begin{Definition}\label{def:princ:comp}
We  call the unique component $C^j_m$ that satisfies the condition that all the weights in $\hat{\mu}^j$ are smaller than $1$ the {\em $\mu$-principal component} of $C_m$.
\end{Definition}

In what follows, we will always assume that $C^0_m$ is the principal component of $C_m$. Let $\vv_j$ be the vertex of $\Tb$ corresponding to $C^j_m$. We consider $\vv_0$ as the root of $\Tb$, and set the length of every edge of $\Tb$ to be one. We define the level $\Lev_j$ of the component  $C^j_m$ to be the distance in $\Tb$ from $\vv_j$ to $\vv_0$. Observe that we can always choose a numbering of the components of $C_m$ such that $\Lev_j \leq \Lev_{j+1}$ for $j=0,\dots,r-1$.

If $C^j_m$ is not the principal component of $C_m$, then there is a unique point $\hat{y}_j \in \hat{\Sig}_j$ which corresponds to the node separating $C^j_m$ from $C^0_m$. Remark that we have  $\hat{\mu}(\hat{y}_j)>1$, and $\hat{y}_j$ is the unique point in $\hat{\Sig}_j$ whose weight is greater than $1$. We will call $\hat{y}_j$ the {\em principal node} of $C^j_m$, and define the {\em weight} of $C^j_m$ to be   $\nu_j=\hat{\mu}(\hat{y}_j)-1$. The following lemma provides some basic properties of the weights $\nu_j$. Its proof is straightforward from the definition of $\hat{\mu}$ and the fact that $\Tb$ is a tree.

\begin{Lemma}\label{lm:weight:of:comp} Let $C^j$ be an irreducible component of $C_m$ which is not the principal one. Then we have
\begin{itemize}
 \item[a)]  $0< \nu_j < 1$.

 \item[b)]  Let $\hat{C}^j_m$ be the component containing $C^j_m$ which is obtained by splitting $C_m$ at the principal node of $C^j_m$, that is the node separating $C^j_m$ from $C^0_m$. Then we have
 $$
 \nu_j=1-\sum_{x_s\in \hat{C}^j_m} \mu_s
 $$

 \item[c)] If $\vv_{k}$ is a vertex in the path from $\vv_0$ to $\vv_j$ and $k\neq j$, then $\nu_k < \nu_j$.
\end{itemize}
\end{Lemma}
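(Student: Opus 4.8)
The plan is to prove b) first, as parts a) and c) then follow from it by elementary bookkeeping on the tree $\Tb$. For b) I would simply unwind the definitions. By construction the principal node $\hat{y}_j$ is the node separating $C^j_m$ from the principal component $C^0_m$, so splitting $C_m$ at $\hat{y}_j$ yields exactly two connected pieces: the one containing $C^0_m$, which is $\mathring{C}^j_{m,\hat{y}_j}$, and the one containing $C^j_m$, which is $\hat{C}^j_m$. The defining formula~\eqref{eq:weight:node} gives $\hat{\mu}(\hat{y}_j)=\sum_{x_s\in\mathring{C}^j_{m,\hat{y}_j}}\mu_s$, and since these two pieces partition the marked points in $\Sig$ while $\sum_{s=1}^n\mu_s=2$, we obtain $\sum_{x_s\in\hat{C}^j_m}\mu_s=2-\hat{\mu}(\hat{y}_j)$. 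Substituting $\hat{\mu}(\hat{y}_j)=\nu_j+1$ then yields $\nu_j=1-\sum_{x_s\in\hat{C}^j_m}\mu_s$, which is exactly b).

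For a), the lower bound is immediate: it was already observed before the statement that $\hat{\mu}(\hat{y}_j)>1$, hence $\nu_j=\hat{\mu}(\hat{y}_j)-1>0$. For the upper bound I would combine b) with the stability of $C_m$: the piece $\hat{C}^j_m$ is a nonempty connected subcurve, and any component of $\hat{C}^j_m$ that is terminal in the subtree rooted at $\vv_j$ is a leaf of $\Tb$, hence (being a genus zero component carrying a single node) must carry at least two marked points in order to be stable. Thus $\hat{C}^j_m$ contains at least one marked point, so $\sum_{x_s\in\hat{C}^j_m}\mu_s>0$ and $\nu_j=1-\sum_{x_s\in\hat{C}^j_m}\mu_s<1$.

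For c), the key point is a nested inclusion of subcurves together with a \emph{strict} drop of weight. If $\vv_k$ lies on the path from $\vv_0$ to $\vv_j$ with $k\neq j$, then $\vv_k$ is a proper ancestor of $\vv_j$, so $\hat{y}_k$ sits strictly closer to the root than $\hat{y}_j$; consequently $\hat{C}^j_m\subsetneq\hat{C}^k_m$, and $C^k_m$ lies in the difference $R:=\hat{C}^k_m\smin\hat{C}^j_m$. By b) it is enough to show $\sum_{x_s\in R}\mu_s>0$, i.e.\ that $R$ contains a marked point. I would reduce to the case where $\vv_k$ is the parent of $\vv_j$: proving the strict drop $\nu_k<\nu_j$ in that case and chaining along the path $\hat{C}^j_m=\hat{C}^{i_m}_m\subsetneq\dots\subsetneq\hat{C}^{i_0}_m=\hat{C}^k_m$ gives the general statement. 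In the parent case the node joining $C^k_m$ to $C^j_m$ is one of the special points of $C^k_m$, and since $C^k_m$ is stable it has at least three special points: its principal node $\hat{y}_k$, the node toward $C^j_m$, and a third one. If the third is a marked point it already lies in $R$; otherwise it is a node opening onto a further subtree hanging off $C^k_m$, whose leaves again carry marked points by stability, all of which lie in $R$. Either way $R$ contains a marked point. The main obstacle is exactly this last step --- securing the \emph{strict} inequality rather than mere monotonicity --- which is the one place where stability of the nodal curve, and not just the combinatorics of $\Tb$, is genuinely needed.
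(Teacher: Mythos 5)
Your proof is correct and is exactly the argument the paper has in mind: the paper in fact gives no proof at all, merely declaring the lemma ``straightforward from the definition of $\hat{\mu}$ and the fact that $\Tb$ is a tree,'' and your write-up is the natural fleshing-out of that claim (b) by unwinding \eqref{eq:weight:node} and the normalization $\sum_s\mu_s=2$, then a) and c) from the nesting of subtrees). The one substantive point you add is worth keeping in mind: the upper bound $\nu_j<1$ and the strictness in c) genuinely require stability of the components (each carries at least three special points, so every hanging subtree contains marked points), a hypothesis the paper's one-line justification leaves implicit.
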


\begin{Remark}\label{rk:pair:node}
 Every node of $C_m$ is the principal node of a unique component. This is because each node of $C_m$ corresponds to a pair of points $\{y,y'\}$ that are contained in two different components, and we have $\hat{\mu}(y)+\hat{\mu}(y')=2$ (cf. \eqref{eq:weight:node:paired}).
\end{Remark}

\subsection{Construction of sections of $\Lcal$ in a neighborhood of $m$}\label{sec:const:sect:L:gen}
Set $k_j:=|\hat{\Sig}_j|, \,  j=0,\dots,r$. For each $j\in \{0,\dots,r\}$, let $\Lb_j$ be a rank one local system on $C^j_m\smin\hat{\Sig}_j$ with monodromy $\exp(2\pi\imath\hat{\mu}(y))$ at any point $y\in \hat{\Sig}_j$. We will  fix an $\Lb_j$-multivalued horizontal section $\ee_j$, and a meromorphic  section $\omega_j$ of $\Gamma(C^j_m,j^m_*\Omega^1(\Lb_j))$ with valuation $-\hat{\mu}(y)$ at every point $y\in \hat{\Sig}_j$.

Let $\{p,q\}$ be a pair of points in the normalization of $C_m$ that correspond to a node, and $C^j_m$ and $C^{j'}_m$ be respectively the components that contain  $p$ and $q$.  Using Lemma~\ref{lm:plumb:data}, we can find some neighborhoods $U$ of $p$, and $V$ of $q$, together with local coordinates $F$ on $U$, $G$ on $V$   such that
$$
\omega_j=\ee_j \cdot F^{-\hat{\mu}(p)}dF, \quad \omega_{j'}=\ee_{j'}\cdot G^{-\hat{\mu}(q)}dG.
$$
Choose a constant $c>0$ small enough such that for any $t=(t_1,\dots,t_r)\in (\Dis_{c^2})^r$, the plumbing construction with plumbing data $(F,U,G,V)$ and parameter $t_i$ as above  can be carried out at all the nodes simultaneously. For any $j \in \{1,\dots,r\}$, we can assume that $t_j$ is the plumbing parameter at the principal node of $C^j_m$.

Let $C_{(m,t)}$ denote the resulting surface in $\Mod_{0,n}$.  On $C_{(m,t)}$ we have $n$ marked points $(x_1,\dots,x_n)$ with associated weights $(\mu_1,\dots,\mu_n)$, we will also denote by $\Sig$ this finite subset of $C_{(m,t)}$. Let $U^j_m$ be an open subset of $C^j_m$ containing all the points in $\Sig_j\subset \Sig$ and disjoint from the regions affected by the plumbing construction. We can consider $U^j_m$ as an open subset of $C_{(m,t)}$. As usual, let $\Lb$ be a rank one local system on $C_{(m,t)}\smin\Sig$, with monodromy $\exp(2\pi\imath\mu_s)$ at $x_s$.

\begin{Lemma}\label{lm:1-form:glue:general}
Let $\jj$ be the natural embedding of $C_{(m,t)}\smin\Sig$ into $C_{(m,t)}$.  Then there exists a unique element $\omega$ of $\Gamma(C_{(m,t)},\jj^m_*\Omega^1(\Lb))$ such that
  \begin{itemize}
  \item[$\bullet$] the restriction of $\omega$ to $U^0_m$ is equal to $\omega_0$,
  \item[$\bullet$] for $j=1,\dots,r$, the restriction of $\omega$    to $U^j_m$ is equal to $P_j(t)\omega_j$, where $P_j(t)$ is a function of $t$ which is defined as follows:  let  $0<i_1<\dots<i_{L_j}=j$ be the indices of the vertices of $\Tb$ that are contained in the unique path from $\vv_0$ to $\vv_j$, and $\nu_{i_s}$ is the weight  of $C^{i_s}_m$, then
  $$
  P_j(t_1,\dots,t_r)=(-1)^{L_j}\prod_{s=1}^{L_j}t_{i_s}^{\nu_{i_s}}.
  $$
  \end{itemize}
\end{Lemma}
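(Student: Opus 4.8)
The plan is to build $\omega$ by gluing the locally defined forms $P_j(t)\omega_j$ across the plumbing annuli, and to read off both the gluing compatibility and the explicit shape of $P_j(t)$ from a single-node computation identical in spirit to the one in Lemma~\ref{lm:1-form:glue}. Uniqueness is immediate: since $C_{(m,t)}$ is connected and $\omega$ is meromorphic, $\omega$ is completely determined by its restriction to the open set $U^0_m$, which is prescribed to be $\omega_0$; hence at most one such $\omega$ can exist, by analytic continuation. So the whole content lies in producing a \emph{global} section with the stated restrictions, that is, in checking that the forms $P_j(t)\omega_j$, $j=0,\dots,r$, agree on every annulus coming from a node.

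First I would treat one node at a time. Let $C^j_m$ (the child, further from the root) be glued to $C^{j'}_m$ (its parent, on the path from $\vv_j$ to $\vv_0$) along the node $\hat{y}_j$, which is the principal node of $C^j_m$; the plumbing parameter there is $t_j$. Writing $\omega_j=\ee_j\cdot w_0^{-\hat{\mu}(\hat{y}_j)}dw_0$ and $\omega_{j'}=\ee_{j'}\cdot w_1^{-\hat{\mu}(q)}dw_1$ in the plumbing coordinates, where $q$ is the point of $C^{j'}_m$ identified with $\hat{y}_j$ and the gluing rule reads $w_0w_1=t_j$, $\ee_j\simeq\ee_{j'}$, I would substitute $w_0=t_j/w_1$ exactly as in the proof of Lemma~\ref{lm:1-form:glue}. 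Using $\hat{\mu}(q)=2-\hat{\mu}(\hat{y}_j)$ together with $\hat{\mu}(\hat{y}_j)=\nu_j+1$ (Lemma~\ref{lm:weight:of:comp}), this yields on the annulus the relation
$$\omega_j=-t_j^{\,1-\hat{\mu}(\hat{y}_j)}\,\omega_{j'}=-t_j^{-\nu_j}\,\omega_{j'},$$
with a determination of $t_j^{\nu_j}$ fixed as in Lemma~\ref{lm:1-form:glue}. Consequently the candidate restrictions $P_j(t)\omega_j$ and $P_{j'}(t)\omega_{j'}$ agree across this annulus if and only if
$$P_j(t)=-t_j^{\nu_j}\,P_{j'}(t).$$

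It then remains to propagate this recursion through the tree. Since $\Tb$ is a tree rooted at $\vv_0$, every vertex $\vv_j$ has a unique parent and a unique path $\vv_0,\vv_{i_1},\dots,\vv_{i_{L_j}}=\vv_j$ to the root, so starting from $P_0(t)=1$ (the prescribed restriction to $U^0_m$) the relation above determines every $P_j(t)$ with no consistency obstruction — the absence of cycles is exactly what prevents over-determination. Telescoping along this path gives
$$P_j(t)=\prod_{s=1}^{L_j}\bigl(-t_{i_s}^{\nu_{i_s}}\bigr)=(-1)^{L_j}\prod_{s=1}^{L_j}t_{i_s}^{\nu_{i_s}},$$
which is the asserted formula. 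With these coefficients the forms $P_j(t)\omega_j$ glue to a single $\omega$ on $C_{(m,t)}$; on each $C^j_{m,t}$ it equals $P_j(t)\omega_j$, hence is holomorphic and invertible away from $\Sig_j$ and has valuation $-\mu_s$ at each $x_s\in\Sig_j$, while on the annuli it is holomorphic since the $\omega_j$ are invertible there. Thus $\omega\in\Gamma(C_{(m,t)},\jj^m_*\Omega^1(\Lb))$, with the required restrictions. I expect the only delicate points to be purely bookkeeping: fixing compatible determinations of the multivalued factors $t_{i_s}^{\nu_{i_s}}$ at the successive nodes so that the product $P_j(t)$ is an unambiguous branch (as in Lemma~\ref{lm:1-form:glue}), and orienting each annulus consistently with the child/parent convention so that the sign $-1$ appears exactly once per edge. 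The conceptual input — the single-node identity and the tree structure — is otherwise a direct extension of the two-component case.
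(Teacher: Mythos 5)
Your proof is correct and takes essentially the same route as the paper: the paper's proof is precisely an induction that plumbs one component at a time in order of increasing level and applies Lemma~\ref{lm:1-form:glue} at each principal node, which is exactly your edge-wise recursion $P_j(t)=-t_j^{\nu_j}P_{j'}(t)$ (with $P_0=1$) telescoped along the unique path from $\vv_0$ to $\vv_j$. The only cosmetic difference is that you re-derive the single-node annulus identity $\omega_j=-t_j^{-\nu_j}\omega_{j'}$ directly instead of invoking Lemma~\ref{lm:1-form:glue} as a black box, and you phrase the induction as a simultaneous gluing-compatibility check over the edges of the tree.
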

\begin{proof}
Let $\tilde{C}^j_m$ be the subsurface of $C_m$ which is the union of the components $C^0_m,\dots,C^j_m$.  Given $t=(t_1,\dots,t_r)$, we define $\tilde{C}^j_{(m,t_1,\dots,t_j)}$ from $\tilde{C}^j_m$ by applying successively  the plumbing constructions with parameter $t_i$ at the principal node of $C^i_m$, for $i=1,\dots,j$. Note that $\tilde{C}^r_{(m,t_1,\dots,t_r)}=C_{(m,t)}$. By induction, this lemma is a direct consequence of Lemma~\ref{lm:1-form:glue}.
\end{proof}

Let us denote by $\omega_{(m,t)}$ the $\Lb$-valued meromorphic one form given by Lemma~\ref{lm:1-form:glue:general}. By construction, $\omega_{(m,t)}$ has valuation $-\mu_s$ at $x_s$, thus it is an element of the fiber of $\Lcal$ over $C_{(m,t)}$. We would like now to show that the assignment $(m,t) \mapsto \omega_{(m,t)}$ is a holomorphic section of $\Lcal$ on $\Ucal\cap{\Mod}_{0,n}$, where $\Ucal$ is a neighborhood of $m$ in $\ol{\Mod}_{0,n}$.

We first specify an appropriate basis of $H_1^{\rm lf}(C_{(m,t)}\smin\Sig,\ol{\Lb})$.  Let $\T_0$ be an embedded topological tree in $C^0_m$, whose vertex set is $\hat{\Sig}_0$ minus one point. For $j=1,\dots,r$, let $\T_j$ be a an embedded topological tree in $C^j_m$ whose vertex set is $\hat{\Sig}_j$ minus the principal node. Let $a^j_i, \, i=1,\dots,k_j-2$, denote the edges of $\T_j$. Fix an $\ol{\Lb}_j$-multivalued horizontal section $\ee'_j$ on $C^j_m\smin\hat{\Sig}_j$. By Proposition~\ref{prop:base:hom}, the family $\{[\ee'_j\cdot a^j_i], \, i=1,\dots,k_j-2\}$ is a basis of $H_1^{\rm lf}(C^j_m\smin\hat{\Sig}_j,\ol{\Lb}_j)$. Set
$$
\xi^{(j)}_i:= \langle [\ee'_j\cdot a^j_i],[\omega_j]\rangle= \int_{a^j_i} (\ee'_j,\omega_j), \, i=1,\dots,k_j-2.
$$
Let $\xi^{(j)}$ denote the vector $(\xi^{(j)}_1,\dots,\xi^{(j)}_{k_j-2})$.  We have

\begin{Lemma}\label{lm:pairings:glue:gen}
 Let $\omega$ and $P_j, \, j=1,\dots,r$, be as in Lemma~\ref{lm:1-form:glue:general}. Then there exists a basis of $H_1^{\rm lf}(C_{(m,t)}\smin\Sig,\ol{\Lb})$ such that the coordinates of $[\omega]$ in the dual basis are given by $(\xi^{(0)},P_1(t)\xi^{(1)},\dots,P_r(t)\xi^{(r)})\in \C^{n-2}$.
\end{Lemma}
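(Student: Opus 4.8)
The plan is to generalize the argument of Lemma~\ref{lm:pairings:glue} from a single node to the whole configuration of $r$ nodes, the new input being the description of $[\omega]$ given by Lemma~\ref{lm:1-form:glue:general}. The goal is to assemble the trees $\T_0,\dots,\T_r$ into a single embedded tree $\T$ in the \emph{smooth} curve $C_{(m,t)}$ whose vertex set is $\Sig$ with one marked point removed; then $\{[\ee'\cdot a],\ a\text{ an edge of }\T\}$ is automatically a basis of $H_1^{\rm lf}(C_{(m,t)}\smin\Sig,\ol{\Lb})$ by Proposition~\ref{prop:base:hom}, and it remains to pair each edge with $[\omega]$. A count confirms the rank: $\sum_{j=0}^r(k_j-2)=n-2$, since $\sum_j k_j=n+2r$ (each node contributes two points) while we discard $2(r+1)$ edges' worth of vertices.

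First I would assemble the global tree. Edges of $\T_j$ lying in the interior region $U^j_m$, away from every plumbing neck, are regarded directly as cycles on $C_{(m,t)}$. The difficulty is with an edge $a^j_i$ of $\T_j$ one of whose endpoints is a \emph{non-principal} node $y$ of $C^j_m$, joining $C^j_m$ to a deeper component $C^{j'}_m$ with $\Lev_{j'}=\Lev_j+1$: after plumbing, $y$ sits inside the neck rather than being a puncture, so $a^j_i$ must be modified. Exactly as in Lemma~\ref{lm:pairings:glue}, since $\hat{\mu}(y)\notin\Z$ there is a constant $\veps$ with $[\ee'_j\cdot a^j_i]=[\ee'_j\cdot a'+\veps\,\ee'_j\cdot a'']$ in $H_1^{\rm lf}(C^j_m\smin\hat{\Sig}_j,\ol{\Lb}_j)$, where $a'$ is the subarc of $a^j_i$ reaching a small circle $a''$ around $y$. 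The loose end is pushed through the neck and joined to a vertex of $\T_{j'}$ by an edge $c$. Performing this surgery at every node (choosing $\T_0$ so that the omitted point is a marked point, to keep $\T$ connected) turns $\bigcup_j\T_j$ into a tree whose vertices are the marked points $\Sig$ minus one point, since every node point is absorbed into a neck.

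Next I would compute the pairings. By Lemma~\ref{lm:1-form:glue:general}, $\omega$ restricts to $P_j(t)\,\omega_j$ on $U^j_m$ (with $P_0\equiv 1$). For an edge contained in $U^j_m$ this gives immediately
\[
\langle[\ee'_j\cdot a^j_i],[\omega]\rangle=\langle[\ee'_j\cdot a^j_i],[P_j(t)\omega_j]\rangle=P_j(t)\,\xi^{(j)}_i .
\]
For a surgered edge of $C^j_m$, the modified cycle $\ee'_j\cdot a'+\veps\,\ee'_j\cdot a''$ still lies where $\omega=P_j(t)\omega_j$, so the surgery \emph{preserves} the pairing and the value is again $P_j(t)\,\xi^{(j)}_i$. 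As in Lemma~\ref{lm:pairings:glue}, the disc bounded by $a'\cup a''\cup c$ together with edges of $\T_{j'}$ is disjoint from $\Sig$ and its boundary is null-homologous; this exhibits each modified class as a combination of the reconnection class $[\ee'\cdot c]$ and deeper-level classes, through a change of coordinates that is invertible (triangular with respect to increasing $\Lev_j$). Hence the modified classes again form a basis, in whose dual the coordinates of $[\omega]$ are precisely $(\xi^{(0)},P_1(t)\xi^{(1)},\dots,P_r(t)\xi^{(r)})\in\C^{n-2}$.

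The main obstacle will be the global bookkeeping: one must check that the surgeries at different nodes are compatible, i.e. that the chain of disc-bounding relations across successive levels of $\Tb$ assembles into an invertible rather than a degenerate change of basis, and that reconnecting through a neck never disturbs an already-computed shallower coefficient. A clean way to organize this is induction on $r$ using the successive-plumbing description from Lemma~\ref{lm:1-form:glue:general}: at the step passing from $\tilde{C}^{j-1}_{(m,t)}$ to $\tilde{C}^{j}_{(m,t)}$ by plumbing $C^j_m$ at its principal node, one applies Lemma~\ref{lm:pairings:glue} with the $\tilde{C}^{j-1}$ side in the role of the principal component (the node weight seen from there being $1-\nu_j<1$) and with the outer block weighted by the coefficient $P_p$ carried by $\omega$ on the parent component $C^p_m$. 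This leaves $(\xi^{(0)},P_1\xi^{(1)},\dots,P_{j-1}\xi^{(j-1)})$ untouched and produces the new block $P_j(t)\,\xi^{(j)}$ with $P_j=-P_p\,t_j^{\nu_j}$; the only point then needing the surgery above is that the inductive basis may contain edges meeting the node being plumbed.
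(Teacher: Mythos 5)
Your proposal is correct and, in its concluding paragraph, is exactly the paper's proof: the paper disposes of the lemma in one sentence by induction on the successive plumbings $\tilde{C}^{j-1}_{(m,t_1,\dots,t_{j-1})}\leadsto\tilde{C}^{j}_{(m,t_1,\dots,t_{j})}$, invoking Lemma~\ref{lm:pairings:glue} (whose Case~2 surgery is precisely your $a'+\veps a''$ modification) together with Lemma~\ref{lm:1-form:glue:general} at each step. Your first paragraphs merely unroll that induction into a global tree assembly, which matches the method the paper itself announces in the remark following Lemma~\ref{lm:pairings:glue}.
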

\begin{proof}
 Let $\tilde{C}^j_m$ and $\tilde{C}^j_{(m,t_1,\dots,t_j)}$ be as in the proof of Lemma~\ref{lm:1-form:glue:general}. Recall that $C_{(m,t)}=\tilde{C}^r_{(m,t_1,\dots,t_r)}$, and $\tilde{C}^j_{(m,t_1,\dots,t_j)}$ is obtained from $\tilde{C}^{j-1}_{(m,t_1,\dots,t_{j-1})}$ and $C^j_m$ by the plumbing construction at the principal node of $C^j_m$. The lemma then follows from Lemma~\ref{lm:pairings:glue} and Lemma~\ref{lm:1-form:glue:general} by induction.
\end{proof}

Each pair $(C^j_m,\hat{\Sig}_j)$ represents a point $m_j$ in $\Mod_{0,k_j}$. Hence the point $m$ is contained in a stratum of $\ol{\Mod}_{0,n}$ which is isomorphic to $\Mod_{0,k_0}\times\dots\times\Mod_{0,k_{r}}$. Let $\Vcal_j$ be a neighborhood of $m_j$ in $\Mod_{0,k_j}$ and set $\Vcal:=\Vcal_0\times\dots\times\Vcal_r$. Let $\Lcal_j$ denote the line bundle over $\Mod_{0,k_j}$ whose fiber over $m_j$ is $\C\cdot[\omega_j] \subset H^1(C^j_m\smin\hat{\Sig}_j,\Lb_j)$. We  extend $\omega_j$ to a holomorphic section of $\Lcal_j$ on $\Vcal_j$. Since the plumbing data $(F,U,G,V)$ depend analytically on $(m_0,\dots,m_r)$, the plumbing construction $(m,t) \mapsto C_{(m,t)}$ identifies a neighborhood of $m$ in $\ol{\Mod}_{0,n}$ with $\Vcal\times (\Dis_{c^2})^r$.

Let $\omega_{(m,t)}$ denote the $\Lb$-valued meromorphic one form on $C_{(m,t)}$ defined in Lemma~\ref{lm:1-form:glue:general}. The assignment $\Phi: (m,t) \mapsto \omega_{(m,t)}$ provides us with a section  of $\Lcal$ on $\Vcal\times(\Dis^*_{c^2})^r$. Since $\omega_j$ is a holomorphic section of $\Lcal_j$, $\xi^{(j)}$ depends analytically on $m_j$. It follows that the  coordinates of $\omega_{(m,t)}$ in a basis of $H^1(C_{(m,t)}\smin\Sig,\Lb)$ are given by holomorphic functions of $(m_0,\dots,m_r,t_1,\dots,t_r)$. Thus we have shown
\begin{Proposition}\label{prop:sect:L:bdry:hol}
 The section $\Phi$ is holomorphic.
\end{Proposition}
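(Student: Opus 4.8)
The plan is to reduce holomorphicity to a coordinate computation against a flat frame of the local system $\Hb^\mu$. Recall from Section~\ref{sec:def:L} that $\Lcal$ is realized as a holomorphic sub-line-bundle of the flat bundle $\Hb^\mu$ (this is precisely the content of Proposition~\ref{prop:section:hol:etale}), so a section of $\Lcal$ is holomorphic if and only if it is holomorphic as a section of the ambient $\Hb^\mu$. Since each $\omega_{(m,t)}$ is a genuine meromorphic form on the single surface $C_{(m,t)}$, the assignment $\Phi$ is already a well-defined single-valued section, and only its holomorphicity is at stake. Because holomorphicity is a local condition and $\Hb^\mu$ is flat, I would shrink the $\Vcal_j$ to be contractible and pass to the universal cover of $\Vcal\times(\Dis^*_{c^2})^r$, on which the dual local system trivializes and the flat frame $\{[\ee'_j\cdot a^j_i]\}$ of Lemma~\ref{lm:pairings:glue:gen} becomes single-valued.

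On this cover, Lemma~\ref{lm:pairings:glue:gen} identifies the coordinates of $[\omega_{(m,t)}]$ in the dual frame as exactly $(\xi^{(0)},P_1(t)\xi^{(1)},\dots,P_r(t)\xi^{(r)})$, so it suffices to check that each entry is holomorphic. For the factors $\xi^{(j)}$, I would use that $\omega_j$ extends to a holomorphic section of $\Lcal_j$ over $\Vcal_j$, so that the class $[\omega_j]\in H^1(C^j_m\smin\hat{\Sig}_j,\Lb_j)$ varies holomorphically with $m_j$; pairing it with the flat, hence locally constant, homology classes $[\ee'_j\cdot a^j_i]$ then produces holomorphic functions of $m_j$, exactly in the manner of the analyticity statements of~\cite[Sec.~3]{DeligneMostow86}. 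For the factors $P_j(t)=(-1)^{L_j}\prod_{s=1}^{L_j}t_{i_s}^{\nu_{i_s}}$, each is a product of fixed branches of the power functions $t_{i_s}^{\nu_{i_s}}$, which are holomorphic and nonvanishing once a determination is chosen on the cover. Every coordinate is therefore a product of holomorphic functions, which yields the holomorphicity of $\Phi$ upstairs and hence downstairs.

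The point that requires care — and which I expect to be the only real subtlety — is the multivaluedness coming from the exponents $\nu_{i_s}\notin\Z$: the individual coordinate functions $P_j(t)\xi^{(j)}$ are genuinely multivalued on $\Vcal\times(\Dis^*_{c^2})^r$, which is exactly why the computation must be read on the universal cover rather than downstairs. What makes the section $\Phi$ itself single-valued is that the character $e^{2\pi\imath\nu_{i_s}}$ acquired by the branch of $t_{i_s}^{\nu_{i_s}}$ as $t_{i_s}$ encircles $0$ is precisely cancelled by the monodromy of the flat frame $\{[\ee'_j\cdot a^j_i]\}$ recorded in Remark~\ref{rk:monod:Dehn:tw}. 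Verifying this compatibility — that the scalars produced by the fractional powers match the relevant diagonal block of the monodromy representation of $\Hb^\mu$ — is where the argument must be pinned down; once it is in place, holomorphicity of $\Phi$ follows immediately from Lemma~\ref{lm:pairings:glue:gen} together with the holomorphic dependence of the $\xi^{(j)}$ on the $m_j$.
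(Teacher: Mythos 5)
Your proposal is correct and follows essentially the same route as the paper: the paper likewise deduces holomorphicity directly from Lemma~\ref{lm:pairings:glue:gen}, the holomorphic dependence of $\xi^{(j)}$ on $m_j$ (via $\omega_j$ being a holomorphic section of $\Lcal_j$), and the holomorphy of the branches of $P_j(t)$. The monodromy-cancellation check you flag as the remaining subtlety is in fact automatic --- $\Phi$ is single-valued by its pointwise definition as the class of $\omega_{(m,t)}$, so holomorphicity, being local, can simply be read off on the universal cover exactly as you do (and the compatibility with Remark~\ref{rk:monod:Dehn:tw} is a consequence, not a prerequisite).
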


\section{Flat metrics on punctured spheres and Hermitian metric on the line bundle $\Lcal$} \label{sec:metric:coord}
\subsection{Hermitian norm of the section $\Phi$}

Let $m$ be now a point in a stratum $\Mod:=\Mod_{0,k_0}\times\dots\times\Mod_{0,k_r}$ of codimension $r$ in $\ol{\Mod}_{0,n}$. Let $(C_m,\Sig)$ be the stable curve represented by $m$, and $C^0_m,\dots,C^r_m$ its irreducible components.  In what follows we will use the notations of Section~\ref{sec:sect:L:bdry:gen}.  Our goal in this section is to prove a formula (cf. \eqref{eq:norm:bdry:gen}) for the Hermitian norm of $\Phi(m,t)$ in $H^1(C_{(m,t)}\smin\Sig,\Lb)$, where $\Phi$ is the section in Proposition~\ref{prop:sect:L:bdry:hol}.

On each irreducible component $C^j_m$ of $C_m$, we have a finite subset $\hat{\Sig}_j$ consisting of points in $\Sig\cap C^j_m$ and the nodes of $C^j_m$. The pair $(C^j_m,\hat{\Sig}_j)$ represents a point $m_j \in \Mod_{0,k_j}$, where $k_j=|\hat{\Sig}_j|$. We identified a neighborhood of $m$ in $\ol{\Mod}_{0,n}$ with $\Vcal_0\times\dots\times\Vcal_r\times(\Dis_{c^2})^r$, where $\Vcal_j \subset \Mod_{0,k_j}$ is a neighborhood of $m_j:=(C^j_m,\hat{\Sig}_j)$, and $c$ is a positive real constant small enough.

Let $z^j \in \C^{k_j-3}, \, j=0,\dots,r$, be the coordinates on $\Vcal_j$, and $t=(t_1,\dots,t_r)$ the coordinates on $(\Dis_{c^2})^r$. In these local coordinates, $m$ is identified with the point $(z^0(m_0),\dots,z^r(m_r),0\dots,0)$, and  we have $\Vcal\times(\Dis^*_{c^2})^r=\Vcal\times(\Dis_{c^2})^r\cap\Mod_{0,n}$, where $\Vcal=\Vcal_0\times\dots\times\Vcal_r$.

Remark that for each $j\in \{1,\dots,r\}$, the subset of $\Vcal\times(\Dis_{c^2})^r$ defined by $\{t_j=0\}$  is the intersection of $\Vcal\times(\Dis_{c^2})^r$ with a boundary divisor $D_{\Scal_j}$ in $\ol{\Mod}_{0,n}$. This divisor corresponds to the partition $\Scal_j=\{I^j_0,I^j_1\}$ of $\{1,\dots,n\}$ that is induced by the splitting of the $j$-th node of $C_m$ into two points. Thus, we see that the stratum $\Mod$ of $m$ is precisely the intersection $\cap_{1\leq j \leq r}D_{\Scal_j}$.

Recall that $\hat{\mu}^j$ is the vector recording the weights of marked points in $C^j_m$, and the component $C^0_m$ of $C_m$ is characterized by the property that all the weights in $\hat{\mu}^0$ are strictly smaller than $1$ (see Lemma~\ref{lm:exist:princ:comp}). Thus the Hermitian form on $H^1(C^0_m\smin\hat{\Sig}_0,\Lb_0)$ has signature $(1,k_0-3)$. Let us denote this Hermitian form by $((.,.))_0$. Recall that we have defined a section $\Phi: (m,t) \mapsto \omega_{(m,t)}$ of $\Lcal$ on $\Vcal\times(\Dis^*_{c^2})^r$ (see Proposition~\ref{prop:sect:L:bdry:hol}). We will prove the following

\begin{Proposition}\label{prop:norm:bdry:gen}
For $j=1,\dots,r$, let $P_j(t)$ be as in Lemma~\ref{lm:1-form:glue:general}, and $\xi^{(j)} \in \C^{k_j-2}$ be as in Lemma~\ref{lm:pairings:glue:gen}.   For each  $j=1,\dots,r$, there exists a positive definite  Hermitian form $((.,.))_j$ on $\C^{k_j-2}$ depending only on $\mu$ such that the norm of $[\omega_{(m,t)}]$ in $H^1(C_{(m,t)}\smin\Sig,\Lb)$ is given by
\begin{equation}\label{eq:norm:bdry:gen}
 (([\omega_{(m,t)}],[\omega_{(m,t)}]))=((\xi^{(0)},\xi^{(0)}))_0-\sum_{j=1}^r|P_j(t)|^2((\xi^{(j)},\xi^{(j)}))_j
\end{equation}
Here we identify $((.,.))_0$ with a Hermitian form on $\C^{k_0-2}$.
\end{Proposition}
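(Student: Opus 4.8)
The plan is to start from the area interpretation of the Hermitian form and then read off the answer from the homological basis already constructed, rather than integrating directly. Since $C_{(m,t)}$ is a \emph{smooth} curve for $t\in(\Dis^*_{c^2})^r$ and $\omega_{(m,t)}$ has valuation $-\mu_s>-1$ at each $x_s\in\Sig$ (and no other singularities, the nodes having been smoothed by the plumbing), the form $\omega_{(m,t)}$ is of the first kind. Hence Proposition~\ref{prop:herm:coh:class} applies and gives
$$
(([\omega_{(m,t)}],[\omega_{(m,t)}]))=\frac{-1}{2\pi\imath}\int_{C_{(m,t)}}\omega_{(m,t)}\wedge\overline{\omega_{(m,t)}},
$$
which is, up to normalization, the area of the flat metric defined by $\omega_{(m,t)}$. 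This is the quantity to evaluate, in the spirit of Thurston.

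Next I would compute the Gram matrix of $((.,.))$ in the basis dual to $\{[\ee'_j\cdot a^j_i]\}$, the edges of the trees $\T_j\subset C^j_m$ used in Lemma~\ref{lm:pairings:glue:gen}. Through Poincar\'e duality and the metric on $\Lb$, this Hermitian form is the intersection pairing of locally finite cycles on $C_{(m,t)}\smin\Sig$, and the structural claim is that the Gram matrix is \emph{block-diagonal}, with one block per component $C^j_m$. Indeed, for $j\geq1$ the tree $\T_j$ avoids the principal node $\hat{y}_j$ and $\T_0$ avoids one vertex of $\hat{\Sig}_0$, so cycles carried by distinct components admit disjoint representatives in $C_{(m,t)}$: near any node only the parent's tree carries edges, because the child's tree avoids its principal node (exactly the surgery already performed in Lemma~\ref{lm:pairings:glue}). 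Thus all cross-pairings vanish. Moreover each diagonal block is built from cycles lying in the fixed core of $C^j_m$, away from the plumbing annuli; it is therefore independent of $t$, and since it involves only topological intersection numbers and the metric on $\Lb_j$ (which sees only the moduli of the monodromies), it depends only on $\mu$. So the $j$-th block is precisely the intrinsic Hermitian form on $H^1(C^j_m\smin\hat{\Sig}_j,\Lb_j)$ in the dual basis to $\{[\ee'_j\cdot a^j_i]\}$.

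The signs of the blocks then follow from the global signature. As all $\mu_s\in(0,1)$, Proposition~\ref{prop:herm:sign} gives that $((.,.))$ on $H^1(C_{(m,t)}\smin\Sig,\Lb)$ has signature $(1,n-3)$; applied to the principal component, whose weights $\hat{\mu}^0$ are all $<1$ by Lemma~\ref{lm:exist:princ:comp}, it gives that the $j=0$ block has signature $(1,k_0-3)$, and I set $((.,.))_0$ to be this block. Because the total form is non-degenerate and block-diagonal and its unique positive direction is already carried by the principal block, every other block must be negative definite; for $j\geq1$ I define $((.,.))_j$ as the negative of the $j$-th block, so it is positive definite and depends only on $\mu$ as required. (The dimensions are consistent: $(k_0-3)+\sum_{j\geq1}(k_j-2)=n-3$.)

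Finally, Lemma~\ref{lm:pairings:glue:gen} gives the coordinates of $[\omega_{(m,t)}]$ in this dual basis as $(\xi^{(0)},P_1(t)\xi^{(1)},\dots,P_r(t)\xi^{(r)})$, and evaluating the block-diagonal Gram matrix yields
$$
(([\omega_{(m,t)}],[\omega_{(m,t)}]))=((\xi^{(0)},\xi^{(0)}))_0-\sum_{j=1}^r|P_j(t)|^2((\xi^{(j)},\xi^{(j)}))_j,
$$
which is \eqref{eq:norm:bdry:gen}. I expect the main obstacle to be the block-diagonality: one must check carefully that the locally finite cycles of different components, whose ends run into the node-punctures and hence toward the plumbing annuli, can genuinely be made disjoint for \emph{every} $t$, so that all cross-pairings vanish. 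This is where Thurston's surgery near the nodes (as in Lemma~\ref{lm:pairings:glue}) and the fact that $\hat{y}_j$ is the unique point of $\hat{\Sig}_j$ of weight $>1$ are essential. A direct flat-area computation is possible but messier: cutting $C_{(m,t)}$ along circles near the nodes produces leading divergent terms (of order $|t_j|^{\nu_j}$) that cancel between the principal cap and the bubble, and identifying the finite remainder as exactly $-|P_j(t)|^2((\xi^{(j)},\xi^{(j)}))_j$ is precisely what the signature argument above circumvents.
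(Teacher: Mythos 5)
Your proposal is correct in substance, but it takes a genuinely different route from the paper. The paper never decomposes the Hermitian form cohomologically: it proves the codimension-one case (Proposition~\ref{prop:area:form:bdry:r1}) by Thurston's cone-adding surgery on the flat surface defined by $\omega_{(m,t)}$ --- cut along a circle $\gamma_\tau$ near the node, cap off the piece containing $\Sig_1$ by $n_1-1$ Euclidean cones, observe that the capped surface is isometric to the flat surface of the principal component, and identify the area deficit $\sum_i\Aa(\Cb_i)$ with $|t|^{2(1-\hat{\mu}_0)}((\xi^{(1)},\xi^{(1)}))_1$ via the linear isomorphism $\Psi$ of Proposition~\ref{prop:inf:surf:coord:change}; the general case then follows by induction along the iterated plumbing of Lemma~\ref{lm:1-form:glue:general}. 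In that approach positive definiteness of $((.,.))_j$ is free (it is a sum of cone areas $\sum_i c_i|w_i|^2$ with $c_i>0$), and the signature $(1,n-3)$ is a by-product rather than an input; your argument inverts this, consuming Proposition~\ref{prop:herm:sign} twice (for the total space and for the principal component) and extracting negative definiteness of the non-principal blocks from the signature count, with the bonus that you identify $((.,.))_j$ as minus the intrinsic pairing on $H^1(C^j_m\smin\hat{\Sig}_j,\Lb_j)$, something the paper's proof does not make explicit. Three points in your sketch need to be firmed up, all of them doable. First, disjointness of supports gives vanishing of the cross \emph{intersection} numbers of the cycle basis, whereas you need block-diagonality of the Gram matrix of the \emph{dual} basis; these are equivalent because the dual Gram matrix is (up to conjugation) the inverse of the intersection matrix, but this duality bookkeeping should be spelled out --- alternatively, block-diagonality follows either sheaf-theoretically (extension by zero from the cores $C^j_{m,t}$, using that the plumbing annuli have vanishing cohomology since all monodromies are $\neq 1$), or from horizontality of the form together with the monodromies around $t_j=0$ (Remark~\ref{rk:monod:Dehn:tw}), which rescale distinct blocks by distinct unit scalars. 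Second, to claim the diagonal blocks are the \emph{intrinsic} forms you must know that the surgered cycles are homologous, inside $C^j_m\smin\hat{\Sig}_j$, to the original tree edges; this is exactly what the proof of Lemma~\ref{lm:pairings:glue} supplies. Third, your definiteness argument needs non-degeneracy of each block, which holds because all weights in $\hat{\mu}^j$ are non-integral under the standing assumption of Remark~\ref{rk:sum:neq:1}. Your closing remark mischaracterizes the paper's computation, by the way: no divergent terms ever appear there, because only compact pieces are compared (the infinite-area bubble is replaced by cones before any area is evaluated).
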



As a consequence of Proposition~\ref{prop:norm:bdry:gen}, we get

\begin{Proposition}\label{prop:Chern:form}
There exist some neighborhood $\Vcal_j$ of $m_j$ and holomorphic local coordinates $z^{j}: \Vcal_j \ra \C^{k_j-3}$ such that if  $m=(z^{0},\dots,z^{r},\underset{r}{\underbrace{0,\dots,0}})$, and  $t=(t_1,\dots,t_r) \in (\Dis^*_{c^2})^r$, then we have
\begin{equation}\label{eq:norm:Phi:normalized}
(([\omega_{(m,t)}],[\omega_{(m,t)}]))=1-||z^{0}||^2-\sum_{j=1}^r|P_j(t)|^2(1+||z^{j}||^2).
\end{equation}
and the Chern form of $\Lcal$ on $\Vcal\times(\Dis^*_{c^2})$ is given by
\begin{equation}\label{eq:Chern:form}
 \Omega_\mu:=dd^c\log\left(1-||z^{0}||^2-\sum_{j=1}^r|P_j(t)|^2(1+||z^{j}||^2)\right).
\end{equation}
In other words, locally at $m$, $\Omega_\mu$ is the pullback of the complex hyperbolic metric $dd^c\log(1-\|w\|^2)$ on $\C^{n-3}$ by the multivalued map
$$
(z^0,t_1,\dots,t_r,z^1,\dots, z^r)\mapsto w=(z^0,P_1(t),\dots,P_r(t),P_1(t)z^1,\dots,P_r(t) z^r)
$$
(note that even if the map is multivalued, the metric is well defined).
\end{Proposition}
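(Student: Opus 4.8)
The plan is to deduce Proposition~\ref{prop:Chern:form} directly from Proposition~\ref{prop:norm:bdry:gen} by a careful choice of normalized coordinates on each factor $\Vcal_j$. The starting point is formula~\eqref{eq:norm:bdry:gen}, which expresses the Hermitian norm of $[\omega_{(m,t)}]$ as
\begin{equation*}
(([\omega_{(m,t)}],[\omega_{(m,t)}]))=((\xi^{(0)},\xi^{(0)}))_0-\sum_{j=1}^r|P_j(t)|^2((\xi^{(j)},\xi^{(j)}))_j,
\end{equation*}
where $((.,.))_0$ has signature $(1,k_0-3)$ while each $((.,.))_j$ for $j\geq 1$ is positive definite. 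The key observation is that the vectors $\xi^{(j)}$ depend holomorphically on $m_j \in \Vcal_j$ (this is exactly what was established in the proof of Proposition~\ref{prop:sect:L:bdry:hol}), and that, since $[\omega_j]\neq 0$, we have $\xi^{(j)}\neq 0$ for all $j$. So each $\xi^{(j)}$ defines a nonvanishing holomorphic map from $\Vcal_j$ into $\C^{k_j-2}$.

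First I would treat the principal factor $j=0$. Because $((.,.))_0$ has signature $(1,k_0-3)$ and $((\xi^{(0)},\xi^{(0)}))_0>0$ (the class $[\omega_0]$ lies in the positive cone, by Proposition~\ref{prop:herm:sign} applied to $C^0_m$), the map $m_0 \mapsto \C\cdot\xi^{(0)}$ is the étale projective map $\widetilde{\Xi}_\mu$ of Proposition~\ref{prop:section:hol:etale} for the curves $C^0_{m_0}$, landing in the ball $\Bb_0$. Choosing an affine chart adapted to the signature $(1,k_0-3)$ form and using that this map is a local biholomorphism, I can pick local coordinates $z^0$ on $\Vcal_0$ so that, after rescaling $\xi^{(0)}$ by a nowhere-vanishing holomorphic function (which only changes the trivializing section $\Phi$ by a unit and hence does not affect the Chern form), one has $((\xi^{(0)},\xi^{(0)}))_0 = 1-\|z^0\|^2$. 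For each secondary factor $j\geq 1$, the form $((.,.))_j$ is positive definite, so $m_j\mapsto \xi^{(j)}$ can be normalized by the analogous argument to give $((\xi^{(j)},\xi^{(j)}))_j = 1+\|z^j\|^2$; here the étaleness again guarantees that $z^j$ is a genuine coordinate system. Substituting these normalizations into~\eqref{eq:norm:bdry:gen} yields~\eqref{eq:norm:Phi:normalized}, and since $\Omega_\mu$ is by definition (see Section~\ref{sec:def:L}) the Chern form of the Hermitian line bundle $(\Lcal,((.,.)))$, which equals $dd^c\log$ of the norm-squared of any local holomorphic trivializing section, formula~\eqref{eq:Chern:form} follows immediately. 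The final "in other words" reformulation is then just the tautology that $1-\|z^0\|^2-\sum_j|P_j(t)|^2(1+\|z^j\|^2)=1-\|w\|^2$ for the stated multivalued $w$.

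The main obstacle I anticipate is the normalization step for the principal factor, namely producing coordinates $z^0$ for which $((\xi^{(0)},\xi^{(0)}))_0$ takes exactly the model form $1-\|z^0\|^2$ of the complex hyperbolic metric. This requires knowing that $m_0\mapsto\C\cdot\xi^{(0)}$ is precisely (a restriction of) the Deligne--Mostow developing map into the ball, and then invoking its étale property to transport the standard ball coordinates back to $\Vcal_0$; the delicate point is that the allowed rescalings of $\xi^{(0)}$ are exactly the holomorphic unit factors, so one must verify that the residual freedom suffices to hit the model. The secondary factors are easier since the relevant forms are definite, but one still needs the same étale/local-biholomorphism input to justify that the $z^j$ are coordinates rather than merely holomorphic maps. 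The dependence of $\Phi$, and hence of the trivialization, on these choices must be tracked so that the Chern form computation is unaffected; I would note explicitly that multiplying $\Phi$ by a nowhere-vanishing holomorphic function leaves $dd^c\log\|\Phi\|^2$ unchanged, which is what makes the normalization legitimate.
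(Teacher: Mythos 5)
Your proposal is correct and follows essentially the same route as the paper's proof: diagonalize each Hermitian form $((.,.))_j$, pass to the affine chart where the last coordinate of $\xi^{(j)}$ equals $1$ (a rescaling of $\Phi$ by a holomorphic unit, harmless for the Chern form), and invoke the \'etale property of $\Xi_{\mu_j}$ from Proposition~\ref{prop:section:hol:etale} to promote the remaining entries of $\xi^{(j)}$ to local coordinates $z^j$, after which \eqref{eq:norm:bdry:gen} becomes \eqref{eq:norm:Phi:normalized} and \eqref{eq:Chern:form} follows from $\Omega_\mu=dd^c\log\|\Phi\|^2$. The normalization worry you raise for the principal factor resolves itself exactly as you suggest, since positivity of $(([\omega_0],[\omega_0]))_0$ guarantees the last coordinate is nonvanishing, so the affine-chart rescaling already produces the model form $1-\|z^0\|^2$.
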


\begin{Remark}\label{rk:Omega:cont}
 Recall from Lemma~\ref{lm:1-form:glue:general} that we have $P_j(t)=(-1)^{L_j}\prod_{s=1}^{L_j}t_{i_s}^{\nu_{i_s}}$,  where the family of indices $\{i_s, \, s=1,\dots,L_j\}$ records the components of $C_m$ between the principal one, {\em i.e.} $C^0_m$, and $C^j_m$. Since all the exponents $\nu_{i_s}$ are positive, the function $|P_j(t)|$  extends by continuity to  $(\Dis_{c^2})^r$. Thus, the function
 $$
 \varphi: (z^0,\dots,z^r,t) \mapsto 1-||z^0||^2-\sum_{j=1}^r|P_j(t)|^2(1+||z^{j}||^2)
 $$
 is a continuous function on $\Vcal\times(\Dis_{c^2})^r$.

 As we will see in the sequel, the Hermitian norm  $(([\omega_{(m,t)}],[\omega_{(m,t)}]))$ can be interpreted as the area of the flat surface defined by $\omega_{(m,t)}$. From this viewpoint, the continuity of $\varphi$ on $(\Dis_{c^2})^r$ reflects the fact that as $t$ converges to $0\in (\Dis_{c^2})^r$, the metric defined by $\omega_{(m,t)}$ ``converges'' to the metric defined by $\omega_0$ on the principal component of $C_m$. This convergence in the space of flat metrics is the key point in the construction of the (metric) completion of $\Mod_{0,n}$ introduced by Thurston~\cite{Thurston98}.
\end{Remark}

\begin{proof}[Proof of Prop.~\ref{prop:Chern:form} assuming Prop.~\ref{prop:norm:bdry:gen}]
 Recall that we have  a rank one local system $\Lb_j$ on $C^j_m\smin \hat{\Sig}_j$ whose monodromy at the points in $\hat{\Sig}$ are given by $\hat{\mu}^j$. This local system gives rise to a local system $\Hb_j$ of rank $k_j-2$ and a holomorphic line  bundle $\Lcal_j$ on $\Mod_{0,k_j}$.  Let $\Xi_{\mu_j}$ denote the section of the bundle $\mathbb{P}\Hb_j$ defined in Proposition~\ref{prop:section:hol:etale}.  By construction, $\omega_j$ is a vector in the line $\Xi_{\mu_j}(m_j)$. Let $\xi^{(j)}=(\xi^{(j)}_1,\dots,\xi^{(j)}_{k_j-2})$ be the coordinates of $\omega_j$ in some basis of $H^1(C^j\smin\hat{\Sig}_j,\Lb_j)$.  We can choose the basis of $H^1(C^j_m\smin\hat{\Sig}_j,\Lb_j)$ such that
 $$
 ((\xi^{(0)},\xi^{(0)}))_0=-\sum_{i=1}^{k_0-3}|\xi^{(0)}_i|^2 + |\xi^{(0)}_{k_0-2}|^2,
 $$
 and
 $$
 ((\xi^{(j)},\xi^{(j)}))_j=\sum_{i=1}^{k_j-2}|\xi^{(j)}_i|^2, \; \text{ for } j=1,\dots,r.
 $$
Recall that $\omega_j\in H^1(C^j\smin\hat{\Sig}_j,\Lb_j)$ is not trivial so that we can normalize our coordinates in such a way that $\xi^{(j)}_{k_j-2}=1$. Hence $(\xi^{(j)}_1,\dots,\xi^{(j)}_{k_j-3})$ are the coordinates of $\Xi_{\mu_j}(m_j)$ is some local chart of $\mathbb{P}H^1(C^j_m\smin\hat{\Sig}_j,\Lb_j)$. Since $\Xi_{\mu_j}$ is \'etale by Proposition~\ref{prop:section:hol:etale}, we can use $z^j_i=\xi^{(j)}_i, \, i=1,\dots,k_j-3$, to define local coordinates in a neighborhood of  $m_j$.   The proposition then follows from \eqref{eq:norm:bdry:gen}.
\end{proof}

We will spend the rest of this section  to prove Proposition~\ref{prop:norm:bdry:gen}. For this purpose, we will make use of the flat metric approach introduced by Thurston~\cite{Thurston98}.

\subsection{Thurston's coordinates}\label{sec:Thurston:coord}
Let us first recall Thurston's coordinates on the moduli space of flat metrics on the sphere with prescribed cone angles at singularities (see \cite[Prop. 3.2]{Thurston98}).
Fix a vector $(\theta_1,\dots,\theta_n)$, with $ 0< \theta_s < 2\pi$, such that $\theta_1+\dots+\theta_n=2\pi(n-2)$. Let $M$ denote a flat surface homeomorphic to $\S^2$ with conical singularities  denoted by $x_1,\dots,x_n$, and the cone angle at $x_s$ being $\theta_s$. Let $\T$ be a tree whose vertex set consists  of $n-1$  points in $\{x_1,\dots,x_n\}$ and all the edges are geodesics (it is not difficult to show that such a tree always exists).   Choosing an orientation for every edge of $\T$, then using a  developing map, one can associate to each edge of $\T$ a complex number (see~\cite[pp. 525-526]{Thurston98}). We  then get a vector $Z(M)$ in $\C^{n-2}$ associated to $M$.

For any flat metric (with the same prescribed cone angles at the singularities) close to $M$, one can also find a geodesic tree isomorphic to $\T$.
Hence, we also get an associated vector in $\C^{n-2}$ in the same way. It turns out that this correspondence defines a local chart for the space of flat metrics (with prescribed cone angles) on the sphere. Up to homothety,  this  space  can be identified with $\Mod_{0,n}$. Therefore, this construction also yields a local coordinate system for $\Mod_{0,n}$.

Let $m=(\CP^1,\{x_1,\dots,x_n\})\in \Mod_{0,n}$ be the point corresponding to the homothety class of $M$.  Assume that all the cone angles at the singularities are smaller than $2\pi$.  In \cite{Thurston98}, it was proved that the area of $M$ can be expressed as a Hermitian form $\Ar$ of signature $(1,n-3)$ in the coordinates of $Z(M)$, that is
$$
\Aa(M)={}^t\ol{Z(M)}\cdot \Ar \cdot Z(M).
$$
Consequently, the induced local chart on $\Mod_{0,n}$ identifies a neighborhood of $m$ with an open in the ball $\Bb:=\{\langle v \rangle, \; { }^t\bar{v}\Ar v >0\} \subset \CP^{n-3}$. By a classical construction, $\Ar$ induces a complex hyperbolic metric on $\Bb$. Since the area is an invariant of the flat metric, this complex hyperbolic metric is invariant by the coordinate changes. Therefore, we get a well defined complex hyperbolic metric structure on $\Mod_{0,n}$. \medskip

Set $\mu_s:= 1-\theta_s/(2\pi)$, and $\mu=(\mu_1,\dots,\mu_n)$. By definition, $M$ is isometric to $(\CP^1\smin\Sig,\gb)$, where $\Sig=\{x_1,\dots,x_n\}$, and $\gb$ is a flat metric  on $\CP^1\smin\Sig$ such that each $x_s$ has a neighborhood isometric to an Euclidean cone of angle $\theta_s$. Without loss of generality, we can assume that $\infty \not\in \Sig$. Remark  that $\prod_{1\leq s \leq n} |z-x_s|^{-2\mu_s}|dz|^2$ is a flat metric with the same singularities and the same cone angles as $\gb$. Therefore, we must have
$\gb(z)=\lambda^2\prod_{1\leq s \leq n} |z-x_s|^{-2\mu_s}|dz|^2$, where $\lambda$ is a positive real number.

Let $\Lb$ be the rank one local system on $\CP^1\smin\Sig$ with monodromy $\exp(2\pi\imath\mu_s)$  at $x_s$. Choose a horizontal Hermitian metric for $\Lb$, and let $\ee$ be an $\Lb$-multivalued horizontal section such that the norm of $\ee$ is $1$. Let
$$
\omega=\lambda \ee \cdot\prod_{1\leq s \leq n} (z-x_s)^{-\mu_s}dz.
$$
\noindent Then $\gb(z)$ is the metric associated to the $(1,1)$-form $\omega\wedge\ol{\omega}$. Recall that we have  a Hermitian form $((.,.))$ on $H_c^1(\CP^1\smin\Sig,\Lb)\simeq H^1(\CP^1\smin\Sig,\Lb)$ of signature $(1,n-3)$. By Proposition~\ref{prop:herm:coh:class}, we have
\begin{equation}\label{eq:norm:area}
(([\omega],[\omega]))=\int_{\CP^1\smin\Sig}\omega\wedge\ol{\omega}=\Aa(M).
\end{equation}

\medskip

Fix a base point  $p\in \CP^1\smin\Sig$ and consider the universal cover $(\Delta,\tilde{p})$ of $(\CP^1\smin\Sig,p)$. Let $f$ be a determination of the multivalued function  $\lambda(z-x_1)^{-\mu_1}\dots(z-x_n)^{-\mu_n}$ in a neighborhood $U$ of $p$. We also  denote by $f$ its pullback to a neighborhood $\tilde{U}$ of $\tilde{p}$. Let $\varphi$ be a holomorphic function on $\tilde{U}$ such that $f=\varphi'$. Let $z$ be the coordinate on $\Delta$, and set $w=\varphi(z)$. Observe that we have
$$
\varphi^*dw =f(z)dz, \text{ and } \varphi^*|dw|^2=|f(z)|^2|dz|^2,
$$
\noindent which means that $\varphi$ realizes an isometry between a neighborhood of $\tilde{p}$ (with the metric $\gb$)  and an open subset of $\C$ with the standard Euclidean metric $|dw|^2$. In other words,  $\varphi$ is a developing map for $\gb$. Therefore, we can  extend $\varphi$ to a locally isometric map from $(\Delta,\tilde{\gb})$ to $(\C,|dw|^2)$.

Now let $a$ be an oriented edge of the tree $\T$. The complex number associated to $a$ is given by $\int_{\varphi(\hat{a})}dw=\int_{\hat{a}}f(z)dz$,  where $\hat{a}$ is a component of the pre-image of $a$ in $\Delta$. We can consider $\ee\cdot a$ as an element of $H_1(\CP^1\smin\Sig,\Lb)$, therefore, we can  write
$$
\int_{\hat{a}}f(z)dz=([\ee\cdot a],[\ee\cdot f(z)dz]).
$$
\noindent From Proposition~\ref{prop:base:hom}, we know that the set $\{[\ee\cdot a ], \; a \text{ is an edge of } \T\}$ is a basis of $H_1(\CP^1\smin\Sig,\Lb)$. Since the pairing
$H_1^{\rm lf}(\CP^1\smin\Sig,\Lb)\otimes H^1(\CP^1\smin\Sig,\Lb) \ra \C$ is perfect, it follows that the cohomology class of $\omega$ is (locally) uniquely determined by the vector $Z(M) \in \C^{n-2}$.

By definition, the hyperbolic metric on $\Mod_{0,n}$ is the pullback of the complex hyperbolic metric on the ball $\Bb\subset\CP^{n-3}$. This metric is  defined by the Chern form of the  tautological line bundle over  $\Bb\subset \CP^{n-3}$. Recall that $\C\cdot[\omega]$ is the fiber of $\Lcal$ over $m$, and $\Lcal$ is actually the pullback of the tautological bundle on $\Bb$ by the map $\Xi_\mu$ (see Proposition~\ref{prop:section:hol:etale}).  Thus we have proved the following

\begin{Proposition}\label{prop:Thurston:coord:equiv}
The Thurston local coordinates  on $\Mod_{0,n}$ are defined by the section $\Xi_\mu$, and the Hermitian form $\Ar$ on $\C^{n-2}$ is induced by the Hermitian form $((.,.))$  on $H^1(\CP^1\smin\Sig,\Lb)$. Moreover, the complex hyperbolic metric on $\Mod_{0,n}$ is the one induced by the Chern form of $(\Lcal,((.,.)))$.
\end{Proposition}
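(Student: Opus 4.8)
The plan is to read Proposition~\ref{prop:Thurston:coord:equiv} as a dictionary between Thurston's flat-geometry picture and the Deligne--Mostow cohomological picture, and to assemble it from the identifications already set up just above. First I would make the comparison of one-forms explicit: since $f$ is a determination of $\lambda\prod_{1\leq s\leq n}(z-x_s)^{-\mu_s}$ and $\omega=\lambda\,\ee\cdot\prod_{1\leq s\leq n}(z-x_s)^{-\mu_s}dz$, one has $\omega=\ee\cdot f(z)\,dz$. Hence the complex number that Thurston attaches to an oriented edge $a$ of the geodesic tree $\T$, namely $\int_{\hat a}f(z)\,dz$, is exactly the pairing $\langle[\ee\cdot a],[\omega]\rangle$. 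So, entry by entry, the Thurston vector $Z(M)\in\C^{n-2}$ is the coordinate vector of $[\omega]$ in the basis dual to $\{[\ee\cdot a]\}$; this uses that the latter is a basis of $H_1(\CP^1\smin\Sig,\Lb)$ (Proposition~\ref{prop:base:hom}) together with the perfectness of the pairing between locally finite homology and $H^1$.

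The first assertion then follows at once: dividing out by homothety, i.e. by the scale $\lambda$ (equivalently by the $\C^\ast$-action rescaling $\omega$), sends $Z(M)$ to the line $\C\cdot[\omega]\in\mathbb{P}\Hb^\mu_m$, which is by definition $\Xi_\mu(m)$. Thus Thurston's chart and $\Xi_\mu$ coincide after projectivization, so the former is indeed defined by the latter. For the second assertion I would combine \eqref{eq:norm:area}, which reads $\Aa(M)=(([\omega],[\omega]))$, with Thurston's area formula $\Aa(M)={}^t\ol{Z(M)}\,\Ar\,Z(M)$. Since $Z(M)$ represents $[\omega]$ in the fixed basis, both expressions evaluate a Hermitian form on the same coordinate vector, so $\Ar$ must be the Gram matrix of $((.,.))$ with respect to that basis; in particular the two forms share the signature $(1,n-3)$, in agreement with Proposition~\ref{prop:herm:sign}.

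For the third assertion, note that both complex hyperbolic structures on $\Mod_{0,n}$ are pullbacks, along one and the same chart, of the canonical metric on the ball $\Bb=\{\langle v\rangle:\ {}^t\bar v\,\Ar\,v>0\}=\{\langle v\rangle:\ ((v,v))>0\}\subset\CP^{n-3}$. Indeed, Thurston's metric is the pullback of this canonical metric via his chart, while $\Omega_\mu$ is the Chern form of the tautological bundle of $\Bb$ pulled back by $\Xi_\mu$, recalling that $\Lcal$ is precisely this pullback (Proposition~\ref{prop:section:hol:etale}). Since the charts agree by the first part and the defining Hermitian forms agree by the second, the two metrics coincide, which completes the proof.

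The genuine work lies entirely in the bookkeeping of the first step, and this is where I expect the only friction. One must check that Thurston's geodesic tree, which has $n-1$ vertices and hence $n-2$ edges, really yields a homology basis through Proposition~\ref{prop:base:hom}: one takes the partition in which $\Sig_1$ is the single omitted vertex, so that the second tree is a point with no edges and the nonvanishing hypothesis reduces to $\alpha_n\neq 1$, which holds because $\mu_n\notin\Z$. One must also ensure that the horizontal section $\ee$ used to promote a geodesic edge to an $\Lb$-coefficient homology class is the same $\ee$ appearing in $\omega$, and keep careful track of the homothety normalization so that the resulting projective map descends to a single-valued map on $\Mod_{0,n}$ equal to $\Xi_\mu$, and not merely to a scalar multiple of it.
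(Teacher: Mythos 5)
Your proposal is correct and follows essentially the same route as the paper: identify Thurston's edge coordinates with the pairings $\langle[\ee\cdot a],[\omega]\rangle$ so that $Z(M)$ is the coordinate vector of $[\omega]$, match the area formula ${}^t\ol{Z(M)}\,\Ar\,Z(M)$ with \eqref{eq:norm:area} to recognize $\Ar$ as the Gram matrix of $((.,.))$, and invoke Proposition~\ref{prop:section:hol:etale} to see both metrics as pullbacks of the tautological data on the ball $\Bb$. Your closing remark filling in how Proposition~\ref{prop:base:hom} applies to a tree with $n-1$ vertices (taking the omitted point as the second, edgeless tree, with the hypothesis reducing to $\alpha_n\neq 1$) is a detail the paper leaves implicit, and it is checked correctly.
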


\subsection{Thurston's surgery on flat surfaces}\label{sec:surgery}
We now describe the {\em cone adding} construction introduced in \cite[pp. 520-521]{Thurston98}, which is the key idea of the proof that the signature of $\Ar$ is $(1,n-3)$. Let $M$ be a flat surface homeomorphic to the sphere which has $n$ conical singular points as above. Recall that $\mu_s$ is the curvature at the cone point $x_s$.  Suppose now that we are given a geodesic arc $e$ on $M$ joining $x_i$ to $x_j$ and $\mu_i+\mu_j < 1$.

We first construct an Euclidean cone whose apex angle  is $2\pi(1-\mu_i-\mu_j)$ as follows:
Let $(ABC)$  be a triangle in $\R^2$ whose interior angles at $A,B,C$ are given by $((1-\mu_i-\mu_j)\pi, \mu_i\pi,\mu_j\pi )$ respectively, and the length of $\ol{BC}$ is equal to the length of $e$.
Let $(A'B'C')$ be the image of $(ABC)$ by the mirror symmetry. We now glue $\ol{AC}$ to $\ol{A'C'}$, and $\ol{AB}$ to $\ol{A'B'}$ by identifications respecting the order of endpoints.
We then obtain a flat surface homeomorphic to a disc, which has a singular point $\hat{x}$ with cone angle  $2\pi(1-\mu_i-\mu_j)$ in the interior. The boundary of this disc is the union of two geodesic segments corresponding to $\ol{BC}$ and $\ol{B'C'}$. Let $y_1$ (resp. $y_j$) denote the identification of $B$ and $B'$ (resp. of $C$ and $C'$). The interior angles  at $y_i$ and $y_j$ are respectively  $2\pi\mu_i, 2\pi\mu_j$.

We now slit open $M$ along $e$ and glue the cone constructed above to  this surface  in such a way that $y_i$ (resp. $y_j$) is identified with $x_i$ (resp. with $x_j$). Since $e$ and $\ol{BC}$ have the same length, the gluings are realized by isometries. We thus have a flat surface $\hat{M}$ homeomorphic to $\S^2$. By construction, the cone angles at $x_i$ and $x_j$ in $\hat{M}$ are now equal to $2\pi$, which means that $x_i$ and $x_j$ are regular points in $\hat{M}$. Therefore, $\hat{M}$ has exactly $n-1$ singularities: $x_s$ with $s \not\in \{i,j\}$, and $\hat{x}$. Remark that  $e$  corresponds to a loop on $\hat{M}$ consisting of two geodesic arcs,  we will call $e$ and the corresponding loop the {\em base} of the added cone. We record here below some key properties  of this construction.
\begin{itemize}
 \item[$\bullet$] The triangle $(ABC)$ is uniquely determined up to isometry, since its angles are  determined by $\mu_i$ and $\mu_j$, and the length of $\ol{BC}$ is equal to the length of $e$. It follows that there exists a positive  constant $\kappa(\mu_i,\mu_j)$ such that $\Aa((ABC))=\kappa(\mu_i,\mu_j)|e|^2$, where $|e|$ is the length of $e$.

 \item[$\bullet$]We have
 $$
 \Aa(\hat{M})-\Aa(M)=2\Aa((ABC))=2\kappa(\mu_i,\mu_j)|e|^2.
 $$

 \item[$\bullet$] The sides of $(ABC)$ can be considered as geodesic segments in $\hat{M}$.  Thus, given a developing map of $\hat{M}$, we can associate to those segments the complex numbers $z(\ol{BC}), z(\ol{CA}), z(\ol{AB})$. There exist some complex  numbers $c_1,c_2$ depending only on $(\mu_1,\mu_2)$  such that
 $$
 z(\ol{AB})=c_1z(\ol{BC}), \text{ and }  z(\ol{AC})=c_2z(\ol{BC}).
 $$
 \item[$\bullet$] We can apply similar constructions to $\hat{M}$ to get other surfaces with less singularities as long as there are two singular points such that the sum of the corresponding curvatures is less than $1$.
\end{itemize}

\subsection{Flat surfaces with convex boundary}\label{sec:convex:bdry}

For our purpose, we will need to consider flat surfaces with boundary.  In what follows, by a {\em flat surface with convex boundary} we will mean a topological surface with boundary $M$ equipped with a flat metric structure with conical singularities  satisfying the following property: for any point $x \in \partial M$, there is a  neighborhood of $x$ which is isometric to a convex domain in $\R^2$.   For such a surface, any path of minimal length (in a fixed homotopy class) joining two points in the interior does not intersect the boundary.

Let $\Sig$ denote the set of cone singularities in $\mathrm{int}(M)$.  We will also need a generalized notion of homotopy on $M$. A pair of arcs $\gamma_0,\gamma_1: [0,1] \ra M$ are said to be {\em homotopic in $M\smin\Sig$ with fixed endpoints}  if we have $\gamma_0(0)=\gamma_1(0)=x, \gamma_0(1)=\gamma_1(1)=y$, and there exists a continuous map $H: [0,1]\times[0,1] \ra M$ such that $H(.,0)=\gamma_0, H(.,1)=\gamma_1$,  $H(0,.)=\{x\}, H(1,.)=\{y\}$, and $H((0,1)\times(0,1)) \subset M\smin\Sig$.
With this definition, a path with two endpoints in $\Sig$  not passing through any other point in $\Sig$ may be homotopic to the union of some arcs  with endpoints in $\Sig$.
Remark that given any developing map for the flat metric, the complex numbers associated to  two homotopic paths (that is the difference in $\C$ of the two endpoints) must be the same.

\medskip

We now suppose that $M$ is a flat surface with convex boundary. Let $\Sig=\{x_1,\dots,x_n\}$ denote the set of cone points of $M$, and  assume that $\Sig$ is contained in the interior of $M$.  All the cone angles $\theta_s$ at  $x_s$  are supposed to be smaller than $2\pi$,  and
\begin{equation}\label{eq:sum:curv:sm:1}
\sum_{1\leq s\leq n} (2\pi-\theta_s) < 2\pi.
\end{equation}

Set $\mu_s=1-\theta_s/(2\pi)$. The condition \eqref{eq:sum:curv:sm:1} is equivalent to $\mu_1+\dots+\mu_s < 1$. Let $e$ be the path of minimal length from $x_1$ to $x_2$. Note that $e$ is contained in the interior of $M$ (since $M$ has convex boundary), and $e$ does not pass through any other point in $\Sig$. Let $M'$ be the flat surface obtained by  slitting open $M$ along $e$. One of the boundary component of $M'$  consists of two copies of $e$, which will be denoted by $e_1$ and $e_2$.  Since $\mu_1+\mu_2<1$, we can glue an Euclidean cone $\Cb$ of apex curvature $1-\mu_1-\mu_2$ to $M'$ along this boundary component.
Let $\hat{M}$ denote the new surface. Remark that $\hat{M}$ also has convex boundary.   We consider $M'$ as a subsurface of $\hat{M}$.  The singular points $x_1,x_2$ of $M$ now correspond to two regular points in $\hat{M}$, we denote those points by the same notation. Let $\hat{x}$ be the apex of $\Cb$, and  set $\hat{\Sig}=\{x_3,\dots,x_n\}\sqcup\{\hat{x}\}$.
It is worth noticing that given a path in $M$ which does not cross $e$, then its image by a developing map for $\hat{M}$ is also the image of a developing map for $M$.  In view of the proof of Proposition~\ref{prop:norm:bdry:gen},  we will need the following lemma.

\begin{Lemma}\label{lm:add:cone:coord:funct}
 Let $a$ be a geodesic segment in $\hat{M}$ with endpoints in $\hat{\Sig}$. We assume that the two endpoints of $a$ are distinct, and $a$ does not contain any point in $\hat{\Sig}$ in its interior. Then there exist a piecewise geodesic path $b$  in $M'$ connecting two points in $\Sig$,  and a constant $\kappa \in \C$ such that, for a fixed choice of the developing map on the universal cover of  $\hat{M}$,  we have
 $$
 z(a)=\kappa z(e_1)+z(b),
 $$
\noindent where $z(a),z(e_1),z(b)$ are the complex numbers associated to $a,e_1,b$ respectively.
\end{Lemma}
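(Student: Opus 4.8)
The plan is to exploit the fact, recorded just before the statement, that two paths which are homotopic in $\hat{M}\smin\hat{\Sig}$ with fixed endpoints have the same associated complex number, together with the explicit description of the added cone $\Cb$ from Section~\ref{sec:surgery}. The starting remark is that $\Cb$ is a topological disc glued to $M'$ along the base loop $e_1\cup e_2$, that its two corners are the (now regular) points $x_1,x_2\in\Sig$, and that its unique interior cone point is $\hat{x}$. Decomposing $\Cb$ into the two triangles $(ABC)$ and $(A'B'C')$ of Section~\ref{sec:surgery}, the geodesic segments realised inside $\Cb$ between the distinguished points $\hat{x}=A=A'$, $x_1=B=B'$, $x_2=C=C'$ have $z$-values which are explicit multiples of $z(\ol{BC})=z(e_1)$: indeed $z(\ol{AB})=c_1 z(e_1)$ and $z(\ol{AC})=c_2 z(e_1)$ with $c_1,c_2$ depending only on $\mu_1,\mu_2$, and since $\ol{A'B'}$ (resp. $\ol{A'C'}$) is glued to $\ol{AB}$ (resp. $\ol{AC}$) respecting endpoints, the same computation applied to the mirror triangle yields a relation $z(e_2)=\zeta\, z(e_1)$ with $\zeta$ depending only on $\mu_1,\mu_2$. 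Thus every $z$-value produced inside $\Cb$ is a multiple of $z(e_1)$.

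Next I would homotope $a$ off the cone. Because $\hat{M}$ still has convex boundary, the geodesic $a$ stays in its interior and meets $\Cb$ in finitely many sub-arcs, each running between two points of the base (or having $\hat{x}$ as an endpoint) without meeting $\hat{x}$ in its interior. Since $\Cb\smin\{\hat{x}\}$ retracts onto its boundary, each such sub-arc is homotopic rel endpoints, inside $\Cb\smin\{\hat{x}\}$, to a path running along the base $e_1\cup e_2$ --- and, when one endpoint is $\hat{x}$, to a side $\ol{AB}$ or $\ol{AC}$ followed by a base arc. Replacing every cone excursion in this way changes $z(a)$ only by adding a multiple of $z(e_1)$ (by the first paragraph) and produces a path homotopic to $a$ lying entirely in $\ol{M'}$. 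Its portions along the base give further multiples of $z(e_1)$, while its interior portions join up, through the corners $x_1,x_2\in\Sig$, into a single piecewise geodesic path $b$ in $M'$ connecting two points of $\Sig$: when both endpoints of $a$ lie in $\{x_3,\dots,x_n\}$ one takes these to be the endpoints of $a$, and when an endpoint of $a$ is $\hat{x}$ one reroutes through the corresponding corner. Collecting all the contributions proportional to $z(e_1)$ into a single coefficient $\kappa$ gives $z(a)=\kappa z(e_1)+z(b)$.

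The intervening bookkeeping --- concatenating the interior $M'$-arcs into one path and tracking the signed fractions of $e_1$ and $e_2$ swept out --- is harmless; for instance, when $a$ runs from $\hat{x}$ out through a point $q\in e_1$ and then through $M'$ to its other endpoint $P_1$, homotoping $\hat{x}\to q$ to $\hat{x}\to x_1\to q$ and $q\to P_1$ to $q\to x_1\to P_1$ makes the fraction of $e_1$ between $x_1$ and $q$ cancel, leaving $\kappa=c_1$ and $b$ the interior path from $x_1$ to $P_1$. The main obstacle is the second paragraph: one must argue carefully that convexity forces $a$ to interact with the cone only through the base loop, and never through the outer boundary $\partial\hat{M}$, that each cone excursion can be pushed to the base within $\Cb\smin\{\hat{x}\}$, and that the apex endpoint is correctly handled. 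Once this is in place, the reduction to a multiple of $z(e_1)$ plus a path in $M'$ is forced by the explicit triangle geometry of Section~\ref{sec:surgery}.
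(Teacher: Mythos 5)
Your approach is essentially the paper's: both rest on homotopy invariance of the developed coordinate, on retracting $\Cb\smin\{\hat{x}\}$ onto the base loop so that every excursion of $a$ through the cone can be pushed to $e_1\cup e_2$ at no cost, and on the triangle constants of Section~\ref{sec:surgery} to evaluate the one leg emanating from the apex as $\kappa z(e_1)$. The paper organizes this more cleanly in two cases: if $\hat{x}\notin a$, the global deformation retraction of $\hat{M}\smin\{\hat{x}\}$ onto $M'$ gives $b$ directly with $\kappa=0$; if $\hat{x}$ is an endpoint, it splits $a$ at its first meeting point $y$ with $\partial\Cb$, inserts a backtrack along the base to $x_1$, and reduces the remainder to the first case.

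One step of your write-up, as literally stated, does not work: the claim that the base portions can all be siphoned off as multiples of $z(e_1)$ while the ``interior portions join up, through the corners $x_1,x_2$, into a single piecewise geodesic path $b$''. The interior portions are disjoint arcs whose endpoints lie at various points of the base, not at $x_1$ or $x_2$, so they do not concatenate into a path, and no rerouting through the corners makes them do so. But this extraction is also unnecessary: since $e_1$ and $e_2$ lie in $M'$ (they form the boundary component created by the slit), the lemma allows $b$ to run along the base, and the correct move --- which is exactly what the paper does, and what your own closing example actually implements with $b=(x_1\to q)\cdot(q\to P_1)$ --- is to absorb all base arcs into $b$ and let only the single apex leg contribute the term $\kappa z(e_1)$, with $\kappa=c_1$ or $c_2$. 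With that repair your argument coincides with the paper's proof.
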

\begin{proof}
 We have two cases:
 \begin{itemize}
 \item[$\bullet$] Case 1: $a$ does not contain $\hat{x}$.   Since $\Cb\smin\{\hat{x}\}$ is homeomorphic to a punctured disc, $M'$ is a deformation retract of $\hat{M}\smin\{\hat{x}\}$.  Let $b$ be the image of $a$ by this retraction, then $b$ is homotopic in $\hat{M}$ to $a$. Thus we have $ z(a)=z(b)$.

 \item[$\bullet$] Case 2: $a$ contains $\hat{x}$. By assumption, we can consider $a$ as a ray starting from $\hat{x}$ and ending at a point $x_s \in \Sig$. Let $y$ be the first intersection of $a$ with $\partial \Cb=e_1\cup e_2$.  Denote by $a_0$ (resp. $a_1$) the subsegment of $a$ between $\hat{x}$ and $y$ (resp. between $y$ and $x_s$). Let $e'_1$ be the geodesic segment in $\partial \Cb$ from $x_1$ to $y$.  Set $b_0=e'_1*a_0$ and $b_1:=a_1*e'_1$.    Observe that $a$ is homotopic (in $\hat{M}$) to the path $b_1*b_0$. Since $b_1$ does not contain $\hat{x}$, from Case 1, we know that it is homotopic to a piecewise geodesic path $b$ from $x_1$ to $x_s$. We thus have $ z(a)=z(b_0)+z(b)$.  But we have seen that $z(b_0)=\kappa z(e_1)$, where $\kappa$ is a complex number determined by $(\mu_1,\mu_2)$. Hence the lemma is proved for this case.
\end{itemize}
\end{proof}

\subsection{Infinite flat metric structures}\label{sec:infinite:surf}
Let $k\geq 1$ and fix a vector $\mu=(\mu_0,\dots,\mu_k)\in \R_{>0}^{k+1}$ such that $\mu_0+\dots+\mu_k=2$, where $\mu_0>1$,  but $\mu_i< 1$, for $i=1,\dots,k$. Let $\Sig=\{x_0,\dots,x_k\}$ be a set of $k+1$ points in $\CP^1$ and  $\Lb$ be the rank one local system on $\CP^1\smin\Sig$ whose monodromy at $x_s$ is $\exp(2\pi\imath\mu_s)$. Fix a horizontal multivalued section $\ee$ of $\Lb$ with Hermitian norm equal to $1$. Let $\omega$ be a meromorphic section of $\Omega^1(\Lb)$ with valuation $-\mu_s$ at $x_s$. We can write
$$
\omega=\lambda\ee\cdot\prod_{0\leq s \leq k}(z-x_s)^{-\mu_s}dz
$$
\noindent with $\lambda\in\C^*$. Note that  since $\mu_0>1$, we have $\int_{\CP^1\smin\Sig}\omega\wedge\ol{\omega} =\infty$, so $\omega$ is not of the first kind.

Let $\T$ be an embedded tree in $\CP^1$ whose vertex set is $\Sig_0=\{x_1,\dots,x_k\}$. Let $a_j, \, j=1,\dots,k-1,$ denote the edges of $\T$. From Proposition~\ref{prop:base:hom}, we know that the family $\{[\ee\cdot a_j], \, j=1,\dots,k-1\}$ is a basis of $H_1^{\rm lf}(\CP^1\smin\Sig,\Lb)$. Recall that $\omega$ represents a cohomology class in $H^1(\CP^1\smin\Sig,\Lb)\simeq H^1_c(\CP^1\smin\Sig,\Lb)$.
Set $z_j:=\langle [\ee\cdot a_j],[\omega]\rangle$, and  $Z:=(z_1,\dots,z_{k-1})\in \C^{k-1}$. Since the valuations of $\omega$ at the endpoints of $a_j$ are all greater than $-1$, we can write
$$
z_j=\lambda\int_{a_j}(z-x_0)^{-\mu_0}\dots(z-x_k)^{-\mu_k}dz.
$$

Observe that the $(1,1)$-form $\omega\wedge\ol{\omega}$ defines a flat metric structure on $\CP^1\smin\Sig$, with conical singularity at $x_s$ for $s=1,\dots,k$.  The cone angle at $x_s$ is $\theta_s=2\pi(1-\mu_s)$. Note that this is an infinite metric structure since any geodesic ray cannot reach $x_0$ in finite time. Let $M$ denote the corresponding flat surface.

Let $e_1$ be a path of minimal length in  $M$ joining $x_1$ and $x_2$, such a path must be a geodesic segment which does not contain any singularity in its interior. By assumption, we have  $\mu_1+\mu_2 <1$. Thus we can add a cone $\Cb_1$ over  $e_1$ to get a surface  with $k-1$ singularities.  By construction, the curvature at the new singularity  is $\mu_1+\mu_2$.
One can continue adding $k-2$ cones $\Cb_2,\dots,\Cb_{k-1}$ to obtain successively the surfaces $M_2,\dots, M_{k-1}$, where $M_i$ has a cone singularity with curvature $\mu_1+\dots+\mu_{i+1}$ at some point denoted by $\hat{x}_i$, and $M_{i+1}$ is obtained from $M_i$ by adding the cone $\Cb_{i+1}$ whose base is a geodesic arc, denoted by $e_{i+1}$, joining $\hat{x}_{i}$ and $x_{i+1}$.
Note that there exists a positive real constant $c_i$ depending only on $(\mu_1,\dots,\mu_k)$ such that $\Aa(\Cb_i)=c_i|e_i|^2$.
Remark also that $M_i$ has $k-i$ singularities, and $M_{k-1}$ is an infinite Euclidean cone with apex angle equal to $2\pi(1-(\mu_1+\dots+\mu_k))$.

Choosing a developing map for $M_0=M$, we get a complex number $w_1$ associated to $e_1$. We can extend the developing map of $M_0$ to get a developing map of $\Cb_1$.
By construction, $e_2$ is a geodesic ray starting from $\hat{x}_1$ (the apex of $\Cb_1$), therefore we can extend this developing map to get a complex number $w_2$ associated to $e_2$.
Continuing this process, we get a vector $W=(w_1,\dots,w_{k-1})\in \C^{k-1}$, where $w_i$ is the complex number associated to $e_i$.
We have the following lemma,  which is implicit  in the proof of \cite[Prop. 3.3]{Thurston98}.
\begin{Lemma}\label{lm:add:cone:linear:coord}
The complex number $w_i$ is a linear function of $Z$ for $i=1,\dots,k-1$.
\end{Lemma}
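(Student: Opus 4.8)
The plan is to prove the statement by induction on $i$, exploiting the fact that the number $z(a)$ attached to a path $a$ depends only on its (generalized) homotopy class in the complement of the singularities and is additive under concatenation. This lets me compute it in two complementary ways: on the bottom surface $M=M_0$ as a period of $[\omega]$, and across each cone-adding surgery by means of Lemma~\ref{lm:add:cone:coord:funct}.

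First I would settle the bottom surface. Any piecewise geodesic path $b$ with endpoints in $\Sig=\{x_1,\dots,x_k\}$, decomposed into geodesic arcs joining points of $\Sig$, defines a locally finite cycle $\ee\cdot b$ with coefficients in $\Lb$. Since a developing map of $(\CP^1\smin\Sig,\gb)$ has derivative $\lambda\prod_s(z-x_s)^{-\mu_s}$, the number attached to $b$ is $z(b)=\lambda\int_b\prod_s(z-x_s)^{-\mu_s}\,dz$, which by \eqref{eq:pairing:by:int} equals $\langle[\ee\cdot b],[\omega]\rangle$ once the horizontal sections are normalized so that $\langle e',\ee\rangle=1$. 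By Proposition~\ref{prop:base:hom} the classes $\{[\ee\cdot a_j]\}_{j=1}^{k-1}$ form a basis of $H_1^{\rm lf}(\CP^1\smin\Sig,\Lb)$, so $[\ee\cdot b]=\sum_j c_j[\ee\cdot a_j]$ with coefficients $c_j$ fixed by the homotopy class of $b$ relative to $\T$; hence $z(b)=\sum_j c_j z_j$ is linear in $Z$. In particular $w_1=z(e_1)$ is linear in $Z$.

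For the inductive step I would introduce the auxiliary statement $P(\ell)$: \emph{for every piecewise geodesic path $a$ in $M_\ell$ joining two singular points of $M_\ell$, the number $z(a)$ is a linear function of $Z$ with constant coefficients}. The case $P(0)$ is the previous paragraph. Assuming $P(\ell-1)$, I take such a path $a$ in $M_\ell$ and split it into geodesic segments with endpoints in the singular set $\hat{\Sig}_\ell$ and no singular point in their interior. To each segment I apply Lemma~\ref{lm:add:cone:coord:funct} to the surgery $M_{\ell-1}\to M_\ell$: it produces a constant $\kappa$ depending only on $\mu$ and a piecewise geodesic path $b'$ in the slit surface $M_{\ell-1}'$ joining singular points of $M_{\ell-1}$, with $z(\text{segment})=\kappa\,z(e_\ell)+z(b')$. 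Since $e_\ell$ is a path in $M_{\ell-1}$ joining singular points there, and $b'$ likewise, both $z(e_\ell)$ and $z(b')$ are linear in $Z$ by $P(\ell-1)$; summing over the segments gives $P(\ell)$. Finally each $e_{i+1}$ is a single geodesic arc in $M_i$ joining the apex $\hat{x}_i$ to the next original cone point, both singular points of $M_i$, so $w_{i+1}=z(e_{i+1})$ is covered by $P(i)$, which together with $w_1$ yields the claim for all $i=1,\dots,k-1$.

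The step requiring care is the reduction: I must check that the paths $b'$ delivered by Lemma~\ref{lm:add:cone:coord:funct} genuinely lie in the lower-level surface $M_{\ell-1}$ and join singular points there, so that the induction hypothesis applies, and that the constants $\kappa$ and the expansion coefficients $c_j$ are truly constant in the modulus — they are fixed once the combinatorial type of the geodesic tree and of the successive minimal arcs is fixed. This combinatorial constancy is exactly what upgrades the dependence from merely affine or locally linear to genuinely $\C$-linear, which is the point needed to transport Thurston's Hermitian area form from the $W$-coordinates to the $Z$-coordinates.
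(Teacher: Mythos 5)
Your overall strategy --- inducting over the cone-adding surgeries, using Lemma~\ref{lm:add:cone:coord:funct} to push the number of a path on $M_\ell$ down to $M_{\ell-1}$, together with homotopy invariance and additivity of developing-map numbers --- is the same as the paper's, and your base case on $M_0$ (pairing the locally finite homology class of a path against $[\omega]$ and expanding in the basis of Proposition~\ref{prop:base:hom}) is correct and handles \emph{arbitrary} paths between singular points. The genuine gap is in the inductive step. Lemma~\ref{lm:add:cone:coord:funct} accepts as input only a geodesic segment with endpoints in the singular set and no singular point in its interior, while its output $b$ is in general a piecewise geodesic path with corners at \emph{non-singular} points: in the proof of that lemma, $b$ is produced by retracting a path onto the slit surface $M'$, so it runs along the two copies $e_1,e_2$ of the slit and has corners at interior points of the slit (and at the point $y$ in Case 2). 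Consequently your hypothesis $P(\ell-1)$ must cover piecewise geodesic paths with non-singular corners --- you do apply it to the paths $b'$ --- yet your proof of $P(\ell)$ begins by ``splitting $a$ into geodesic segments with endpoints in $\hat{\Sig}_\ell$ and no singular point in their interior'', which is only possible when every corner of $a$ lies at a singular point. So the class of paths for which you \emph{prove} the statement is strictly smaller than the class for which you \emph{assume} it, and the induction does not close. Concretely: $P(0)$ (homological) covers everything, so $w_2$ is fine; but your proof of $P(1)$ only covers paths in $M_1$ with singular corners, whereas expressing $w_3$ via Lemma~\ref{lm:add:cone:coord:funct} requires $P(1)$ for a path $b'$ in $M_1'$ with non-singular corners. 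The argument therefore already fails for $k\geq 4$.

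The paper closes this loop by re-introducing the homological apparatus at \emph{every} level, not just level $0$: on $M_\ell$ it takes the local system $\Lb_\ell$, the meromorphic form $\omega_\ell$ inducing the flat metric, and a geodesic tree $\T_\ell$ whose vertex set is the singular set minus $x_0$ and whose edges are honest geodesic segments; by Proposition~\ref{prop:base:hom} these edges give a basis of $H_1^{\rm lf}$, so the number of an arbitrary path between singular points of $M_\ell$ is a constant-coefficient linear function of the level-$\ell$ periods $Z^\ell$, and only the tree edges --- which Lemma~\ref{lm:add:cone:coord:funct} does accept --- need to be transferred down to $M_{\ell-1}$ to show that $Z^\ell$ is linear in $Z$. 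Alternatively, you could patch your argument by proving that every piecewise geodesic path between singular points is homotopic, in the generalized sense of Section~\ref{sec:convex:bdry}, to a concatenation of geodesic segments with endpoints in the singular set; but that is an additional claim you neither state nor prove, and as written your step fails exactly on the paths that Lemma~\ref{lm:add:cone:coord:funct} forces you to consider.
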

\begin{proof}
We will prove this lemma by induction. Recall that $w_1$ is the complex number associated to the geodesic arc $e_1$ on $M_0=M$. But this number can be interpreted as the pairing of the homology class $[\ee\cdot e_1]$ with $[\omega]$, hence it is a linear function of $Z$. Note also that by the same argument, the complex number associated to any path in $M_0$ with endpoints in $\{x_1,\dots,x_k\}$ is a linear function of $Z$.

Consider the flat surface $M_1$. As a Riemann surface, $M_1$ can be identified with $\CP^1$. Set  $\hat{\Sig}_1:=\{x_0,\hat{x}_1,x_3,\dots,x_k\}$. Let $\Lb_1$ be the rank one local system on $\CP^1\smin\hat{\Sig}_1$ with monodromy $\exp(2\imath\pi\mu_s)$ at $x_s$, for $s=0,3,\dots,k$, and $\exp(2\imath\pi(\mu_1+\mu_2))$ at $\hat{x}_1$.
The flat metric of $M_1$ is thus induced by a $\Lb_1$-valued meromorphic $1$-form $\omega_1 \in \Gamma(\CP^1,\jj_*^m\Omega(\Lb_1))$ with valuation $-\mu_s$ at $x_s$, for $s=0,3,\dots,k$, and $-(\mu_1+\mu_2)$ at $\hat{x}_1$.

Let $\T_1$ be an embedded tree in $M_1$ whose vertex set is equal to $\{\hat{x}_1,x_3,\dots,x_k\}$, and whose edges are geodesic segments in $M_1$. One can construct such a tree by seeking for instance the paths of minimal length joining $\hat{x}_1$ to the other cone points $x_3,\dots,x_k$. Let $a^1_1,\dots,a^1_{k-2}$ denote the edges of $\T_1$. Fix a multivalued horizontal section $\ee_1$ of $\Lb_1$.  Then $\{[\ee_1\cdot a^1_1],\dots, [\ee_1\cdot a^1_{k-2}]\}$ is a basis of $H^{\rm lf}_1(\CP^1\smin\hat{\Sig},\Lb_1) \simeq H_1(\CP^1\smin\hat{\Sig},\Lb_1)$.

Set $z^1_j:=\langle [\ee_1\cdot a^1_j],[\omega_1]\rangle$, for $j=1,\dots,k-2$.  Since the complex number associated to any path with endpoints in $\hat{\Sig}_1\smin\{x_0\}$ can be also interpreted as the pairing of $[\omega_1]$ with a homology class in $H_1^{\rm lf}(\CP^1\smin\hat{\Sig}_1,\Lb_1)$, it follows that such a number is a linear function of $Z^1:=(z^1_1,\dots,z^1_{k-2})$. From Lemma~\ref{lm:add:cone:coord:funct}, we deduce that the $z^1_j$'s are linear functions of the vector $Z$. Therefore, the complex number associated to any path in $M_1$ with endpoints in $\hat{\Sig}_1\smin\{x_0\}$ is a linear function of $Z$. In particular $w_2$ is a linear function of $Z$. The rest of the proof follows from an induction argument.
\end{proof}

Lemma~\ref{lm:add:cone:linear:coord} implies that the correspondence  $\Psi: \C^{k-1} \ra \C^{k-1}, (z_1,\dots,z_{k-1}) \mapsto (w_1,\dots,w_{k-1})$ is a linear map. Our goal now  is to show the following.
\begin{Proposition}\label{prop:inf:surf:coord:change}
The linear map $\Psi$ is an isomorphism.
\end{Proposition}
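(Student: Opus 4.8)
The plan is to exhibit $\Psi$ as the transition matrix between two bases of the locally finite homology $H_1^{\mathrm{lf}}(\CP^1\smin\Sig,\Lb)$, and thereby conclude it is invertible. Since $\Psi$ is already known to be linear, it is enough to prove injectivity; in fact I will identify it with a genuine change of basis. Note that one cannot simply invoke Thurston's coordinate theorem here, since $\mu_0>1$ puts us outside the case of all cone angles less than $2\pi$, which is precisely why a separate argument is needed.

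First I would reinterpret the entries of $W$ cohomologically. The coordinate $w_1=z(e_1)$ is literally the pairing $\langle[\ee\cdot e_1],[\omega]\rangle$ of $[\omega]$ with the class of the edge $e_1$ joining $x_1$ and $x_2$. For $i\geq2$ the segment $e_i$ lives in the surgered surface $M_{i-1}$ rather than in $M$, but by iterating Lemma~\ref{lm:add:cone:coord:funct} exactly as in the proof of Lemma~\ref{lm:add:cone:linear:coord}, its developed complex number can be rewritten as $w_i=\langle[\gamma_i],[\omega]\rangle$, where $\gamma_i$ is a locally finite cycle in $\CP^1\smin\Sig$ built from a piecewise geodesic path ending at $x_{i+1}$ together with correction terms of the form $\kappa\,z(e_j)$ with $j<i$. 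The crucial point is that every coefficient $\kappa$ arising in these reductions depends only on the weights $\mu$ (as recorded among the properties of the cone-adding construction in Section~\ref{sec:surgery}), so each $[\gamma_i]\in H_1^{\mathrm{lf}}(\CP^1\smin\Sig,\Lb)$ is a fixed class independent of $[\omega]$. This is what makes $\Psi$ linear and, more importantly, lets me regard $W$ as the coordinates of $[\omega]$ in the basis dual to $\{[\gamma_i]\}$.

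Next I would prove that $\{[\gamma_1],\dots,[\gamma_{k-1}]\}$ is a basis of $H_1^{\mathrm{lf}}(\CP^1\smin\Sig,\Lb)$, whose dimension is $k-1$ by Proposition~\ref{prop:base:hom} applied to a tree on $\{x_1,\dots,x_k\}$. I would compare $\{[\gamma_i]\}$ with the basis given by the edges of a fixed spanning tree adapted to the ordering $x_1,\dots,x_k$ (for instance the star centered at $x_1$). By construction the $i$-th cone incorporates the singularity $x_{i+1}$ for the first time, so $[\gamma_i]$ equals the edge reaching $x_{i+1}$ plus a combination of $[\gamma_1],\dots,[\gamma_{i-1}]$ coming from the correction terms $\kappa\,z(e_j)$. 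Hence the transition matrix from $\{[\gamma_i]\}$ to the tree basis is triangular with unit diagonal entries, so it is invertible and $\{[\gamma_i]\}$ is indeed a basis. Since $Z$ records the coordinates of $[\omega]$ in the basis dual to the edges of $\T$ while $W$ records its coordinates in the basis dual to $\{[\gamma_i]\}$, the map $\Psi$ is exactly the corresponding change-of-basis transformation, which proves the proposition.

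The main obstacle will be the second step: making rigorous the triangular (unipotent) structure of the family $\{[\gamma_i]\}$, that is, carefully tracking through the inductive reduction of Lemma~\ref{lm:add:cone:coord:funct} that the newly absorbed singularity $x_{i+1}$ contributes a genuinely new tree direction with nonzero coefficient, while every correction term only re-expresses classes already produced at earlier steps. This bookkeeping, rather than any analytic difficulty, is the heart of the argument; once it is in place, invertibility of $\Psi$ follows immediately from Proposition~\ref{prop:base:hom}.
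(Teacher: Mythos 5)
Your opening reduction is sound, and it is a genuinely different framing from the paper's: writing $w_i=\langle[\gamma_i],[\omega]\rangle$ for locally finite classes $[\gamma_i]$ (locally constant in $\omega$, which is all that is needed since $\Psi$ is the linear map attached to the given $Z$), the perfectness of the pairing between $H_1^{\rm lf}(\CP^1\smin\Sig,\Lb)$ and $H^1(\CP^1\smin\Sig,\Lb)$ shows that $\Psi$ is an isomorphism if and only if $\{[\gamma_1],\dots,[\gamma_{k-1}]\}$ is a basis. The part of your bookkeeping that is internal to the $\gamma$'s is also fine: unwinding the corrections $\kappa\, z(e_j)$ gives ${\rm span}\{[\gamma_1],\dots,[\gamma_i]\}={\rm span}\{[\ee\cdot e_1],[\ee\cdot b_2],\dots,[\ee\cdot b_i]\}$, where $b_i$ is the reduced piecewise geodesic path reaching $x_{i+1}$.

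The genuine gap is the triangularity claim against a \emph{fixed} embedded tree (the star at $x_1$). With local system coefficients, the class $[\ee\cdot\beta]$ of a path between two punctures is not determined by its endpoints: two paths from $x_1$ to $x_{i+1}$ in different homotopy classes differ by a locally finite cycle, and compactly supported cycles span all of $H_1^{\rm lf}$ (the natural map $H_1\ra H_1^{\rm lf}$ is an isomorphism when all monodromies are nontrivial), so such a difference can have nonzero coordinates on \emph{every} star edge $[a_l]$, including $l=i$ and $l>i$. The surgery gives no control on the homotopy classes of $e_1$ and of the reduced paths $b_i$ relative to the star: already $[\gamma_1]=[\ee\cdot e_1]$ need not equal $[a_1]$ (the minimal geodesic from $x_1$ to $x_2$ sits in a metric-dependent homotopy class), so neither the unit diagonal nor the vanishing of the entries involving later indices is justified; the transition matrix is a priori arbitrary. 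Nor can you repair this by taking $e_1,b_2,\dots,b_{k-1}$ themselves as tree edges: they form only a \emph{combinatorial} spanning tree, whereas Proposition~\ref{prop:base:hom} requires an embedded tree, and a family of mutually crossing paths in a tree pattern can fail to give a basis. Establishing that basis property is essentially the whole content of the proposition, and the proposal does not address it. For comparison, the paper avoids homology here entirely: it proves injectivity of $\Psi$ by \emph{reconstructing} the flat surface from $W$ --- starting from the terminal cone $M_{k-1}\simeq\Cb$ and successively cutting off $\Cb_{k-1},\dots,\Cb_1$ and regluing --- so that $\Psi(Z')=\Psi(Z'')$ forces the underlying surfaces to be isometric, hence $Z'=\lambda Z''$; linearity of $\Psi$ and the nonvanishing of the coordinates $w_i$ then force $\lambda=1$.
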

\begin{proof}
Let $\Lcal$ be the holomorphic line bundle over $\Mod_{0,{k+1}}$ associated to the weight vector $\mu$ (see Section~\ref{sec:def:L}).
To show that $\Psi$ is an isomorphism, we will show that $\Psi$ is injective in a neighborhood of $Z$.
For this, we consider $\omega$ as an element in the fiber of the line bundle $\Lcal$ over  the point $m=(\CP^1,\{x_0,x_1,\dots,x_k\}) \in \Mod_{0,k+1}$, and identify a neighborhood $\Vcal$ of $Z$ in $\C^{k-1}$  with a neighborhood of $\omega$ in the total space of $\Lcal$.

We can always assume that $x_0=\infty, x_1=0,x_2=1$. A point $m'$ in $\Mod_{0,k+1}$ close to $m$ corresponds to a tuple $(\CP^1,\{\infty,0,1,x'_3,\dots,x'_k\})$, with $x'_i$ close to $x_i$. Hence an element of $\Lcal$ close to $\omega$ can be written as
$$
\omega'=\lambda'\ee\cdot z^{-\mu_1}(z-1)^{-\mu_2}\prod_{i=3}^k(z-x'_i)^{-\mu_i}dz
$$
\noindent where $\lambda'\in\C$ is close to $\lambda$, and $\ee$ is considered as a  horizontal section of $\Lb$ on $m'$.

Assume that we have $Z'$ and $Z''$ in $\Vcal$ such that $\Psi(Z')=\Psi(Z'')=W'=(w'_1,\dots,w'_{k-1})$.  Let $\omega'$ and $\omega''$ be the points in $\Lcal$ corresponding to $Z'$ and $Z''$. The projections of $\omega'$ and $\omega''$ in $\Mod_{0,k+1}$ are denoted by $m'$ and $m''$.

Let $M'$ and $M''$ denote the flat surfaces defined by $\omega'$ and $\omega''$.  By definition, the vector $W'$ records the complex numbers associated to the bases of $k-1$ cones added to $M'$ (resp. to $M''$) to obtain a flat surface $M'_{k-1}$  (resp. $M''_{k-1}$) with a single singularity. Observe that the surfaces $M'_{k-1}$ and $M''_{k-1}$ are both  isometric to a standard infinite Euclidian  cone $\Cb$ with apex angle $2\pi(1-(\mu_1+\dots+\mu_k))$. For the sake of concreteness, $\Cb$ is defined by the flat metric $|z|^{-2(\mu_1+\dots+\mu_k)}|dz|^2$ on $\C$. Note also that $\Cb$ is also isometric to $M_{k-1}$.


Given $\Cb\simeq M_{k-1}$, we can recover $M$ from $W=(w_1,\dots,w_{k-1})$ as follows:  since $\Cb_{k-1}$ is a neighborhood of the apex of $M_{k-1}$, we can choose a developing map for $M_{k-1}$ such that the complex number associated to one of the geodesic segments in the base of $\Cb_{k-1}$ is $w_{k-1}$.
Cut off the cone $\Cb_{k-1}$,  and glue the two geodesic segments in the base of $\Cb_{k-1}$, we  obtain the flat surface $M_{k-2}$ having two singularities.
By construction, the cone $\Cb_{k-2}$ is a neighborhood of one of these singularities. The complex number $w_{k-2}$ determines the embedding of $\Cb_{k-2}$ into $M_{k-2}$.
Therefore, we can then continue  the cutting-regluing operation to remove  the remaining $k-2$ cones and  get back to the surface $M'$.
Note that along this process, one needs to keep track of the  developing map chosen for $\Cb\simeq M_{k-1}$.

Clearly, we can recover $M'$ and $M''$ from $W'$ and $W''$ in the same way. Since $M'_{k-1}$ and $M''_{k-1}$ are isometric, and $W'=W''$, we can conclude that $M'$ and $M''$ are isometric.
The isometry between $M'$ and $M''$ induces an isomorphism between $m'$ and $m''$. Therefore, we have $m'=m''$, which means that  $\omega'$ and $\omega''$ belong to the same fiber of $\Lcal$. Hence, there is a complex number $\lambda$ such that $\omega'=\lambda\omega''$, or equivalently  $Z'=\lambda Z''$.  Since $\Psi$ is a linear map, we have $\Psi(Z')=\lambda\Psi(Z'') \Leftrightarrow W'=\lambda W'$. Recall that by construction, all the coordinates of $W'$ are non-zero, thus we must have $\lambda=1$, and $Z'=Z''$. The proposition is then proved.
\end{proof}

\subsection{Proof of Proposition~\ref{prop:norm:bdry:gen}: case of codimension one}\label{sec:pf:norm:bdry:codim1}
We now give the proof of Proposition~\ref{prop:norm:bdry:gen}  in  the case $r=1$, that is $m$ is a generic point in a divisor $D_\Scal$, where  $\Scal=\{I_0,I_1\}\in\Pcal$ (see Section~\ref{sec:coord:bdry}).

We can assume that $I_0=\{1,\dots,n_0\}$ and $I_1=\{n_0+1,\dots,n\}$. Let $C^0_m,C^1_m$ be the corresponding irreducible components of $C_m$. For $i=0,1$, let $\hat{\mu}_i, \hat{y}_i, \hat{\Sig}_i, \Lb_i, \omega_i, \T_i$  be as in Section~\ref{sec:sect:L:near:bdry}.

Let $((.,.))_0$ be the Hermitian form on $H^1(C^0_m\smin\hat{\Sig}_0,\Lb_0)$. By Proposition~\ref{prop:herm:sign}, we know that $((.,.))_0$ has signature $(1,n_0-2)$.
Let $\xi^{(0)}=(\xi^{(0)}_1,\dots,\xi^{(0)}_{n_0-1}) \in \C^{n_0-1}$ (resp. $\xi^{(1)}=(\xi^{(1)}_1,\dots,\xi^{(1)}_{n_1-1}) \in \C^{n_1-1}$) be the vector recording the pairings of $[\omega_0]$ (resp. $[\omega_1]$) with the basis of $H_1^{\rm lf}(C^0_m\smin\hat{\Sig}_0,\ol{\Lb}_0)$ associated to $\T_0$ (resp. the basis of $H_1^{\rm lf}(C^1_m\smin\hat{\Sig}_1,\ol{\Lb}_1)$ associated to $\T_1$).
Let $C_{(m,t)}$ be the stable curve obtained from the plumbing construction in Section~\ref{sec:sect:L:near:bdry}, where $t \in \Dis_{c^2}$.  We need to show the following

\begin{Proposition}\label{prop:area:form:bdry:r1}
There exists a positive definite Hermitian form $((.,.))_1$ on $\C^{n_1-1}$ depending only on $(\mu_{n_0+1},\dots,\mu_n)$ such that, if $\omega_{(m,t)}$ is the element of $H^{1,0}(C_{(m,t)}\smin\Sig,\Lb)$ defined in Lemma~\ref{lm:1-form:glue}, then we have
\begin{equation}\label{eq:area:near:bdry}
 (([\omega_{(m,t)}],[\omega_{(m,t)}]))=((\xi^{(0)},\xi^{(0)}))_0-|t|^{2(1-\hat{\mu}_0)}((\xi^{(1)},\xi^{(1)}))_1.
 \end{equation}
\end{Proposition}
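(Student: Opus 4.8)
The plan is to identify the Hermitian norm with the \emph{area} of the flat surface attached to $\omega_{(m,t)}$ and to compute that area by Thurston's cone-adding surgery. By \eqref{eq:norm:area} applied on $C_{(m,t)}$ we have $(([\omega_{(m,t)}],[\omega_{(m,t)}]))=\Aa(C_{(m,t)})$, where $C_{(m,t)}$ carries the flat metric $\omega_{(m,t)}\wedge\ol{\omega_{(m,t)}}$. By Lemma~\ref{lm:1-form:glue} this metric restricts to $\omega_0\wedge\ol{\omega_0}$ on the principal piece $C^0_{m,t}$ and to $|t|^{2(1-\hat\mu_0)}\,\omega_1\wedge\ol{\omega_1}$ on the non-principal piece $C^1_{m,t}$. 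The singularities $x_{n_0+1},\dots,x_n$ contained in $C^1_{m,t}$ have weights summing to $\hat\mu_0<1$, so any pair of them, and any cone obtained by merging some of them, has total weight strictly below $1$; this is exactly the configuration to which the surgery of Section~\ref{sec:surgery} applies.

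First I would run the construction of Section~\ref{sec:infinite:surf} on the cluster $\{x_{n_0+1},\dots,x_n\}$: successively add $n_1-1$ Euclidean cones $\Cb_1,\dots,\Cb_{n_1-1}$, each based on the minimal geodesic joining two of the (possibly already merged) singularities, until the whole cluster collapses to a single cone point of curvature $\hat\mu_0$ located at the node. Since $C^1_{m,t}$ has convex boundary (the plumbing circle $A_t$), each such minimal geodesic stays inside $C^1_{m,t}$ (Section~\ref{sec:convex:bdry}), so the surgery never reaches $C^0_{m,t}$ and leaves the principal piece untouched. The resulting surface $\hat M$ therefore carries the metric $\omega_0\wedge\ol{\omega_0}$ away from the node and is completed there by a cone of angle $2\pi(1-\hat\mu_0)$; matching the cone angle and the boundary circumference shows $\hat M$ is isometric to the flat surface defined by $\omega_0$ on $C^0_m$. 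Hence, using \eqref{eq:norm:area} on $C^0_m$ and the identification of $\xi^{(0)}$ with the coordinates of $[\omega_0]$, we get $\Aa(\hat M)=\int_{C^0_m}\omega_0\wedge\ol{\omega_0}=((\xi^{(0)},\xi^{(0)}))_0$.

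Next I would account for the area produced by the surgery. Each cone satisfies $\Aa(\Cb_i)=c_i|e_i|^2$ with $c_i>0$ depending only on $(\mu_{n_0+1},\dots,\mu_n)$, and by the additivity recorded in Section~\ref{sec:surgery} one has $\Aa(\hat M)=\Aa(C_{(m,t)})+\sum_{i=1}^{n_1-1}\Aa(\Cb_i)$. Since $C^1_{m,t}$ is isometric, up to the scaling factor $|t|^{1-\hat\mu_0}$, to a truncation of the infinite flat surface attached to $\omega_1$ (the developing map of $\omega_{(m,t)}$ on $C^1_{m,t}$ equals $-t^{1-\hat\mu_0}$ times that of $\omega_1$), the complex numbers of the cone bases are $z(e_i)=-t^{1-\hat\mu_0}w_i$, where $W=(w_1,\dots,w_{n_1-1})=\Psi(\xi^{(1)})$ and $\Psi$ is the linear map of Lemma~\ref{lm:add:cone:linear:coord}. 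Thus $\sum_i c_i|e_i|^2=|t|^{2(1-\hat\mu_0)}\sum_i c_i|w_i|^2$. Setting $((\xi^{(1)},\xi^{(1)}))_1:=\sum_i c_i|\Psi(\xi^{(1)})_i|^2$, Proposition~\ref{prop:inf:surf:coord:change} (asserting that $\Psi$ is an isomorphism) together with $c_i>0$ shows that this is a positive definite Hermitian form in $\xi^{(1)}$ depending only on $(\mu_{n_0+1},\dots,\mu_n)$; combining the two displays gives $\Aa(C_{(m,t)})=((\xi^{(0)},\xi^{(0)}))_0-|t|^{2(1-\hat\mu_0)}((\xi^{(1)},\xi^{(1)}))_1$, which is \eqref{eq:area:near:bdry}.

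The main obstacle is the geometric identification of the second paragraph: one must check that the cone-adding surgery can be carried out \emph{entirely inside} $C^1_{m,t}$, so that the minimal geodesics never escape through the convex boundary and the principal piece is genuinely preserved, and that after collapsing the cluster the completed cone glues back to $C^0_{m,t}$ to reconstitute precisely the $\omega_0$-flat surface on $C^0_m$, not merely a surface with the same cone data. Controlling this requires the convex-boundary formalism of Section~\ref{sec:convex:bdry} (in particular Lemma~\ref{lm:add:cone:coord:funct}, which guarantees that the surgery changes the relevant periods only linearly) together with the angle-and-circumference matching at the node. The remaining bookkeeping—propagating the factor $-t^{1-\hat\mu_0}$ through the developing map and invoking the isomorphism $\Psi$—is routine once this identification is secured.
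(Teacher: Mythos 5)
Your proposal is correct and is essentially the paper's own proof: interpret the Hermitian norm as the flat area, cut off the piece containing $\Sig_1$ along a circle in the plumbing region, run Thurston's cone-adding surgery inside that convex piece, identify the capped piece with a metric disc about the apex of the model cone (so that the surgered surface is isometric to $M_0$), and obtain positive definiteness of $((.,.))_1$ from the linearity and invertibility of $\Psi$ (Lemma~\ref{lm:add:cone:linear:coord}, Proposition~\ref{prop:inf:surf:coord:change}) combined with the scaling factor $-t^{1-\hat\mu_0}$. Two steps should nevertheless be tightened. First, the decisive isometry does not follow from ``matching the cone angle and the boundary circumference'': a flat disc with one interior cone point is not determined by these two data (one can perturb the boundary keeping its length). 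What is needed --- and what the paper uses --- is that the cut circle has \emph{constant geodesic curvature}: it is the metric circle of radius $R(\tau)=\tau^{1-\hat\mu_0}/(1-\hat\mu_0)$ about the would-be apex of the cone metric $|F|^{-2\hat\mu_0}|dF|^2$, and the surgery, being interior, leaves it unchanged; hence the developing map of the capped disc sends its boundary onto a round circle centered at the apex of the model cone, and only then does the circumference pin down the radius. Second, $C^0_{m,t}$ and $C^1_{m,t}$ overlap in the plumbing annulus, so the surgery may perfectly well modify points of $C^0_{m,t}$; the piece that is genuinely untouched is the complement of $\mathrm{int}(C^1_{m,t})$, not $C^0_{m,t}$ itself. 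The paper sidesteps this by cutting along an interior circle $\gamma_\tau$ with $\tau\in(|t|/c,c)$, which decomposes $M$ into two pieces with disjoint interiors. Neither point alters your conclusion, since your cut circle does have constant curvature and the reassembled surface is still isometric to $M_0$, but both should be stated explicitly for the argument to be complete.
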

\begin{proof}
Let $M_0$ be the flat surface defined by $\omega_0$ on $C^0_m$, $M_1$ the surface defined by $-t^{1-\hat{\mu}_0}\omega_1$ on $C_m^1$, and $M$ the flat surface defined by $\omega_{(m,t)}$ on $C_{(m,t)}$. Let $(F,U,G,V,c)$ be the plumbing data as in Section~\ref{sec:sect:L:near:bdry}.
Choose a constant $\tau \in (|t|/c,c)$, and let $\gamma_\tau$ be the curve in $C_{(m,t)}$ which corresponds to  the set $\{p \in U, |F(p)|=\tau\} \simeq \{q\in V, |G(q)|=|t|/\tau\}$.
Since the metric defined by $\omega_0$ in $U$ is the pullback of $|F(z)|^{-2\hat{\mu}_0}|dF(z)|^2$, we deduce  that $\gamma_\tau$ is the set of points  whose distance in $M_0$ to $\hat{y}_0$ is  $R(\tau):=\frac{\tau^{1-\hat{\mu}_0}}{1-\hat{\mu}_0}$. In particular, $\gamma_\tau$ has constant curvature $1/R(\tau)$ and length equal to $2\pi\tau^{1-\hat{\mu}_0}$.

The curve $\gamma_\tau$ cuts $M$ into two subsurfaces, the one that contains $\Sig_i$ is denoted by $M_i^\tau$.   The observation above implies that $\partial M_1^\tau$ is convex.
Remark that  $M_i^\tau$ can be viewed as a subsurface of $M_i$.
By definition, $M_1^\tau$ contains $n_1$ cone singularities corresponding to  the points in $\Sig_1$  in its interior.
Since the sum of the curvatures at those points is smaller than $2\pi$, one can add  $n_1-1$ cones $\Cb_1,\dots,\Cb_{n_1-1}$ to $M_1^\tau$ to get a  flat surface $\hat{M}_1^\tau$ having a single singularity with cone angle $2\pi(1-\hat{\mu}_0)$.

Let $W=(w_1,\dots,w_{n_1-1})$ be the vector recording the complex numbers associated to the bases of the cones $\Cb_1,\dots,\Cb_{n-1}$.  By Proposition~\ref{prop:Thurston:coord:equiv}, there is a linear isomorphism $\Psi$ of $\C^{n_1-1}$ such that $W=-t^{1-\hat{\mu}_0}\Psi(\xi^{(1)})$.

Recall that the total area of the added cones is equal to $\sum_{i=1}^{n_1-1}c_i|w_i|^2$, where the $c_i$'s are real positive constants determined by the weight vector $(\mu_{n_0+1},\dots,\mu_n)$. Therefore, there is a positive definite Hermitian form $((.,.))_1$  on $\C^{n_1-1}$ such that
 $$
 \sum_{i=1}^{n_1-1}\Aa(\Cb_i)= |t|^{2(1-\hat{\mu}_0)}((\xi^{(1)},\xi^{(1)}))_1.
 $$
We now remark that $\hat{M}_1^\tau$ is isometric to a subset of the Euclidean cone $\Cb$ defined by the metric $|z|^{-2\hat{\mu}_0}|dz|^2$ on $\C$. Since $\partial \hat{M}_1^\tau \simeq \gamma_\tau$ has constant curvature $1/R(\tau)$, $\gamma_\tau$ corresponds to  the set of points in $\Cb$ whose distance to the apex is $R(\tau)$.
Hence $\hat{M}_1^\tau$ is isometric to the flat surface defined by $|z|^{-2\hat{\mu}_0}|dz|^2$ on the disc $\Dis_\tau$.
It follows that $\hat{M}_1^\tau$ is isometric to the flat metric defined by $\omega_0$ on the set $\{p \in U, |F(p)| \leq  \tau\}$.

Let $\hat{M}^\tau$ be the flat surface obtained by gluing $\hat{M}_1^\tau$ to $M^\tau_0$ along $\gamma_\tau$. From the argument above, we conclude that $\hat{M}^\tau$ is isometric to $M_0$. Since $\Aa(M)=\Aa(\hat{M}^\tau)-\sum_{1 \leq i \leq n_1-1} \Aa(\Cb_i)=\Aa(M_0)-\sum_{1\leq i \leq n_1}\Aa(\Cb_i)$, we have
$$
(([\omega_{(m,t)}],[\omega_{(m,t)}])):=(([\omega_0],[\omega_0]))-|t|^{2(1-\hat{\mu}_0)}((\xi^{(1)},\xi^{(1)}))_1=((\xi^{(0)},\xi^{(0)}))_0-|t|^{2(1-\hat{\mu}_0)}((\xi^{(1)},\xi^{(1)}))_1.
$$
\end{proof}

\begin{Remark}\label{rk:Th:cone:angle}
 In \cite{Thurston98}, Thurston introduced a completion  $\ol{\Mod}^\mu_{0,n}$ of $\Mod_{0,n}$ with respect to the complex  hyperbolic metric induced by $\Lcal$. The space $\ol{\Mod}^\mu_{0,n}$ is equipped with a {\em cone-manifold} structure. In this setting, $m$  corresponds to a point in a stratum of codimension $n_1-1$ representing the flat surfaces on which all the cone points in $\Sig_1$ collide. The quantity $1-\hat{\mu}_0$ can be interpreted as the {\em scalar cone angle} at $m$ (see \cite[Sec. 3]{Thurston98}). Note also that if $m=(m_0,m_1)$ with $m_i\in \Mod_{0,n_i+1}$, then the flat surface corresponding to $m$ is uniquely determined (up to a rescaling) by $m_0$. Thus  for all $m'_1 \in \Mod_{0,n_1+1}$, the the point $m'=(m_0,m'_1)$ represents the same element of $\ol{\Mod}^\mu_{0,n}$.
\end{Remark}

\subsection{Proof of Proposition~\ref{prop:norm:bdry:gen}: general case}
\begin{proof}
 Let $\tilde{C}^j_{(m,t_1,\dots,t_j)}$ be as in the proof of Lemma~\ref{lm:1-form:glue:general}, where  $\tilde{C}^r_{(m,t_1,\dots,t_r)}=C_{(m,t)}$. Recall that $\tilde{C}^{j+1}_{(m,t_1,\dots,t_{j+1})}$ is obtained from $\tilde{C}^j_{(m,t_1,\dots,t_j)}$ and $C^{j+1}_m$ by a plumbing construction at the principal node of $C^{j+1}_m$. Therefore Proposition \ref{prop:norm:bdry:gen} follows from Proposition~\ref{prop:area:form:bdry:r1} and Lemma~\ref{lm:1-form:glue:general} by induction.
\end{proof}

\section{Singular K\"ahler-Einstein metrics}\label{section:SKE}

Our aim now is to explain that the metric constructed in the previous section on $\Mod_{0,n}$ is actually a singular K\"ahler-Einstein metric on $\ol{\Mod}_{0,n}$, and that this fact will enable us to compute the volume of $\Mod_{0,n}$ endowed with this metric.

\subsection{General setting}\label{general:section}

We first recall some basic facts about singular K\"ahler-Einstein metrics on projective varieties in a simplified setting as we will not need a very high degree of generality (for instance, see~\cite{BEGZ} and \cite{Guenancia} and the references therein for a more general exposition).  In general, we will identify a Hermitian metric on a complex manifold with its associated $(1,1)$-form. Given a divisor $D$, we will often use the same notation for the $(1,1)$-cohomology class it defines and, by a slight abuse of notation, we will denote both the associated holomorphic line bundle and the rank $1$ locally free sheaf of holomorphic sections by ${\cal O}(D)$. 

Let $X$ be a complex projective manifold of dimension $N$ and $D=\sum_{i=1}^k \la_i D_i$ an $\R$-divisor, i.e. for each $i$, $D_i$ is an irreducible and reduced subvariety of codimension 1 and $\la_i\in\R^*$. Such a $D$ can be endowed with a ``metric'' which writes locally on a suitable covering $(V_j)$ of $X$ as $e^{-\phi_j}$, where  the real functions $\phi_j$  satisfy compatibility properties analogous to the classical case of a $\Z$-divisor, i.e. a line bundle in the usual sense.  The regularity of $\phi_j$ will be discussed later on, but let us say that they are in $L^1_{\rm loc}$.  In general, abusing notation, we write $h_D=e^{-\phi_D}$ for this metric. The ``curvature'' of $h_D$ is the globally defined closed $(1,1)$-current $\imath\Theta(h_D)=\frac{\imath}{2\pi}\partial\bar\partial\phi_D= dd^c\phi_D$ and is a representative of the cohomology class $\{D\}\in H^{1,1}(X,\R)$. Here $d=\partial+\bar\partial$ and $d^c=\frac 1{2\pi\imath}(\partial-\bar\partial)$ which are both real operators.

For instance,  if we have some  section of ${\cal O}_X(D_i)$  whose zero divisor is $D_i$ and which is given by a holomorphic function $f_i$ in local coordinates, then we can take $\phi_D=\sum_{i=1}^k \la_i\log |f_i|^{2}$. By the Lelong-Poincar\'e formula we have $\imath\Theta(h_D)=\sum_{i=1}^k \la_i [D_i]=[D]$ where $[D_i]$ is the current of integration over $D_i$. It will be more convenient to choose an arbitrary smooth metric $h_0$ on ${\cal O}(D)$ and to write the previous metric $h_D=e^{-\varphi_D}h_0$ for some function $\varphi_D:X\ra [-\infty,+\infty)$ which is smooth on $X\backslash D$. If $\Theta_0$ is the curvature of $h_0$ then $[D]=\Theta_0+dd^c\varphi_D$. In particular, we have $\Theta_0=-dd^c\varphi_D$ on $X\backslash D$.

From now on, we assume that $D$ is a $\R$-divisor with simple normal crossings and that
the pair $(X,D)$ is klt (for Kawamata log terminal), which will just mean for us that $\la_i<1$. In particular, $D$ is not necessarily effective.

 Let us fix a smooth volume form $dV$ on $X$ which is the same as a smooth metric on the anti-canonical line bundle $-K_X:=\Lambda^N T_X$. The opposite of the $(1,1)$-form associated to the curvature of this metric, that we will denote in a standard way by $\Theta_{K_X}:=dd^c\log(dV)$, is a representative of the first Chern class $c_1(K_X)$. Assume moreover that we have a smooth metric $\Omega$ on the restriction of the tangent bundle $T_X$ to $X\backslash D$ which satisfies
\begin{itemize}
\item[(i)] $\Ric(\Omega)=-c\,\Omega$ on $X\backslash D$, where $\Ric(\Omega)=-dd^c\log(\Omega^N)$ is the Ricci form of $\Omega$ and $c$ is a positive real number;
\item[(ii)] $\Omega^N=e^{\varphi-\varphi_D}dV$, where $\varphi_D$ is as above, and  $\varphi$ is a continuous function on $X$ and smooth on $X\backslash D$.
\end{itemize}
The first condition means that $\Omega$ is a K\"ahler-Einstein metric on $X\backslash D$ with negative Einstein constant $-c$ and the second condition imposes some control on the behavior of $\Omega^N$ at infinity i.e. near the boundary divisor $D$.
We will need the following simple
\begin{Lemma}\label{zeroextension:lemma}
Let $\Omega$ be a smooth closed positive $(1,1)$-form on $X\backslash D$ and assume that $\Omega$ has continuous local potentials on $X$, i.e. for any $x\in X$, there exists a neighborhood $U$ of $x$ in $X$ and a function $\varphi_U:U\ra\R$ which is continuous on $U$ and smooth on $U\backslash D$ such that $\Omega=dd^c\varphi_U$ on $X\backslash U$. Then the extension $\tilde\Omega$ by $0$ of $\Omega$ to $X$ is a  well defined closed positive current on $X$, and for any $x$ and $U$, $\tilde\Omega_{|U}=dd^c\varphi_U$ in the sense of currents.
\end{Lemma}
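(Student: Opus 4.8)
The plan is to produce, near each point, a continuous plurisubharmonic potential for $\Omega$, to glue the resulting $dd^c$-currents into a single global closed positive current $S$, and then to identify $S$ with $\tilde\Omega$ by proving that $S$ carries no mass on $D$. Throughout, the hypothesis is read as: for each $x$ there is a neighborhood $U$ and a function $\varphi_U$ continuous on $U$, smooth on $U\smin D$, with $\Omega=dd^c\varphi_U$ on $U\smin D$.

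First I would upgrade each local potential $\varphi_U$ to a genuinely plurisubharmonic function on all of $U$. By hypothesis $\varphi_U$ is continuous on $U$ (hence locally bounded) and smooth with $dd^c\varphi_U=\Omega\geq 0$ on $U\smin D$, so $\varphi_U$ is plurisubharmonic on $U\smin D$. Since $D$ is a closed analytic subset, the removable singularity theorem for plurisubharmonic functions (a psh function on the complement of an analytic set, locally bounded above near it, extends across it as a psh function) provides a psh extension $\hat\varphi_U$ on $U$; because $D$ is nowhere dense and $\varphi_U$ is continuous, upper semicontinuity forces $\hat\varphi_U=\varphi_U$ everywhere, so $\varphi_U$ is itself psh on $U$. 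Consequently $T_U:=dd^c\varphi_U$ is a well-defined closed positive $(1,1)$-current of locally finite mass on $U$.

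Next I would check that these local currents glue. On an overlap $U\cap U'$ the difference $\varphi_U-\varphi_{U'}$ is continuous on $U\cap U'$ and pluriharmonic on $(U\cap U')\smin D$ (both potentials represent $\Omega$ there), hence pluriharmonic on all of $U\cap U'$ by the same removability; therefore $dd^c\varphi_U=dd^c\varphi_{U'}$ on the overlap, and the $T_U$ patch together into a global closed positive current $S$ on $X$ with $S=dd^c\varphi_U$ locally. By construction $S$ restricts to the smooth form $\Omega$ on $X\smin D$; in particular $\Omega$ has locally finite mass there, so its extension by zero $\tilde\Omega$ is a well-defined current of order $0$.

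It remains to prove the heart of the statement, namely $S=\tilde\Omega$, equivalently that $S$ puts no mass on $D$: once the trace of $S$ on $D$ vanishes one has $S=\mathbf{1}_{X\smin D}\,S$, which is exactly the trivial extension of $\Omega=S_{|X\smin D}$, and the local equality $\tilde\Omega_{|U}=dd^c\varphi_U$ follows. This is the step I expect to be the \emph{main obstacle}, and it is precisely where the continuity (local boundedness) of the potentials is used. The argument I have in mind is that a locally bounded potential forces all Lelong numbers of $S$ to vanish: $\nu(S,x)=\nu(\varphi_U,x)=0$ at every $x$, since $\varphi_U$ bounded near $x$ makes $\liminf_{z\to x}\varphi_U(z)/\log|z-x|=0$. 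By Siu's decomposition theorem (or, concretely, the classification of closed positive currents supported on a hypersurface via Federer's support theorem), the part of $S$ carried by the normal crossing hypersurface $D$ equals $\sum_j \nu(S,D_j)\,[D_j]$, where $\nu(S,D_j)$ is the generic Lelong number of $S$ along the component $D_j$; all these numbers vanish, so $S$ has zero mass on $D$. Equivalently, fixing a smooth Kähler form $\beta$ on $X$, the trace measure $\sigma_S=S\wedge\beta^{N-1}$ gives no mass to $D$, which legitimizes passing to the limit in a cut-off computation $\langle S,\chi_\varepsilon\eta\rangle\to\int_{X\smin D}\Omega\wedge\eta$, with $\chi_\varepsilon$ smooth cut-offs vanishing near $D$ and increasing to $1$ on $X\smin D$. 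Since $\langle S,\chi_\varepsilon\eta\rangle\to\langle S,\eta\rangle$ as well, we obtain $\langle S,\eta\rangle=\langle\tilde\Omega,\eta\rangle$ for every test form $\eta$, finishing the proof.
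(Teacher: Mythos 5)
Your proof is correct and follows essentially the same route as the paper's: extend $\varphi_U$ plurisubharmonically across $D$ via the removable-singularity theorem, use continuity to identify that extension with $\varphi_U$ itself, and use the vanishing of Lelong numbers (forced by local boundedness of the potential) to conclude that $dd^c\varphi_U$ puts no mass on $D$ and hence equals the zero-extension $\tilde\Omega$. The only differences are presentational: you first glue the local currents into a global current $S$ and spell out the Siu/Federer support argument, whereas the paper argues purely locally and leaves that last step implicit.
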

\begin{proof} By assumption, $\varphi_U$ is psh on $U\backslash D$ and by standard arguments (see~\cite{Dembook} for instance), it is known that ${\varphi_U}_{|U\backslash D}$ can be extended in a unique way as a psh function on the whole of $U$. In particular, the extension belongs to $L^1_{\rm Loc}(U)$. But  as $\varphi_U$ is continuous, this extension is actually $\varphi_U$. Moreover, still as $\varphi_U$ is continuous, its Lelong numbers along $D$ vanish hence $\tilde\Omega$ is well defined and coincides with $dd^c\varphi_U$ on $U$.
\end{proof}

 Set $\Theta:=\Theta_{K_X}+\Theta_0$. Remark that $\Theta$ is a smooth form on $X$. Since $\Theta_0=-dd^c\varphi_D$ on $X\backslash D$, (i) and (ii) imply that $c\,\Omega=\Theta+dd^c\varphi$ on $X\backslash D$ and in particular $\Theta+dd^c\varphi$ is a positive current on $X\backslash D$ (we also say that $\varphi$ is $\Theta$-psh). By the previous lemma (applied locally to $\varphi_U=\psi_U+\varphi_{|U}$ where $\psi_U$ is a local potential of the smooth form $\Theta$), the equality $c\,\Omega=\Theta+dd^c\varphi$ is valid on $X$ i.e. $c\,\Omega$ and $\Theta$ both are representatives of $c_1(K_X+D)$. Notice that since $dd^c\varphi$ puts no mass on $D$, we now obtain from (ii) that
 $$\Ric(\Omega)=-c\,\Omega+[D],$$
 namely $\Omega$ is a {\em singular K\"ahler-Einstein metric} attached to the pair $(X,D)$. As $\Ric(\Omega)$ is representative of $c_1(K_X)$, we have $c\{\Omega\}=c_1(K_X+D)$.

In general, if $T$ is a closed positive $(1,1)$-current on $X$, it is not possible to define $T^N$ in a reasonable way. However, if $T=\Theta+dd^c\varphi$ with $\Theta$ smooth and $\varphi$ locally bounded then, following the work of Bedford-Taylor~\cite{BT}, one can define $T^p$ for any $p\geq 1$ and moreover in our case where $T=\Omega$, $\{\Omega\}^N=\frac1{c^N}(K_X+D)^N$ (\cite{Demaillytrento}, Cor. 9.3). Finally, as the wedge product in the sense of Bedford-Taylor puts no mass on pluripolar sets (as a consequence of the Chern-Levine-Nirenberg inequality, see~\cite{Demaillytrento}, Prop. 1.11), we conclude that the volume of $X\backslash D$ endowed with the smooth metric $\Omega$ satisfies
\begin{equation}\label{eq:int:volume:KE}
\int_{X\backslash D}\Omega^N= \frac1{c^N}(K_X+D)^N.
\end{equation}

\subsection{Singular K\"ahler-Einstein metrics on $\ol{\Mod}_{0,n}$}
We shall apply now the formalism of the previous section to the situation where $X=\ol{\Mod}_{0,n}$,  and $X\backslash D=\Mod_{0,n}$ (here $N=n-3$). Recall that we defined $D_\mu:=\sum_\Scal \lambda_\Scal \,D_\Scal$ where
$$\lambda_\Scal=(|I_1|-1)(\mu_\Scal-1)+1$$
if $\Scal=\{I_0,I_1\}$  and, exchanging $I_0$ and $I_1$ if necessary, $\mu_\Scal:=\sum_{s\in I_1}\mu_s<1$ ($\mu_\Scal=\hat\mu_0$ in the notation of Section~\ref{sec:sect:L:near:bdry}). Observe that each $\lambda_\Scal$ is smaller than $1$. Here and in the sequel, the sums are always taken over all the (unordered) partitions $\Scal\in\Pcal$ i.e. satisfying $\min\{|I_0|,|I_1|\}\geq 2$.

\begin{Proposition}\label{prop:s-KE:metric:M0n} The  extension by $0$ of the Chern form $\Omega_\mu$ defined in Proposition~\ref{prop:Chern:form} is a singular K\"ahler-Einstein metric on $(\ol{\Mod}_{0,n},D_\mu)$. More precisely, $\Ric(\Omega_\mu)=-(N+1)\,\Omega_\mu+[D_\mu]$.
\end{Proposition}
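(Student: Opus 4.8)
The plan is to verify the hypotheses of the general formalism of Section~\ref{general:section} for the pair $(X,D)=(\ol{\Mod}_{0,n},D_\mu)$, the metric $\Omega=\Omega_\mu$ and the constant $c=N+1$, and then to read off the conclusion from the discussion there. The pair is klt since each $\lambda_\Scal<1$, as already observed. It then remains to check three things: that $\Omega_\mu$ admits continuous (indeed bounded) local potentials near $\partial\ol{\Mod}_{0,n}$; condition (i), the Einstein equation $\Ric(\Omega_\mu)=-(N+1)\,\Omega_\mu$ on $\Mod_{0,n}$; and condition (ii), the prescribed behaviour $\Omega_\mu^N=e^{\psi-\varphi_{D_\mu}}\,dV$ of the volume form along $D_\mu$, with $\psi$ continuous on $X$ and smooth on $X\smin D$. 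Once these hold, Lemma~\ref{zeroextension:lemma} extends $\Omega_\mu$ by $0$ to a closed positive current, the Bedford--Taylor product $\Omega_\mu^N$ charges no mass on $D_\mu$, and the identity $\Ric(\Omega_\mu)=-(N+1)\,\Omega_\mu+[D_\mu]$ follows precisely as in the general setting.

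For the potentials and (i), I would fix a boundary point $m$ in a codimension $r$ stratum and use the coordinates $(z^0,t_1,\dots,t_r,z^1,\dots,z^r)$ of Proposition~\ref{prop:Chern:form}, so that $\Omega_\mu=dd^c\log\varphi$ with $\varphi=1-\|z^0\|^2-\sum_{j=1}^r|P_j(t)|^2(1+\|z^j\|^2)$, and $\Omega_\mu$ is the pullback by the locally biholomorphic map $\Psi\colon(z^0,t,z^1,\dots,z^r)\mapsto w=(z^0,P_1(t),\dots,P_r(t),P_1(t)z^1,\dots,P_r(t)z^r)$ of the complex hyperbolic metric $\omega_\Bb:=dd^c\log(1-\|w\|^2)$ on the ball $\Bb\subset\CP^{N}$. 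By Remark~\ref{rk:Omega:cont}, $\varphi$ extends continuously to $\Vcal\times(\Dis_{c^2})^r$, and since $\Psi(m)=(z^0,0,\dots,0)$ lies in the \emph{interior} of $\Bb$ (the principal component of $C_m$ has finite positive area, so $\varphi(m)=1-\|z^0\|^2>0$), one may shrink the neighbourhood so that $\varphi$ is bounded between two positive constants; hence $\log\varphi$ is a bounded continuous local potential. On $\Mod_{0,n}=X\smin D$ the map $\Psi$ is étale (Proposition~\ref{prop:section:hol:etale}) and $\omega_\Bb$ is Kähler--Einstein with Einstein constant $-(N+1)$, so invariance of the Ricci form under local biholomorphisms yields (i).

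The main computation is (ii). Writing the hyperbolic volume form as $\omega_\Bb^N=\text{const}\cdot(1-\|w\|^2)^{-(N+1)}\bigwedge_k(\imath\,dw_k\wedge d\ol{w}_k)$ and pulling back gives, on $X\smin D$,
\[\Omega_\mu^N=\text{const}\cdot\varphi^{-(N+1)}\,|J|^2\,\bigwedge(\imath\,dz\wedge d\ol{z}),\]
where $J=\det D\Psi$. The map $\Psi$ is block--triangular ($z^0$ maps identically, $(P_1,\dots,P_r)$ depends only on $t$, and the block $P_jz^j$ only on $(t,z^j)$), whence
\[J=\det\Big(\tfrac{\partial P_i}{\partial t_j}\Big)_{i,j}\cdot\prod_{j=1}^r P_j(t)^{k_j-3}.\]
Using $P_j=(-1)^{L_j}\prod_{s}t_{i_s}^{\nu_{i_s}}$ from Lemma~\ref{lm:1-form:glue:general}, one finds $\det(\partial P_i/\partial t_j)=(\prod_j\nu_j)(\prod_iP_i)(\prod_jt_j)^{-1}$ (the ancestor matrix is unitriangular once the vertices are ordered by level $\Lev$), so $J=\text{const}\cdot\prod_{j}P_j^{\,k_j-2}t_j^{-1}$. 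Substituting $|P_j|^2=\prod_{i}|t_i|^{2\nu_i}$, the product over non-root vertices $\vv_i$ on the path from $\vv_0$ to $\vv_j$, the exponent of $|t_k|$ in $|J|^2$ equals $2\nu_k\sum_{j}(k_j-2)-2$, the sum over the vertices $j$ of the subtree of $\Tb$ below $\vv_k$. The tree identity $\sum_{j}(k_j-2)=|I_1^k|-1$ (each internal edge of the subtree contributes two node-marked points, the principal node contributes one, leaving the original marked points $I_1^k$) together with $\nu_k=1-\mu_{\Scal_k}$ (Lemma~\ref{lm:weight:of:comp}) turns this into $2(1-\mu_{\Scal_k})(|I_1^k|-1)-2=-2\lambda_{\Scal_k}$. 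As $t_k$ is a local equation for $D_{\Scal_k}$, we get $|J|^2=\text{const}\cdot\prod_k|t_k|^{-2\lambda_{\Scal_k}}$, so $\Omega_\mu^N=e^{\psi-\varphi_{D_\mu}}dV$ with $\psi=-(N+1)\log\varphi+(\text{smooth})$ continuous on $X$ and smooth on $X\smin D$, giving (ii).

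The heart of the proof, and the only genuinely nontrivial step, is this boundary computation: extracting the exact vanishing order of $J$ along each $D_{\Scal_k}$ and matching it to $\lambda_{\Scal_k}$. This is where the precise value $\lambda_\Scal=(|I_1|-1)(\mu_\Scal-1)+1$ is \emph{forced}, through the subtree identity on $\Tb$ and the component weight $\nu_k=1-\mu_{\Scal_k}$. A secondary point requiring care is that $\varphi$ stay bounded away from $0$ and $\infty$ near every boundary point --- equivalently, that the Deligne--Mostow/Thurston developing map send each boundary stratum into the interior of $\Bb$ --- since this is exactly what makes $\log\varphi$ a bounded potential, so that $\Omega_\mu^N$ is a well-defined Bedford--Taylor measure putting no mass on pluripolar sets and (ii) has the continuity required by the general setting.
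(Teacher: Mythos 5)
Your proposal is correct and follows essentially the same route as the paper's proof: both verify conditions (i) and (ii) of the general formalism of Section~\ref{general:section} using the local description of $\Omega_\mu$ from Proposition~\ref{prop:Chern:form}, the continuity statement of Remark~\ref{rk:Omega:cont} together with Lemma~\ref{zeroextension:lemma}, and the same Jacobian computation whose key point is the combinatorial identity $\sum_{j}(k_j-2)=|I_1^k|-1$ over the subtree below $\vv_k$, forcing the exponent $-\lambda_{\Scal_k}$ of $|t_k|^2$. Your write-up merely makes the block-triangular structure of $\det D\Psi$ more explicit, which the paper leaves as a ``straightforward computation''.
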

\begin{proof}
We first recall a few basic facts about complex hyperbolic $N$-space:  it can be seen as the unit ball $\Bb^N\subset\C^N\subset\CP^N$ and we can identify its group of biholomorphisms with ${\rm PU}(1,N)$. We restrict to $\Bb^N$ the exact sequence of vector bundles
$$0\ra L\ra\underline{\C}^{N+1}\ra Q\ra 0,
$$
where $L$ is the tautological line subbundle of the trivial bundle $\underline{\C}^{N+1}=\CP^N\times\C^{N+1}$ and $Q$ is the quotient bundle. The group ${\rm U}(1,N)$ acts on this exact sequence and preserves the constant Hermitian metric of signature $(1,N)$ on $\underline{\C}^{N+1}$. In restriction to $L$, this metric is positive definite and hence defines a Hermitian metric on the line bundle $L$. The Chern form $c_1(L)$ associated with this metric is a positive $(1,1)$-form on $\Bb^N$, and the corresponding metric has constant holomorphic sectional curvature: it is K\"ahler-Einstein and $\Ric(c_1(L))=-(N+1)\,c_1(L)$. That the Einstein constant $-c$ is equal to $-(N+1)$ is due to the fact that on $\Bb^N$, the tangent bundle is naturally isomorphic to ${\rm Hom}(L,Q)$ hence the canonical bundle can be identified with $L^{N+1}$.

As $\Lcal$ is the pullback of $L$ by an immersion, this proves that $\Omega_\mu$ defines a metric on $\Mod_{0,n}$ and that  $\Ric(\Omega_\mu)=-(N+1)\,\Omega_\mu$ on $\Mod_{0,n}$. Moreover, by Proposition~\ref{prop:Chern:form} and Remark~\ref{rk:Omega:cont}, the metric on $\Lcal$ is locally defined by continuous positive functions (even on the neighborhood of $D_\mu$) and so by Lemma~\ref{zeroextension:lemma}, $\Omega_\mu$ defines a closed positive $(1,1)$-current on $\ol{\Mod}_{0,n}$.

Now, we are going to see that $\Omega_\mu^N=e^{\varphi-\varphi_{D_\mu}}dV$ where $dV$ is a smooth volume form on $\ol{\Mod}_{0,n}$, $\varphi_{D_\mu}$ is associated with the divisor $D_\mu$ and $\varphi$ is continuous on $\ol{\Mod}_{0,n}$. Proposition~\ref{prop:Chern:form} gives the expression of the metric $\Omega_\mu$ in local coordinates centered at a point $m$ of $\ol{\Mod}_{0,n}$. More precisely, recall that locally it is the pullback of the complex hyperbolic metric on $\Bb^N$
by the multivalued map
$$(z^0,t_1,\dots,t_r,z^1,\dots, z^r)\mapsto(z^0,P_1(t),\dots,P_r(t),P_1(t)z^1,\dots,P_r(t) z^r)$$
where $r$ is the number of vital divisors crossing at $m$, $z^j\in\C^{k_j-3}$ and $P_j(t)$ is described in Lemma~\ref{lm:1-form:glue:general}.

As the volume form associated with the complex hyperbolic metric on $\Bb^N$ is
$$\left(\frac{\imath}{2\pi}\right)^N\frac1 {(1-\|w\|^2)^{N+1}} dw_1\wedge d\bar w_1\wedge\dots\wedge dw_N\wedge d\bar w_N
$$
a straightforward computation shows that in the above coordinates
$$\begin{array}{rcl}
\Omega_\mu^N&=&\DS\left(\frac{\imath}{2\pi}\right)^N\frac{dz^0\wedge d\bar z^0\wedge\bigwedge_{j=1}^r \nu_j^2|t_j|^{-2}|P_j(t)|^{2}dt_j\wedge d\bar t_j\wedge\bigwedge_{j=1}^r |P_j(t)|^{2(k_j-3)} dz^j\wedge d\bar z^j}{\bigl(1-||z^{0}||^2-\sum_{j=1}^r|P_j(t)|^2(1+||z^{j}||^2)\bigr)^{N+1}}\\
&=&\DS\left(\frac{\imath}{2\pi}\right)^N\frac{\prod_{j=1}^r \left(\nu_j^2|t_j|^{-2}|P_j(t)|^{2(k_j-2)}\right)\,  \bigwedge_{j=0}^r dz^j\wedge d\bar z^j\wedge \bigwedge_{j=1}^r dt_j\wedge d\bar t_j}{\bigl(1-||z^{0}||^2-\sum_{j=1}^r|P_j(t)|^2(1+||z^{j}||^2)\bigr)^{N+1}}
\end{array}
$$
where for any $j=0,\dots,r$,  $dz^j\wedge d\bar z^j$ stands for $\bigwedge_{i=1}^{k_j-3}dz^j_i\wedge d\bar z^j_i$.

If $r=0$, i.e. if $m\in{\Mod}_{0,n}$, then $\Omega_\mu$ is smooth in a neighborhood of $m$. Assume now that $r\geq 1$. The divisors $D_{\Scal_j}$ passing through $m$ are given by $t_j=0$, $1\leq j\leq r$, and they correspond to partitions $\Scal_j=\{I_0^j,I_1^j\}$ (see Section~\ref{sec:sect:L:bdry:gen}). We have to determine the power of $|t_j|^2$ in the numerator of $\Omega_\mu^N$. From the combinatorial description in Section~\ref{sec:sect:L:bdry:gen}, we see that $|t_j|^{2\nu_j}$ appears $(|I_1^j|-1)$ times (which is the dimension of the stratum ${\Mod}_{0,|I_1^j|+1}$ plus $1$) in total in the product of the $|P_i|^{2(k_i-2)}$ so that the power of $|t_j|^2$ is $(|I_1^j|-1)\nu_j-1=-(|I_1^j|-1)(\mu_{\Scal_j}-1)-1=-\lambda_{\Scal_j}$ hence
$$\Omega_\mu^N=\left(\frac{\imath}{2\pi}\right)^N\frac{\prod_{j=1}^r \nu_j^2\, \bigwedge_{j=1}^r dt_j\wedge d\bar t_j\wedge \bigwedge_{j=0}^r dz^j\wedge d\bar z^j}{\prod_{j=1}^r |t_j|^{2\lambda_{\Scal_j}}\bigl(1-||z^{0}||^2-\sum_{j=1}^r|P_j(t)|^2(1+||z^{j}||^2)\bigr)^{N+1}}=e^{\varphi-\varphi_{D_\mu}}dV
$$
where, up to the multiplication by smooth functions, $\varphi=-\log\bigl(1-||z^{0}||^2-\sum_{j=1}^r|P_j(t)|^2(1+||z^{j}||^2)\bigr)^{N+1}$, $\varphi_{D_\mu}=\sum_{j=1}^r \lambda_{\Scal_j}\log |t_j|^2$ and $dV=\bigwedge_{j=1}^r dt_j\wedge d\bar t_j\wedge \bigwedge_{j=0}^r dz^j\wedge d\bar z^j$.
The proof of the proposition now follows from the discussion in Section~\ref{general:section} and Remark~\ref{rk:Omega:cont}.
\end{proof}

\subsection{Proof of Theorem~\ref{theorem:main}}\label{sec:proof:main}
From Proposition~\ref{prop:s-KE:metric:M0n}, we have 
$$
\{\Omega_\mu\}=\frac{1}{N+1}\left(K_{\ol{\Mod}_{0,n}}+\sum_{\Scal} \lambda_\Scal D_\Scal \right).
$$
We first notice that the canonical divisor $K_{\ol{\Mod}_{0,n}}$ can be expressed in terms of the vital divisors $D_\Scal$.  Indeed,
$$K_{\ol{\Mod}_{0,n}}\sim\psi-2\delta$$
where $\psi=\sum_{s=1}^{n} \psi_s$ is the $\psi$-divisor class (see~\cite[p.~335]{ACG11}), $\delta=\sum_{\Scal} D_\Scal$ is the boundary divisor, and $\sim$ stands for the linear equivalence of divisors (see~\cite[p.~386]{ACG11}; here we use that $\ol{\Mod}_{0,n}$ is a fine moduli space and that the Hodge bundle is trivial on $\ol{\Mod}_{0,n}$).

For pairwise distinct $i, j, k\in\{1,\dots,n\}$, denote by $\delta_{i|jk}$ the divisor in $\ol{\Mod}_{0,n}$ corresponding to curves with a node separating the $i$-th marked point from the $j$-th and $k$-th marked points. It is well-known (see~\cite{zvonkine} or \cite[Chap. 17]{ACG11}) that for any such choice of $i,j,k$ we have $\psi_i\sim\delta_{i|jk}$ hence
$$(n-1)(n-2)\psi\sim\sum_{i,j,k}\delta_{i|jk}=\sum_\Scal\big(|I_0|\,|I_1|(|I_1|-1)+|I_1|\,|I_0|(|I_0|-1)\bigr)D_\Scal=(n-2)\sum_\Scal |I_0|\,|I_1| \,D_\Scal
$$
and substituting $\psi$ in the above expression of $K_{\ol{\Mod}_{0,n}}$ we get
$$K_{\ol{\Mod}_{0,n}}\sim_\Q\sum_\Scal \frac{\bigl(|I_0|-2\bigr) \bigl(|I_1|-2\bigr)-2}{n-1} \,D_\Scal
$$
where $\sim_\Q$ stands for the $\Q$-linear equivalence of divisors. Thus, we obtain
$$K_{\ol{\Mod}_{0,n}}+\sum_\Scal\lambda_\Scal\,D_\Scal\sim_\Q\sum_\Scal \Bigl(|I_1|-1\Bigr) \biggl(\mu_\Scal-\frac{|I_1|}{N+2}\biggr) \,D_\Scal.
$$
Now, recall that $N=n-3$ and notice that
$$\begin{array}{rcl}
\displaystyle 2(n-1)\Bigl(|I_1|-1\Bigr) \biggl(\mu_\Scal-\frac{|I_1|}{N+2}\biggr) & = & 2\bigl(|I_1|-1\bigr)\bigl((n-1)\mu_\Scal-|I_1|\bigr)\\
&=&\bigl(n-2+|I_1|-|I_0|\bigr)(n-1)\mu_\Scal-2|I_1|\bigl(|I_1|-1\bigr)\\
&=&(n-2)(n-1)\mu_\Scal +\bigl(|I_1|-|I_0|\bigr)\bigl(|I_1|+|I_0|-1\bigr)\mu_\Scal-2 |I_1|\bigl(|I_1|-1\bigr)\\
&=&(n-2)(n-1)\mu_\Scal +\Bigl(|I_1|\bigl(|I_1|-1\bigr)-|I_0|\bigl(|I_0|-1\bigr) \Bigr)\mu_\Scal-2 |I_1|\bigl(|I_1|-1\bigr)\\
&=&(n-2)(n-1)\mu_\Scal - |I_0|\bigl(|I_0|-1\bigr)\mu_\Scal-(2-\mu_\Scal)|I_1|\bigl(|I_1|-1\bigr).
\end{array}
$$
By a similar computation as above we get\footnote{we are grateful to D.~Zvonkine for explaining this trick to us}
$$\begin{array}{rcl}
\displaystyle(n-1)(n-2)\sum_i\mu_i\,\psi_i&\sim&\displaystyle\sum_{i,j,k}\mu_i\,\delta_{i|jk}\\
&=&\displaystyle\sum_\Scal\Bigl( |I_1|\bigl(|I_1|-1\bigr)\sum_{i\in I_0}\mu_i+|I_0|\bigl(|I_0|-1\bigr)\sum_{i\in I_1}\mu_i\Bigr)D_\Scal\\
&=&\displaystyle\sum_\Scal\Bigl( |I_1|\bigl(|I_1|-1\bigr)(2-\mu_\Scal)+|I_0|\bigl(|I_0|-1\bigr)\mu_\Scal\Bigr)D_\Scal\\
\end{array}
$$
and therefore
$$\frac2{(N+1)} \biggl(K_{\ol{\Mod}_{0,n}}+\sum_\Scal\lambda_\Scal\,D_\Scal\biggr)\sim_\Q -\sum_s\mu_s\,\psi_s+\sum_\Scal \mu_\Scal\,D_\Scal.
$$
\begin{Remark}
If we define $\Pcal'$ to be the set of unordered partitions of $\{1,\dots,n\}$ into two non-empty subsets $I_0\sqcup I_1$ then with the convention $D_{\{s\},\{s\}^c}=-\psi_s$, we get the expression
$$
\{\Omega_\mu\}=\frac{1}{2}\sum_{\Scal\in \Pcal'}\mu_\Scal D_\Scal.
$$
\end{Remark}
Finally, using  \eqref{eq:int:volume:KE} we obtain the formula stated in Theorem~\ref{theorem:main} for the volume of $\Mod_{0,n}$ endowed with the metric $\Omega_\mu$:
$$
\int_{\Mod_{0,n}}\Omega_\mu^{N}=\frac{1}{(N+1)^N}\left(K_{\ol{\Mod}_{0,n}}+\sum_\Scal\lambda_\Scal\,D_\Scal\right)^N=\frac1{2^N}\left(\sum_{\Scal\in\Pcal'} \mu_\Scal\,D_{\Scal}\right)^N.
$$

\subsection{Comparison with McMullen's formula}\label{sec:proof:cor:McM}

In~\cite{McMullen}, C. McMullen proves a Gauss-Bonnet formula for cone manifolds. Using in particular the fact that on the unit $N$-ball there exists a ${\rm PU}(1,N)$-invariant metric with constant holomorphic sectional curvature, the formula enables him to calculate the volume of $\Mod_{0,n}$ endowed with the metric $\Omega_\mu$ by computing the orbifold Euler characteristic of $\ol{\Mod}_{0,n}$ (here, ``orbifold'' has to be taken in a very general sense).
From this point of view, our strategy is somewhat  similar to this approach,  since we can interpret $c_1\bigl(K_{\ol{\Mod}_{0,n}}+D_\mu\bigr)$ as an orbifold first Chern class.
If $X$ is a smooth $N$-ball quotient then by the Hirzebruch proportionality theorem, the total Chern class of $X$ is given by
$$
c(X)=\left(1+\frac{c_1(X)}{N+1}\right)^{N+1}=\left(1-\frac{c_1(K_X)}{N+1}\right)^{N+1}
$$
\noindent hence we have the following equality
$$c_1^N(K_X)=(-1)^N(N+1)^{N-1}\chi(X)$$
(where $\chi(X)=c_N(X)$ is the Euler characteristic of $X$). In general, the above equalities make sense at the level of the ${\rm PU}(1,N)$-invariant forms on $\Bb^N$ which represent the respective cohomology classes. As the metric $g_\mu$ of McMullen is normalized in order to have constant holomorphic sectional curvature $-1$, we have ${\rm Ric}(g_\mu)=-\frac 1{2\pi}\frac{N+1}2 g_\mu$ hence,  if $\omega_\mu$ is the K\"ahler form associated with $g_\mu$, $c_1(X)=-\frac {N+1}{4\pi}\omega_\mu$  and $c_N(X)=(-1)^N\frac{(N+1)}{(4\pi)^N}\omega_\mu^N$, as pullback of ${\rm PU}(1,N)$-invariant forms on $\Bb^N$. As a consequence, the volume we compute and the one computed by McMullen are related by
$$\int_{\Mod_{0,n}}\Omega_\mu^{N}=\frac{1}{(4\pi)^N}\int_{\Mod_{0,n}}\omega_\mu^N=\frac{N!}{(4\pi)^N}\,{\rm vol}(\Mod_{0,n},g_\mu)=\frac{(-1)^{N}}{N+1}\,\chi\bigl(P(\mu)\bigr)
$$
where $P(\mu)$ is the orbifold associated with the weights $\mu$ considered by McMullen.  In summary, we get the
\begin{Corollary}\label{coro:mcmullen}
$$
\left(-\sum_s\mu_s\,\psi_s+\sum_\Scal \mu_\Scal\,D_\Scal\right)^N=\frac{(-2)^{N}}{(N+1)}\sum_\Qcal (-1)^{|\Qcal|+1}(|\Qcal|-3)!\prod_{B\in\Qcal}\max\Bigl(0,1-\sum_{i\in B}\mu_i\Bigr)^{|B|-1}.
$$
where $\Qcal$ ranges over all partitions of the indices $\{1,\dots,n\}$ into blocks $B$.
\end{Corollary}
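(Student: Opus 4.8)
The plan is to route between the two volume computations via their common intermediate quantity $\chi(P(\mu))$, the orbifold Euler characteristic of the orbifold $P(\mu)$ associated with the weights. First I would read off from the second equality in~\eqref{eq:main:thm} of Theorem~\ref{theorem:main} that the left-hand side of the corollary is exactly $2^N\int_{\Mod_{0,n}}\Omega_\mu^N$. In this way the entire statement is reduced to an identity for the complex hyperbolic volume $\int_{\Mod_{0,n}}\Omega_\mu^N$.

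Next I would use the proportionality already obtained in the discussion preceding the statement, namely $\int_{\Mod_{0,n}}\Omega_\mu^N=\frac{(-1)^N}{N+1}\chi(P(\mu))$. This rests on the Hirzebruch proportionality principle for the ${\rm PU}(1,N)$-invariant forms on $\Bb^N$ representing $c_1^N(K_X)$ and $c_N(X)$, together with the normalization of McMullen's metric $g_\mu$ to constant holomorphic sectional curvature $-1$; no new computation is needed here. Combining this with the previous reduction turns the left-hand side into $\frac{(-2)^N}{N+1}\chi(P(\mu))$, so that the corollary becomes the assertion that $\chi(P(\mu))$ equals the displayed alternating sum over set partitions.

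It then remains to invoke McMullen's Gauss-Bonnet theorem for cone manifolds~\cite{McMullen}, whose main output is precisely a formula for $\chi(P(\mu))$ as a sum over all partitions $\Qcal$ of $\{1,\dots,n\}$, each block $B$ contributing the factor $\max(0,1-\sum_{i\in B}\mu_i)^{|B|-1}$ and each partition weighted by $(-1)^{|\Qcal|+1}(|\Qcal|-3)!$. Substituting this expression finishes the argument. The one point requiring care — and the main obstacle — is to match McMullen's conventions exactly: one must verify that his normalization of the orbifold Euler characteristic, the global sign, and the combinatorial bookkeeping (the coefficient $(-1)^{|\Qcal|+1}(|\Qcal|-3)!$ coming from inclusion-exclusion over the strata where clusters of cone points collapse) agree term by term with the right-hand side as written.
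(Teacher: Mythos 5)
Your proposal is correct and follows essentially the same route as the paper: reduce the left-hand side to $2^N\int_{\Mod_{0,n}}\Omega_\mu^N$ via the second equality of Theorem~\ref{theorem:main}, invoke the Hirzebruch-proportionality/normalization discussion to write this volume as $\frac{(-1)^N}{N+1}\chi\bigl(P(\mu)\bigr)$, and then substitute McMullen's Gauss-Bonnet partition formula for $\chi\bigl(P(\mu)\bigr)$. Your closing caveat about matching McMullen's sign and normalization conventions is precisely the content of the paper's computation relating $\Omega_\mu$ to $\omega_\mu$ (namely $c_1(X)=-\frac{N+1}{4\pi}\omega_\mu$ and $c_N(X)=(-1)^N\frac{N+1}{(4\pi)^N}\omega_\mu^N$), so nothing is missing.
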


\section{A more algebro-geometric approach}\label{sec:mu:rat:ext:sect}

\subsection{Kawamata's extension}\label{sec:kawa:ext:def}
Through out this section, we will assume that all the weights $(\mu_s)$ are rational numbers.  If $d\in\N^*$ is such that $d\mu_s\in\N$ for all $s$, then the local system $\Lb^{\otimes d}$ on $\CP^1\smin \Sig$ is trivial. Y.~Kawamata proves in~\cite{kaw} that the line bundle $\Lcal^{\otimes d}$ has a natural extension to $\ol{\Mod}_{0,n}$ that we denote abusively by $\hat{\Lcal}^{\otimes d}$ (i.e. $\hat{\Lcal}$ is only a $\Q$-divisor).
This extension is constructed in the following way:  it follows immediately from the description in Section~\ref{sec:def:L}  that $\Lcal^{\otimes d}$ is isomorphic to $\pi_*\Ocal\bigl(d(K_{{\UC}_{0,n}/{\Mod}_{0,n}}+\sum_s \mu_s\,\Gamma_s)\bigr)$, where $\Gamma_s$ is the divisor given by the $s$-th section of the universal curve, and $K_{{\UC}_{0,n}/{\Mod}_{0,n}}:=K_{{\UC}_{0,n}}\otimes K^{\vee}_{{\Mod}_{0,n}}$ is the relative canonical bundle of the
fibration $\pi_{|{\UC}_{0,n}}:{\UC}_{0,n}\ra {\Mod}_{0,n}$. 
Observe that for any $m\in{\Mod}_{0,n}$, ${\rm deg}\Bigl({K_{{\UC}_{0,n}/{\Mod}_{0,n}}}_{|\pi^{-1}(m)}\Bigr)={\rm deg}\bigl(K_{\CP^1}\bigr)=-2$ and since $\sum_s\mu_s=2$, the restriction of ${\cal O}\bigl(d(K_{{\UC}_{0,n}/{\Mod}_{0,n}}+\sum_s \mu_s\,\Gamma_s)\bigr)$ to any fiber of $\pi_{|{\UC}_{0,n}}$ is trivial.
Therefore, $\pi_*\Ocal\bigl(d(K_{{\UC}_{0,n}/{\Mod}_{0,n}}+\sum_s \mu_s\,\Gamma_s)\bigr)$ is indeed a rank $1$ invertible sheaf on ${\Mod}_{0,n}$.

The first task is to extend $\Ocal\bigl(d(K_{{\UC}_{0,n}/{\Mod}_{0,n}}+\sum_s \mu_s\,\Gamma_s)\bigr)$ to a line bundle on $\ol{\UC}_{0,n}$ whose restriction to each fiber of $\pi$ over $\ol{\Mod}_{0,n}$ is still trivial. Kawamata remarks that a natural such extension is given by the divisor $d\Lambda$ where
$$
\Lambda:=K_{{\ol\UC}_{0,n}/\ol{\Mod}_{0,n}}+\sum_s \mu_s\,\Gamma_s-\sum_\Scal (1-\mu_\Scal) F^1_\Scal
$$
and the effective divisor $\sum_\Scal (1-\mu_\Scal) F^1_\Scal$ is defined in the following way: for any $\Scal\in\Pcal$, $\pi^{-1}(D_\Scal)$ is a divisor in ${\ol\UC}_{0,n}$ with two irreducible components $F^0_\Scal$ and $F^1_\Scal$. Over a generic point of  $D_\Scal$, these two components correspond respectively to the two irreducible components of the nodal curve  associated with the partition $\Scal=\{I_0,I_1\}$ (recall that by definition, $\mu_\Scal:=\sum_{s\in I_1}\mu_s<1$).

 It is easy to see that the restriction to each fiber $\pi^{-1}(m) \subset \ol{\UC}_{0,n}$ of the line bundle associated with the above divisor is indeed trivial for any $m\in\ol{\Mod}_{0,n}$. It is sufficient to check that its degree is $0$ in restriction to each irreducible component of any stable curve $C_m=C_m^0\cup\dots\cup C_m^r$. First remark that $\pi^{-1}(D_\Scal)=F^0_\Scal+F^1_\Scal$ is trivial in restriction to $C_m$ and so, for any $j$, ${F^1_\Scal}_{|C^j_m}=-{F^0_\Scal}_{|C^j_m}$. As a first consequence, if $F^0_\Scal\cap C^j_m=\vide$ or $F^1_\Scal\cap C^j_m=\vide$, then ${F^1_\Scal}_{|C^j_m}=0$. Moreover, noticing that $1-\mu_\Scal=1-\sum_{s\in I_1}\mu_s=\sum_{s\in I_0}\mu_s-1$ for any $\Scal$,  we have (using the notation of Section~\ref{sec:princ:component})
$$
d\Bigl(K_{{\ol\UC}_{0,n}/\ol{\Mod}_{0,n}}+\sum_s \mu_s\,\Gamma_s-\sum_\Scal (1-\mu_\Scal) F^1_\Scal\Bigr)_{|C^j_m}=d\Bigl(K_{\CP^1}+\sum_{i=1}^{s_j} y_{i}+\sum_{s\in\Sig_j}\mu_s x_s+\sum_{i=1}^{s_j} (\hat\mu(y_{i})-1) y_{i}\Bigr)
$$
whose degree is indeed equal to $0$. Finally, one defines $\hat{\Lcal}^{\otimes d}:=\pi_*\Ocal(d\Lambda)$.

\begin{Remark} In fact, we also have $\hat{\Lcal}^{\otimes d}=\pi_*{\cal O}\bigl(d\bigl(K_{{\ol\UC}_{0,n}/\ol{\Mod}_{0,n}}+\sum_s \mu_s\,\Gamma_s)\bigr)$, but the divisor $d\Lambda$ is more natural, even if less obvious at first glance; for instance, one has ${\cal O}(d\Lambda)=\pi^*\hat{\Lcal}^{\otimes d}$.
\end{Remark}


\subsection{Trivializations of Kawamata's extension}\label{sec:kawa}

In Sections~\ref{sec:sect:L:near:bdry} and~\ref{sec:sect:L:bdry:gen}, for each point $m\in\ol{\Mod}_{0,n}$ we found a neighborhood $\Ucal$ of $m$ in $\ol{\Mod}_{0,n}$ and we constructed a holomorphic section of $\Lcal$ on $\Ucal\cap {\Mod}_{0,n}$ that we denote by $\Phi_\Ucal$ or simply by $\Phi$.
We can regard $\Phi^{\otimes d}$ as a holomorphic section of ${\cal O}\bigl(d(K_{{\UC}_{0,n}/{\Mod}_{0,n}}+\sum_s \mu_s\,\Gamma_s)\bigr)$ on $\pi^{-1}(\Ucal\cap {\Mod}_{0,n})$.
From the description in Section~\ref{sec:sect:L:near:bdry}, we see immediately that as such, $\Phi^{\otimes d}$ extends as a section of ${\cal O}\bigl(d(K_{\ol{\UC}_{0,n}/\ol{\Mod}_{0,n}}+\sum_s \mu_s\,\Gamma_s)\bigr)$ on the whole of $\pi^{-1}(\Ucal)$ if $m$ is a generic point of $D_\Scal$.
Moreover, it vanishes exactly on $F^1_\Scal$ up to the order $d(1-\mu_\Scal)$, i.e. it is a non-vanishing holomorphic section of the extension ${\cal O}(d\Lambda)$ on $\pi^{-1}(\Ucal)$, hence providing a local trivialization of the line bundle ${\cal O}(d\Lambda)$  and so a trivialization of $\hat{\Lcal}^{\otimes d}$ on $\Ucal$.
In the same way, it can be proven  that $\Phi^{\otimes d}$ provides a local trivialization of $\hat{\Lcal}^{\otimes d}$ near any point $m$ of $\ol{\Mod}_{0,n}$ but we omit the proof since we only need to consider trivializations near generic points of $\partial \ol{\Mod}_{0,n}$.

It is also important to note that by Proposition~\ref{prop:Chern:form}, the Hermitian form $((.,.))$ defined in Section~\ref{sec:hermstruct} induces a continuous metric on the bundle $\hat{\Lcal}^{\otimes d}$, whose curvature on ${\Mod}_{0,n}$ is $d\Omega_\mu$.
Thus, the same arguments relying on Lemma~\ref{zeroextension:lemma} which enabled us to conclude that the extension by $0$ of $(N+1)\,\Omega_\mu$ is a representative of $c_1\bigl(K_{\ol{\Mod}_{0,n}}+D_\mu\bigr)$ prove that the extension by $0$ of $\Omega_\mu$ is a representative of $c_1(\hat\Lcal)=\frac1dc_1(\hat\Lcal^{\otimes d})$.
Summing up, we get the
\begin{Proposition}\label{prop:extend:L}
 Assume that $0<\mu_s< 1, \mu_s \in \Q$ for all $s \in \{1,\dots,n\}$. Let $d\in \N$ be a positive integer such that $d\mu_s \in \N$ for all $s$. Then the push-forward $\hat{\Lcal}^{\otimes d}$ of the Kawamata line bundle ${\cal O}(d\Lambda)$ is an extension of $\Lcal^{\otimes d}$  over $\ol{\Mod}_{0,n}$. If $m \in \ol{\Mod}_{0,n}$ is contained in a stratum of codimension $r$, with a neighborhood identified with $\Vcal\times(\Dis_{c^2})^r$, then the section $\Phi^{\otimes d}$ on $\Vcal\times(\Dis^*_{c^2})^r$ defined in Section~\ref{sec:sect:L:bdry:gen} extends naturally to a nowhere vanishing section of $\hat{\Lcal}^{\otimes d}$ in $\Vcal\times(\Dis_{c^2})^r$.
Moreover, the extension by $0$ of $\Omega_\mu$ is a representative of $c_1(\hat\Lcal)$.
\end{Proposition}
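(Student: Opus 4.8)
The plan is to assemble the three assertions of the proposition from the material prepared in Sections~\ref{sec:sect:L:near:bdry}, \ref{sec:sect:L:bdry:gen} and~\ref{sec:kawa:ext:def}, the only genuinely new ingredient being a vanishing-order computation carried out in the total space $\ol{\UC}_{0,n}$. The assertion that $\hat{\Lcal}^{\otimes d}$ extends $\Lcal^{\otimes d}$ is formal: over $\Mod_{0,n}$ the divisor $\Lambda$ differs from $K_{\UC_{0,n}/\Mod_{0,n}}+\sum_s\mu_s\,\Gamma_s$ only by the terms $\sum_\Scal(1-\mu_\Scal)F^1_\Scal$, which are supported over $\partial\ol{\Mod}_{0,n}$. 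Hence $\hat{\Lcal}^{\otimes d}=\pi_*\Ocal(d\Lambda)$ restricts to $\pi_*\Ocal\bigl(d(K_{\UC_{0,n}/\Mod_{0,n}}+\sum_s\mu_s\,\Gamma_s)\bigr)\simeq\Lcal^{\otimes d}$ on $\Mod_{0,n}$, as recalled in Section~\ref{sec:kawa:ext:def}.

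For the trivialization, I would first regard $\Phi^{\otimes d}$ as a holomorphic section of $\Ocal\bigl(d(K_{\ol{\UC}_{0,n}/\ol{\Mod}_{0,n}}+\sum_s\mu_s\,\Gamma_s)\bigr)$ over $\pi^{-1}(\Vcal\times(\Dis^*_{c^2})^r)$ and then determine its vanishing order along each boundary component $F^1_{\Scal_j}$. Working near a \emph{generic} point of $F^1_{\Scal_j}$ reduces the analysis to the codimension-one plumbing situation of Section~\ref{sec:sect:L:near:bdry}: by Lemma~\ref{lm:1-form:glue:general} the restriction of $\omega_{(m,t)}$ to the non-principal component attached at the $j$-th node is $P_j(t)\omega_j$, and $P_j(t)$ carries the factor $t_j^{\nu_j}$. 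Since by Lemma~\ref{lm:weight:of:comp}(b) the weight of that component is $\nu_j=1-\mu_{\Scal_j}$, and $d\mu_{\Scal_j}=\sum_{s\in I_1^j}d\mu_s\in\N$, the section $\Phi^{\otimes d}$ vanishes to the integral order $d(1-\mu_{\Scal_j})$ along $F^1_{\Scal_j}$. Subtracting $\sum_\Scal d(1-\mu_\Scal)F^1_\Scal$ as in $d\Lambda$ exactly absorbs this vanishing, so $\Phi^{\otimes d}$ becomes a nowhere-vanishing holomorphic section of $\Ocal(d\Lambda)$ on $\pi^{-1}(\Vcal\times(\Dis_{c^2})^r)$; that it has no zeros on any fiber follows because each $\omega_j$ is invertible away from marked points and nodes (Proposition~\ref{prop:mer:form:uniq}) and $\Ocal(d\Lambda)$ is fiberwise trivial. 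Pushing forward, $\Phi^{\otimes d}$ yields the desired trivializing section of $\hat{\Lcal}^{\otimes d}$ on $\Vcal\times(\Dis_{c^2})^r$.

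The main obstacle will be this vanishing-order bookkeeping. One must verify carefully, in local coordinates on $\ol{\UC}_{0,n}$ around a generic point of $F^1_{\Scal_j}$, that the plumbing factor $P_j$ contributes \emph{exactly} the order $1-\mu_{\Scal_j}$, with no spurious contribution from the marked-point divisors $\Gamma_s$ nor from the other nodes, so that the cancellation against $(1-\mu_\Scal)F^1_\Scal$ is precise and $\Phi^{\otimes d}$ acquires neither a zero nor a pole along $F^1_\Scal$. The identity $\nu_j=1-\mu_{\Scal_j}$ from Lemma~\ref{lm:weight:of:comp}, which matches the combinatorial weights of the non-principal components with the coefficients appearing in $\Lambda$, is the crux of this step; the extension to all of $\Vcal\times(\Dis_{c^2})^r$ (not merely over generic boundary points) then follows because the order of vanishing of a section along an irreducible divisor is read off generically.

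Finally, for the cohomological statement I would transport the Hermitian form $((.,.))$ to $\hat{\Lcal}^{\otimes d}$ through the trivialization $\Phi^{\otimes d}$. By Proposition~\ref{prop:Chern:form} the local weight of the induced metric is governed by $d\log\bigl(1-\|z^0\|^2-\sum_{j=1}^r|P_j(t)|^2(1+\|z^j\|^2)\bigr)$, which by Remark~\ref{rk:Omega:cont} extends continuously (and stays strictly positive inside the logarithm) up to and including the boundary; hence $\hat{\Lcal}^{\otimes d}$ carries a continuous metric whose curvature on $\Mod_{0,n}$ equals $d\,\Omega_\mu$. Applying Lemma~\ref{zeroextension:lemma} to the continuous local potentials of this metric shows that the extension by $0$ of $d\,\Omega_\mu$ is a closed positive current representing $c_1(\hat{\Lcal}^{\otimes d})$; dividing by $d$ gives that the extension by $0$ of $\Omega_\mu$ represents $c_1(\hat{\Lcal})=\tfrac1d c_1(\hat{\Lcal}^{\otimes d})$, completing the proof.
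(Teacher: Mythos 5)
Your proposal is correct and follows essentially the same route as the paper: regard $\Phi^{\otimes d}$ as a section of $\Ocal\bigl(d(K_{\ol{\UC}_{0,n}/\ol{\Mod}_{0,n}}+\sum_s\mu_s\,\Gamma_s)\bigr)$, show it vanishes to order exactly $d(1-\mu_\Scal)$ along $F^1_\Scal$ so that it becomes a nowhere vanishing section of $\Ocal(d\Lambda)$ and hence a trivialization of $\hat{\Lcal}^{\otimes d}$, then obtain the Chern-class statement from the continuous metric given by Proposition~\ref{prop:Chern:form} and Remark~\ref{rk:Omega:cont} together with Lemma~\ref{zeroextension:lemma}. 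The only notable difference is that the paper performs the vanishing-order computation only near generic boundary points and explicitly omits the codimension-$r$ case, whereas you sketch that case using $P_j(t)$, the identity $\nu_j=1-\mu_{\Scal_j}$ from Lemma~\ref{lm:weight:of:comp}, and the fact that vanishing orders along irreducible divisors are determined at generic points — a legitimate completion of the step the paper leaves out.
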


\begin{Remark}
 One can also prove that the restriction of $\hat{\Lcal}^{\otimes d}$ to the stratum of $m$ is the pull-back of the $d$-tensor power of the induced line bundle on the $\mu-$principal factor $\ol{\Mod}_{0,k_0}$ (see Section~\ref{sec:sect:L:bdry:gen} for the definition of $\mu$-principal component/factor).
\end{Remark}

Actually,  the above extension of $\Phi^{\otimes d}$ can be described in more  {\it concrete} terms. For this, let us give an alternative description of a plumbing family.
Let $m$ be a generic point of some divisor $D_\Scal$ with $\Scal=\{I_0,I_1\}$, and let $C_m^0=(\CP^1,(0,(x_s)_{s\in I_0})$ and $C_m^1=(\CP^1,(0,(y_s)_{s\in I_1})$ (here we denote the marked points on $C_m^1$ by $y_s$ rather than $x_s$) where we use  the conventions of Section~\ref{sec:sect:L:near:bdry}.
Consider the family $\ol\Ccal$ of rational curves above a disc $\Dis$ centered at $0$ which is described (in inhomogenous coordinates) by $\varpi:\ol\Ccal=\bigl\{(x,y,t)\in\CP^1\times\CP^1\times\Dis, \; xy=t\bigr\}\ra \Dis$, $(x,y,t)\mapsto t$ (note that the fibers are all smooth except the one above $0$ which is a nodal curve with $(0,0)$ as only node).
In this setting, $\Gamma_s=\{x_s\}\times \CP^1\times \Dis\cap \ol\Ccal$, for $s \in I_0$, and $\Gamma_s=\CP^1\times \{y_s\}\times \Dis\cap\ol\Ccal$, for $s \in I_1$.
Remark however that in general, this family is not isomorphic to the one described in the course of Section~\ref{sec:sect:L:near:bdry}, where we used additional changes of coordinates $F$ and $G$.
Let $p_i:\CP^1\times\CP^1\times\Dis \ra \CP^1$, $i=0,1$, be the natural projection onto the $(i+1)$-th factor $\CP^1$.
Define on $\CP^1$  two sections
$$
\omega_0:=\frac{(dz)^{\otimes d(2-\mu_\Scal)}}{\prod_{s\in I_0}(z-x_s)^{2d\mu_s}}\in\Gamma\Bigl(\CP^1,d\bigl((2-\mu_\Scal)K_{\CP^1}+2\sum_{s\in I_0} \mu_s x_s\bigr)\Bigr)
$$
and
$$
\omega_1:=\frac{(dz)^{\otimes d\mu_\Scal}}{\prod_{s\in I_1}(z-y_s)^{2d\mu_s}}\in\Gamma\Bigl(\CP^1,d\bigl(\mu_\Scal K_{\CP^1}+2\sum_{s\in I_1} \mu_s y_s\bigr)\Bigr).
$$
Then $\omega=p_0^*\omega_0\otimes p_1^*\omega_1$ induces a section of ${\cal O}\bigl(2d(K_{\ol\Ccal_m/\Dis}+\sum_s\mu_s\Gamma_s)\bigr)$ on $\ol\Ccal$.
Near the point $(0,0,0)\in\ol\Ccal$, in the coordinates $(x,y)$, a trivialization of $K_{\ol\Ccal/\Dis}$ is provided by
$\kappa=\frac{1}{2}\left(\frac{dx}{x}-\frac{dy}{y}\right)=\frac{dx}{x}=-\frac{dy}{y}$. Since we have
$$
\omega=\frac{(dx)^{\otimes d(2-\mu_\Scal)}\otimes(dy)^{\otimes d\mu_\Scal}}{\prod_{s\in I_0}(x-x_s)^{2d\mu_s}\prod_{s\in I_1}(y-y_s)^{2d\mu_s}},
$$
the section induced by $\omega$ (in restriction to $\ol\Ccal$) is given by
$$
(-1)^{d\mu_\Scal}\frac{x^{d(2-\mu_\Scal)}y^{d\mu_\Scal}}{\prod_{s\in I_0}(x-x_s)^{2d\mu_s}\prod_{s\in I_1}(y-y_s)^{2d\mu_s}} \kappa^{\otimes 2d}.
$$
If we factorize by $(-xy)^{d\mu_\Scal}=(-t)^{d\mu_\Scal}$ and take the ``square root'', we find
$$
\tau_m:=\frac{x^{d(1-\mu_\Scal)}}{\prod_{s\in I_0}(x-x_s)^{d\mu_s}\prod_{s\in I_1}(y-y_s)^{d\mu_s}} \kappa^{\otimes d}.
$$
Since $\tau_m$ does not vanish outside of the nodal curve $C_m=\varpi^{-1}(0)$, and vanishes to order $d(1-\mu_\Scal)$ on the component $C^1_m$,  we conclude that as a section of ${\cal O}(d\Lambda)$ on $\ol\Ccal_m$, $\tau_m$ is equal to $\Phi^{\otimes d}$ up to the multiplication by an invertible function on $\Dis$. It will be more convenient below to use the coordinates $(x,t)$ (even if those are only coordinates away from $x=0$) in which
$$\tau_m=\frac{\prod_{s\in I_1}(-x_s)^{d\mu_s}}{\prod_{s\in I_0}(x-x_s)^{d\mu_s}\prod_{s\in I_1}(x-tx_s)^{d\mu_s}}(dx)^{\otimes d}
$$
and where we used the notation $x_s:=1/y_s$ if $s\in I_1$.

\subsection{Other formulas for the volume and proof of Theorem~\ref{thm:kawa:formpart}}\label{sec:other formula}
As a direct consequence of the discussion in Section~\ref{sec:kawa:ext:def}, we get
\begin{Theorem}\label{thm:kawa:form}
Under the assumptions of Proposition~\ref{prop:extend:L}, let $\sigma$ be a global section of $\hat{\Lcal}^{\otimes d}$ over
$\ol{\Mod}_{0,n}$ and let us define $D_\sigma:=\frac1d{\rm div}(\sigma)$ where ${\rm div}(\sigma)$ is the divisor of $\sigma$. Then
$$
\int_{\Mod_{0,n}}\Omega_\mu^{N}= \frac1{(N+1)^N}(K_{\ol{\Mod}_{0,n}}+D_\mu)^{N}= D_\sigma^N.
$$
\end{Theorem}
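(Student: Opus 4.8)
The plan is to observe that the first equality is nothing but Theorem~\ref{theorem:main}, so that only the second equality $\frac{1}{(N+1)^N}(K_{\ol{\Mod}_{0,n}}+D_\mu)^N=D_\sigma^N$ requires an argument, and that this second equality is a purely cohomological consequence of the identifications already obtained in Propositions~\ref{prop:s-KE:metric:M0n} and~\ref{prop:extend:L}. Both sides will be exhibited as the top self-intersection number $c_1(\hat{\Lcal})^N$.

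First I would record the right-hand side. Since $\sigma$ is a global holomorphic section of the line bundle $\hat{\Lcal}^{\otimes d}$, its zero divisor $\mathrm{div}(\sigma)$ is effective and, by the Lelong-Poincar\'e formula, represents the class $c_1(\hat{\Lcal}^{\otimes d})=d\,c_1(\hat{\Lcal})$ in $H^{1,1}(\ol{\Mod}_{0,n},\R)$. Hence the $\Q$-divisor $D_\sigma=\frac1d\mathrm{div}(\sigma)$ represents $c_1(\hat{\Lcal})$, and since self-intersection numbers on the smooth projective variety $\ol{\Mod}_{0,n}$ depend only on the cohomology class, $D_\sigma^N=\frac1{d^N}\mathrm{div}(\sigma)^N=\frac1{d^N}c_1(\hat{\Lcal}^{\otimes d})^N=c_1(\hat{\Lcal})^N$.

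Next I would identify $c_1(\hat{\Lcal})$ with $\frac1{N+1}(K_{\ol{\Mod}_{0,n}}+D_\mu)$. By Proposition~\ref{prop:extend:L}, the extension by $0$ of $\Omega_\mu$ is a representative of $c_1(\hat{\Lcal})$, so that $c_1(\hat{\Lcal})=\{\Omega_\mu\}$; and by Proposition~\ref{prop:s-KE:metric:M0n} we have $(N+1)\{\Omega_\mu\}=c_1(K_{\ol{\Mod}_{0,n}}+D_\mu)$. Combining these yields $c_1(\hat{\Lcal})^N=\{\Omega_\mu\}^N=\frac1{(N+1)^N}(K_{\ol{\Mod}_{0,n}}+D_\mu)^N$, and chaining with the previous display gives $D_\sigma^N=\frac1{(N+1)^N}(K_{\ol{\Mod}_{0,n}}+D_\mu)^N$, which is the claimed identity.

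I do not expect a genuine obstacle here: all the analytic and geometric content has already been packaged into the two cited propositions, and the statement follows formally from the cohomological invariance of intersection numbers. The only point to handle with care is the $\Q$-divisor bookkeeping, namely dividing the honest integral class $c_1(\hat{\Lcal}^{\otimes d})$ by $d$ to pass to the $\Q$-class $c_1(\hat{\Lcal})$ carried by $D_\sigma$; once this normalization is tracked consistently, the three expressions coincide term by term.
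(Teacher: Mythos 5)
Your proposal is correct and follows essentially the same route as the paper: the paper's proof is precisely to note that, by Proposition~\ref{prop:extend:L}, the extension by $0$ of $\Omega_\mu$ represents $c_1(\hat\Lcal)$, so that $D_\sigma$ and $\frac1{N+1}\bigl(K_{\ol{\Mod}_{0,n}}+D_\mu\bigr)$ carry the same cohomology class, and the volume identity then follows from Theorem~\ref{theorem:main} (equivalently, from~\eqref{eq:int:volume:KE}) together with the cohomological invariance of top self-intersection numbers. Your handling of the $\Q$-divisor normalization $D_\sigma=\frac1d\,\mathrm{div}(\sigma)$ is exactly the intended bookkeeping.
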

%

In what follows, we will construct explicitly a holomorphic section $\sigma$ of $\hat{\Lcal}^{\otimes d}$ and determine the corresponding divisor $D_\sigma$. Our goal is to prove Theorem~\ref{thm:kawa:formpart}.

In~\cite{kaw}, Y. Kawamata constructs global sections of $\hat{\Lcal}^{\otimes d}$ over $\ol{\Mod}_{0,n}$. For our purpose, we present here a slight variation of those sections: define $J=(j_1,j'_1,\dots,j_d,j'_d)\in\N^{2d}$ by
$$j_i=s\ \ {\rm if}\ \ d\sum_{k=1}^{s-1}\mu_k< i\leq d\sum_{k=1}^s\mu_k,
$$
$$j'_i=s\ \ {\rm if}\ \ d\sum_{k=1}^{s-1}\mu_k< d+i\leq d\sum_{k=1}^s\mu_k
$$
for all $1\leq i\leq d$, where by convention $\sum_{k=1}^{0}\mu_k=0$. The only two important points in the definition of $J$ are that (1) each $s$ appears $d\mu_s$ times in $J$, and (2) for any $i$, $j_i\not=j'_i$.

For each $1\leq s<s'\leq n$, we define
$$\lambda(s,s')=\frac 1 d\#\bigl\{i,\;\, (j_i,j'_i)=(s,s')\bigr\}$$
that is the number of times the pair $(s,s')$ occurs as $(j_i,j'_i)$ divided by $d$. Alternatively, with our choice of $J$,
\begin{equation}\label{eq:lambda:def}
\lambda(s,s')=\left\{
\begin{array}{l}
0\ \ {\rm if}\ \sum_{k=s}^{s'}\mu_k\leq1\ {\rm or}\ \sum_{k=s+1}^{s'-1}\mu_k\geq1,\\
\min\bigl\{\mu_s,\mu_{s'},\sum_{k=s}^{s'}\mu_k-1,1-\sum_{k=s+1}^{s'-1}\mu_k\bigr\}\ \ {\rm otherwise}.
\end{array}
\right.
\end{equation}
Let $\{x_1,\dots,x_n\}$ be $n$ distinct points on $\CP^1$. For any pair $(j,j')$ of distinct elements of $\{1,\dots,n\}$, we denote by $\omega_{j,j'}$ the unique non-vanishing rational $1$-form on $\CP^1$ with simple poles at $x_j$ and $x_{j'}$, and satisfying ${\rm res}_{x_j}=1$, ${\rm res}_{x_{j'}}=-1$. If the points $x_j, x_{j'}$ are in $\CP^1\backslash\{\infty\}$ then
$$
\omega_{j,j'}=(x_j-x_{j'})\frac{dz}{(z-x_j)(z-x_{j'})}.
$$
Finally, let us define
$$\omega_J:=\prod_{i=1}^d\omega_{j_i,j'_i}\in\Gamma\Bigl(\CP^1,d\bigl(K_{\CP^1}+\sum_{s=1}^n\mu_s x_s\bigr)\Bigr).
$$
Remark that $\omega_J$ is invariant by the action of ${\rm PGL}(2,\C)$, thus it gives rise to  a well-defined non-vanishing section of $\Lcal^{\otimes d}$ on $\Mod_{0,n}$.
This section extends to the whole $\ol{\Mod}_{0,n}$ as a section $\sigma$ of $\hat{\Lcal}^{\otimes d}$.
We are now going to determine the support of its zero divisor $D_\sigma$, which must be contained in the boundary divisor of $\ol{\Mod}_{0,n}$, by using the above trivializations $\tau_m$.
Let us fix $\Scal=\{I_0,I_1\}$ as before. In the notation of Section~\ref{sec:kawa}, and using the coordinates $(x,t)$ for the universal family above a small disc $\Dis$ transverse to $D_\Scal$ at a generic point $m$, the section $\omega_J$ writes
$$\omega_J=\frac{\prod_{j_i,j'_i\in I_0}(x_{j_i}-x_{j'_i})\prod_{j_i\in I_0,j'_i\in I_1}(x_{j_i}-tx_{j'_i})\prod_{j_i\in I_1,j'_i\in I_0}(tx_{j_i}-x_{j'_i})\prod_{j_i,j'_i\in I_1}t(x_{j_i}-x_{j'_i})}{\prod_{s\in I_0}(x-x_s)^{d\mu_s}\prod_{s\in I_1}(x-tx_s)^{d\mu_s}}(dx)^{\otimes d}
$$
i.e.
$$
\omega_J=t^{\#\{i,\;\, j_i,j'_i\in I_1\}}f(t)\tau_m = t^{d\sum_{1\leq s < s'\leq n}\delta_\Scal(s,s')\la(s,s')}f(t)\tau_m
$$
where $f$ is an invertible function on $\Dis$ and $\delta_\Scal(s,s')=\left\{
\begin{array}{l}
1\ {\rm if}\ \{s,s'\}\subset I_1\\ 
0\ {\rm otherwise}
\end{array}
\right.
$.
As a consequence, we get
\begin{equation}\label{eq:c1L:div:sig}
\hat\Lcal\sim_\Q D_\sigma:=\frac1 d {\rm div}(\sigma)=\sum_\Scal\sum_{1\leq s<s'\leq n} \delta_\Scal(s,s')\lambda(s,s')\, D_\Scal.
\end{equation}
Notice that $\sigma$ and $D_\sig$ depend on the multi-indices $J$. By choosing other multi-indices $J$ satisfying conditions $(1)$ and $(2)$ above, we would obtain other divisors to which $\hat\Lcal$ is $\Q$-linearly equivalent.

\subsection*{Proof of Theorem~\ref{thm:kawa:formpart}}
\begin{proof}
Applying Theorem~\ref{thm:kawa:form} with $D_\sigma$ given by \eqref{eq:c1L:div:sig},  we see that Theorem~\ref{thm:kawa:formpart} is proved if the weights in $\mu$ are all rational. If not, one can approximate them by rational numbers in such a way that the numbers $\delta_\Scal(s,s')$ remain unchanged. From \eqref{eq:lambda:def} we see that $\lambda(s,s')$ depends continuously on $\mu$. Thus $D_\sig$ depends continuously on $\mu$.  On the other hand, from Theorem~\ref{theorem:main}, we know that the total volume of $\Mod_{0,n}$ with respect to $\Omega_\mu$ depends also continuously on $\mu$.  Thus by continuity with respect to $\mu$, Theorem~\ref{thm:kawa:formpart} is shown in full generality, that is for all $\mu$ satisfying the hypothesis of Theorem~\ref{theorem:main}.
\end{proof}

\subsection{Another look at Theorem~\ref{theorem:main}} As a final remark, we would like to show now that the 
point of view adopted in this section also provides an alternative way to find the expression of $\{\Omega_\mu\}$ obtained in the proof of the main theorem.

Here, as before, we have to assume the weights $\mu_s$ to be rational, multiply them by a positive integer $d$ in such a way that the numbers $d\mu_s$ are integers, and consider $\hat\Lcal^{\otimes d}$. The general case of real weights then follows again by continuity arguments. However, as the reader can easily check, the computations can be made directly as if $\hat\Lcal$ was actually a line bundle.

In Section~\ref{sec:other formula} we exhibited sections of $\hat\Lcal$ whose zero divisor provides representatives of $c_1(\hat\Lcal)$ which is equal to $\{\Omega_\mu\}$. As $\hat\Lcal$ is the pushforward of a line bundle on the universal curve, it is also natural to use the Grothendieck-Riemann-Roch formula to compute $c_1(\hat\Lcal)$.

Again, we refer to~\cite{ACG11} or~\cite{zvonkine} for the basic material. Let us define $\Delta$ as the codimension 2 subvariety of ${\ol\UC}_{0,n}$ consisting of the nodes of the singular fibers of the projection $\pi: \ol{\UC}_{0,n} \ra \ol{\Mod}_{0,n}$ and $K=K_{{\ol\UC}_{0,n}/\ol{\Mod}_{0,n}}(\sum_s\Gamma_s)$. The Todd class of $\pi$ is given by
$${\rm td}(\pi)=1-\frac12\Bigl(K-\sum_s\Gamma_s\Bigr)+\frac1{12}\Bigl(K^2+\sum_s\Gamma_s^2+\Delta\Bigr)+\dots
$$
and recall that $\hat\Lcal=\pi_*\Ocal(\Lambda)$ where
$$
\Lambda=K_{{\ol\UC}_{0,n}/\ol{\Mod}_{0,n}}+\sum_s \mu_s\,\Gamma_s-\sum_\Scal (1-\mu_\Scal) F^1_\Scal.
$$
Notice that $R^1\pi_*\Ocal(\Lambda)=0$ as $\Ocal(\Lambda)$ is trivial along the fibers of $\pi$, hence by the Grothendieck-Riemann-Roch formula, a representative of $c_1(\hat\Lcal)$ is
\begin{eqnarray}
\frac12\pi_*\left\{\Biggl(K+\sum_s(\mu_s-1)\Gamma_s+\sum_\Scal(\mu_\Scal-1)F^1_\Scal\Biggr)\Biggl(\sum_s\mu_s\Gamma_s+\sum_\Scal(\mu_\Scal-1)F^1_\Scal\Biggr)\right\}\nonumber
\end{eqnarray}
because $\frac1{12}\pi_*\Bigl(K^2+\sum_s\Gamma_s^2+\Delta\Bigr)$ represents the first Chern class of the Hodge bundle, which is trivial on $\ol{\Mod}_{0,n}$. Now, it is well known that for any $s$ and any $s'\not=s$,
$$K\cdot\Gamma_s=0\ ,\ \ \Gamma_s\cdot\Gamma_{s'}=0\ ,\ \ \pi_*\bigl(\Gamma_s^2\bigr)=-\psi_s$$
and straightforward computations show that
$$
$$
$$
\pi_*(K\cdot F^1_\Scal)=(|I_1|-1)\,D_\Scal\ ,\ \ \pi_*(F^1_\Scal\cdot F^1_{\Scal'})=\left\{\begin{array}{cl}
0 & {\rm\ if\ \Scal\not=\Scal'}\\
-D_\Scal & {\rm\ if\ \Scal=\Scal'}
\end{array}\right.
\ \ {\rm and}\ \ 
\pi_*(F^1_\Scal\cdot\Gamma_s)=\left\{\begin{array}{cl}
0 & {\rm\ if\ }s\not\in I_1\\
D_\Scal & {\rm\ if\ }s\in I_1
\end{array}\right.
$$
for any $s$, $\Scal$ and $\Scal'$. Therefore,
$$c_1(\hat\Lcal)=\frac12\left(-\sum_s\mu_s(\mu_s-1)\psi_s+\sum_\Scal\mu_\Scal(\mu_\Scal-1)D_\Scal\right).
$$
Finally, by a slight variation of the computation in Section~\ref{sec:proof:main} we obtain
$$\begin{array}{rcl}
\displaystyle(n-2)\sum_{i}\mu_i(2-\mu_i)\,\psi_i&=&\displaystyle\sum_{i,j,k}\mu_i(\mu_j+\mu_k)\,\psi_i\\
&\sim&\displaystyle\sum_{i,j,k}\mu_i(\mu_j+\mu_k)\,\delta_{i|jk}\\
&=&\displaystyle\sum_\Scal\Bigl(\bigl(|I_1|-1\bigr)\sum_{j\in I_1}\mu_j\sum_{i\in I_0}\mu_i+\bigl(|I_0|-1\bigr)\sum_{j\in I_0}\mu_j\sum_{i\in I_1}\mu_i\Bigr)D_\Scal\\
&=&(n-2)\displaystyle\sum_\Scal\mu_\Scal(2-\mu_\Scal)D_\Scal\\
\end{array}
$$
which implies
$$c_1(\hat\Lcal)=\frac12\left(-\sum_s\mu_s\psi_s+\sum_\Scal\mu_\Scal D_\Scal\right)$$
as expected.
\appendix
\section{Intersection theory on $\ol{\Mod}_{0,n}$}\label{appendix:intersection}
In this section we describe an algorithm to compute the intersection numbers of vital divisors in $\ol{\Mod}_{0,n}$. This algorithm is well known to experts in the field and can be found in~\cite{Kontsevich-Manin}. We include it here only for the sake of completeness. We are grateful to D.~Zvonkine for having explained it to us.

Intersections of vital divisors in $\ol{\Mod}_{0,n}$ will produce formal sums of trees whose vertices are labelled by subsets in a partition of $\{1,\dots,n\}$. At every vertex, the sum of the cardinal of the corresponding subset and the number of edges containing it must be at least $3$.   Such a tree corresponds to a stratum of $\ol{\Mod}_{0,n}$.  Note that we allow $\varnothing$ to be part of a partition. A vital divisor $D_\Scal$, where $\Scal=\{I_0,I_1\}$ is a partition of $\{1,\dots,n\}$ such that $\min\{|I_0|,|I_1|\}\ge 2$, corresponds to a tree with two vertices labelled by $I_0$ and $I_1$.

Here below we will give the rule to compute the intersection of a divisor $D_{\Scal}$ with a tree $\T$  as above.  Recursively, this allows us to compute any product $D_{\Scal_1}\cdot\dots\cdot D_{\Scal_{n-3}}$.  We first color the vertices of $\T$ with respect to the partition $\{I_0,I_1\}$ as follows: the vertices labelled by subsets contained in $I_0$ are given the red color, those labelled by subsets contained in $I_1$ are given the blue color. The vertices corresponding to subsets which are not contained in $I_0$ nor in $I_1$ are given the black color. Finally, the vertices corresponding to the empty set are given the white color. We have three cases:
\begin{itemize}
\item[$\bullet$] {\bf Case 1}: there is more than one black vertex. In this case the intersection is empty, we get $0$.

\item[$\bullet$] {\bf Case 2:} there is exactly  one black vertex. If there is an edge in $\T$ which connects a red vertex and a blue one then we get $0$. Otherwise the black vertex separates the red vertices from the blue ones. We subdivide the subset corresponding to the black vertex  into two subsets: one is contained in $I_0$, the other in $I_1$. We then replace this vertex of $\T$ by an edge whose ends are labelled by the two subsets above. We color the new vertices using the same rule. There is a unique configuration such that the new edge separates the red vertices from the blue ones. The intersection is then given by this new tree.

\item[$\bullet$] {\bf Case 3:} there are no black vertices. We will  say that a vertex or an edge  of $\T$ separates the red vertices from the blues ones if it is contained in any path  joining a red vertex to a blue one. We have several subcases:

\begin{itemize}
\item[(a)] There are no edges and no vertices  that separate the red vertices from the blue ones. In this case the intersection is $0$.

\item[(b)] There are no edges that separate the red vertices from the blue ones, but there is a vertex $A$ that satisfies this property. Note that $A$ is then unique.
We first notice that all the leaves of $\T$ must be either red or blue. Thus we can subdivide the set of edges incident to $A$ into two subsets: $E'$ is the set of edges that are contained in some paths joining $A$ to a red leaf, $E''$ is the set of edges that are contained in some paths joining $A$ to a blue leaf. That $\{E',E''\}$ is a partition of the set of edges incident to $A$ is a consequence of the hypothesis that $A$ separates the red vertices from the blue ones.

We form a new tree by splitting $A$ into two vertices $A',A''$ connected by an edge, where $A'$ is attached to all the edges in $E'$, and $A''$ is attached to all the edges in $E''$. We associate to $A'$ the subset $A\cap I_0$, and to $A''$ the subset $A\cap I_1$. In more concrete terms, if $A$ is red then $A'=A, A''=\varnothing$, if $A$ is blue then $A'=\varnothing, A''=A$, if $A=\varnothing$ then $A'=A''=\varnothing$. This new tree is the intersection of $D_\Scal$ and $\T$. Notice that it is necessary stable because otherwise, there would exist an edge separating the red vertices from the blue ones.

\item[(c)] There is an edge $e$ that separates the red vertices from the blue ones. In this case this edge must be unique. Let $A$ and $B$  denote the ends of $e$. By a slight abuse of notation we will also denote by $A$ and $B$ the corresponding subsets of $\{1,\dots,n\}$. Note  that $A$ and $B$ can be empty.

Let $\hat{A}$ be the union of the indices contained in $A$ and the edges incident to $A$. We pick a pair $\{a_1,a_2\}$ in $\hat{A}$ such that $e\not\in \{a_1,a_2\}$. Consider all the partitions of $\hat{A}$ into two subsets $\{\hat{A}_1,\hat{A}_2\}$ such that $e\in \hat{A}_1$, $\{a_1,a_2\} \subset \hat{A}_2$, and $\min\{|\hat{A}_1|,|\hat{A}_2|\}\geq 2$.  For any such partition, we remove the vertex $A$ from $\T$ and construct a new  tree from $\T$ as follows: form two new vertices $A_1$ and $A_2$, attach $A_i$ to all the edges in $\hat{A}_i$ and add a new edge connecting $A_1$ and $A_2$. The new vertex $A_i$ is associated with the set of indices in $\{1,\dots,n\}\cap\hat{A}_i$. Let $\Sig_A$ denote the formal sum of all the trees obtained this way.

We apply the same  to $B$, and let $\Sig_B$ denote the corresponding formal sum. The intersection of $D_\Scal$ with $\T$ is then equal to $-(\Sig_A+\Sig_B)$.
\end{itemize}
\end{itemize}
The intersection number $D_{\Scal_1}\cdot\dots\cdot D_{\Scal_{n-3}}$ is then the sum of all the coefficients of the trees in the final formal sum obtained from this algorithm.

\medskip

Using this algorithm, we can compute the intersection numbers of vital divisors in $\ol{\Mod}_{0,5}$ and $\ol{\Mod}_{0,6}$. As $\Scal=\{I_0,I_1\}$ is of course determined by either $I_0$ or $I_1$, we denote below $D_\Scal$ by $D_{I_0}$ or $D_{I_1}$.

\subsection*{Case $\ol{\Mod}_{0,5}$}
\[
D_{ij}\cdot D_{ij}=-1, \quad D_{ij}\cdot D_{jk}=0, \quad D_{ij}\cdot D_{k\ell}=1.
\]
\subsection*{Case $\ol{\Mod}_{0,6}$}
Recall that $D_{I}\cdot D_{J}=0$ if  neither of $J$ and $J^c$ is  contained in $I$ or in $I^c$.  The intersections which do not vanish due to this simple rule are recorded here below.
\[\begin{array}{l}
D_{ij}\cdot D_{ij}\cdot D_{ij}=1, \quad D_{ij}\cdot D_{ij}\cdot D_{ijk}=0, \quad D_{ij}\cdot D_{ij}\cdot D_{k\ell}=-1,\\
D_{ij}\cdot D_{ijk}\cdot D_{ijk}=-1, \quad D_{ij}\cdot D_{ijk} \cdot D_{j'k'}=1,\\
D_{ijk}\cdot D_{ijk} \cdot D_{ijk}=2, \\
D_{ij}\cdot D_{k\ell} \cdot  D_{k'\ell'}=1.\\
\end{array}
\]

\section{Computation of the volume in $\Mcal_{0,5}$}

Here we compute the volume of $\Mcal_{0,5}$ with respect to $\Omega_\mu$ using the results of Section~\ref{sec:mu:rat:ext:sect}.
We may assume that $1> \mu_1\geq \mu_2\geq \mu_3\geq \mu_4\geq \mu_5>0$. Note that in any case, $\mu_2+\mu_4\leq 1$ since $\sum\mu_s=2$. As a consequence, only the following can happen:
\begin{itemize}
\item[$\bullet$] $\mu_2+\mu_3\leq1$ and
\begin{itemize}
\item[.] $\mu_1+\mu_5\geq1$:
$$D_\sigma=(1-\mu_1)D_{13}+(1-\mu_1)D_{14}+(1-\mu_1) D_{25}\ \ {\rm and}$$
$$\int_{\Mod_{0,5}}\Omega_\mu^{2}=(1-\mu_1)^2
$$
\item[.] $\mu_1+\mu_4\geq1$, $\mu_1+\mu_5\leq1$:
$$D_\sigma=(1-\mu_1)D_{13}+\mu_5D_{14}+(1-\mu_1-\mu_5)D_{24}+\mu_5 D_{25}\ \ {\rm and}$$
$$\int_{\Mod_{0,5}}\Omega_\mu^{2}=(1-\mu_1)^2-(1-\mu_1-\mu_5)^2
$$
\item[.] $\mu_1+\mu_3\geq1$, $\mu_1+\mu_4\leq1$:
$$D_\sigma=(1-\mu_1)D_{13}+(1-\mu_2-\mu_3)D_{14}+(1-\mu_1-\mu_5)D_{24}+\mu_5 D_{25}\ \ {\rm and}$$
$$\int_{\Mod_{0,5}}\Omega_\mu^{2}=(1-\mu_1)^2-(1-\mu_1-\mu_4)^2-(1-\mu_1-\mu_5)^2
$$
\item[.] $\mu_1+\mu_2\geq1$, $\mu_1+\mu_3\leq1$:
$$D_\sigma=\mu_3D_{13}+(1-\mu_2-\mu_3)D_{14}+(1-\mu_1-\mu_5)D_{24}+\mu_5 D_{25}\ \ {\rm and}$$
$$\int_{\Mod_{0,5}}\Omega_\mu^{2}=2\,\mu_3\,\mu_5-(1-\mu_1-\mu_4)^2-(1-\mu_2-\mu_4)^2
$$
\item[.] $\mu_1+\mu_2\leq1$:
$$D_\sigma=(1-\mu_4-\mu_5)D_{13}+(1-\mu_2-\mu_3)D_{14}+(1-\mu_1-\mu_5)D_{24}+(1-\mu_3-\mu_4)D_{25}+(1-\mu_1-\mu_2)D_{35}$$
$${\rm  and}\ \ \int_{\Mod_{0,5}}\Omega_\mu^{2}=2\sum_{i=1}^5(1-\mu_{i-1}-\mu_i)(1-\mu_i-\mu_{i+1})-\sum_{i=1}^5(1-\mu_i-\mu_{i+1})^2
$$
\end{itemize}
\item[$\bullet$] $\mu_2+\mu_3\geq1$ and $\mu_1+\mu_4\leq1$:
$$D_\sigma=(\mu_4+\mu_5)D_{13}+\mu_4D_{24}+\mu_5 D_{25}\ \ {\rm and}$$
$$\int_{\Mod_{0,5}}\Omega_\mu^{2}=2\,\mu_4\,\mu_5.
$$
\end{itemize}
All the formulas are obtained as a straightforward application of Theorem~\ref{thm:kawa:form}. However, one can prove after some more (tedious) computations that if $\mu_2+\mu_3\leq 1$ and $\mu_1+\mu_{s-1}\geq 1$, $\mu_1+\mu_s\leq 1$ for some $2\leq s\leq 6$ (which happens for all but the last exceptional case) then
$$\int_{\Mod_{0,5}}\Omega_\mu^{2}=(1-\mu_1)^2-\sum_{i=s}^5(1-\mu_1-\mu_i)^2.
$$

\section{An example in $\Mod_{0,6}$}
The fact that $\Omega_\mu$ is a representative of the first Chern class of the Kawamata line bundle $\hat{\Lcal}$ can be exploited to simplify the evaluation of $\int_{\Mod_{0,n}}\Omega_\mu^{n-3}$ in certain cases, especially when the weight vector $\mu$ has some symmetry. To illustrate this observation, let us consider the family of weights $\mu=(\alpha,\alpha,\alpha,\beta,\beta,\beta)$, with $0<\beta\leq\alpha$ and $\alpha+\beta=2/3$. Assume that $\alpha$ and $\beta$ are both rational, we can find $d\in \N^*$ such that $d\alpha\in 2\N$ and $d\beta\in 2\N$. Define a section $\sigma$ of the Kawamata line bundle by
$$
\sigma =\frac{(x_1-x_2)^{d\frac{\alpha}{2}}(x_2-x_3)^{d\frac{\alpha}{2}}(x_3-x_1)^{d\frac{\alpha}{2}}(x_4-x_5)^{d\frac{\beta}{2}} (x_5-x_6)^{d\frac{\beta}{2}}(x_6-x_4)^{d\frac{\beta}{2}}}{(z-x_1)^{d\alpha}(z-x_2)^{d\alpha}(z-x_3)^{d\alpha}(z-x_4)^{d\beta}(z-x_5)^{d\beta}(z-x_6)^{d\beta}}dz^{\otimes d}
$$
We will use the following equality (which is a consequence of Theorem~\ref{thm:kawa:form}) $\int_{\Mod_{0,6}}\Omega_\mu^3=\left(\frac{\mathrm{div}(\sigma)}{d}\right)^3$ to compute the volume of $\Mod_{0,6}$ with respect to $\Omega_\mu$.

In what follows, for any subset $I\subset \{1,\dots,6\}$ such that $2\leq |I| \leq 4$, $D_I$ is the boundary divisor of $\ol{\Mod}_{0,6}$ corresponding to the partition $\{I,I^c\}$. In particular, any boundary divisor of $\ol{\Mod}_{0,6}$ can be written as $D_I$ with $|I| \leq 3$. Set
$$
A_1 =  D_{123}, \quad  A_2 = \sum_{1\leq i \leq 3} \sum_{4\leq j< k \leq 6} D_{ijk}, \quad B = \sum_{1\leq i <j \leq 3} D_{ij},\quad C = \sum_{4\leq i < j \leq 6} D_{ij}.
$$
Applying the algorithm described in Appendix~\ref{appendix:intersection}, we get the following
$$\left\{
\begin{array}{l}
A_1^3=2,  \quad A_2^3=18, \quad B^3=3, \quad C^3= 3,\\
A_1A_2=0, \\
A_1^2B=A_1^2C=-3, \quad A_1B^2=A_1C^2=0,\\
A_2^2B=A_2^2C=-9, \quad A_2B^2=A_2C^2=0,\\
B^2C=BC^2=-9,\\
A_1BC=A_2BC=9.
\end{array}
\right.
$$
We have two cases
\begin{itemize}
 \item[$\bullet$] Case I: $0<\beta\leq \frac{1}{6} \Leftrightarrow \alpha \geq \frac{1}{2}$.  We have $  \frac{\mathrm{div}(\sigma)}{d}=\frac{3\beta}{2}A_1 +\frac{\beta}{2}A_2+\frac{3\beta}{2}B+\frac{\beta}{2}C$.  Therefore
 $$
 \left(\frac{\mathrm{div}(\sigma)}{d}\right)^3=(3A_1+A_2+3B+C)^3\left(\frac{\beta}{2}\right)^3=48\times\frac{\beta^3}{8}=6\beta^3.
 $$

 \item[$\bullet$] Case II: $\frac{1}{6} \leq \beta \leq \frac{1}{3} \Leftrightarrow \frac{1}{3} \leq \alpha \leq \frac{1}{2}$.  We have $\frac{\mathrm{div}(\sigma)}{d}=\frac{3\beta}{2}A_1 +\frac{\beta}{2}A_2+\frac{\alpha}{2}B+\frac{\beta}{2}C $.  It follows
 $$
 \left(\frac{\mathrm{div}(\sigma)}{d}\right)^3 =  \frac{3}{8}((\alpha-3\beta)^3+16\beta^3) =  6\beta^3- 3(2\beta-\frac{1}{3})^3.
 $$
 \end{itemize}
 To sum up, we have
 \begin{equation*}
  \int_{\Mod_{0,6}} \Omega_\mu^3= 6\beta^3-3(\max\{2\beta-\frac{1}{3}, 0\})^3, \text{ for all } \beta \in (0,\frac{1}{3}].
 \end{equation*}

\end{document}